\documentclass[11pt]{amsart}
\usepackage{graphicx} %
\usepackage{amsfonts, amsmath, amsthm, amssymb}
\usepackage{mathtools}
\usepackage[dvipsnames]{xcolor}
\usepackage[
    pdftex=true,
    colorlinks=true,
    linkcolor=RoyalPurple,   %
    citecolor=Blue,     %
    urlcolor=Violet     %
]{hyperref}
\usepackage[margin=1.25in]{geometry}
\pagestyle{plain}
\usepackage{float, comment}
\usepackage{algorithm}
\usepackage{algorithmic}

\usepackage{cleveref}
\usepackage[most]{tcolorbox}
\usepackage[utf8]{inputenc}
\usepackage[T1]{fontenc}
\usepackage{empheq}
\usepackage{amsfonts}
\usepackage{tikz-cd}
\usepackage{caption}
\usepackage{subcaption}
\usepackage{url}

\newtcbtheorem{defn}{Definition}%
{	colframe=blue!30!gray, theorem name, fonttitle=\bfseries,
	colback=blue!5, top=1mm,bottom=1mm, boxrule=0.5pt}{Example}

\newtcbtheorem[use counter=theorem, number within=section]{THM}{Theorem}%
{	colframe=blue!30!gray, fonttitle=\bfseries,
	colback=blue!5,  fontupper=\itshape, top=1mm,bottom=1mm, boxrule=0.5pt}{Theorem}

\newtcbtheorem[use counter=theorem, number within=section]{EX}{Example}%
{	colframe=blue!30!gray, fonttitle=\bfseries,
	colback=blue!5,  fontupper=\itshape, top=1mm,bottom=1mm, boxrule=0.5pt}{Theorem}

\usepackage{varwidth}
\newtcbox{\mymath}[1][]{%
	nobeforeafter, math upper, tcbox raise base,
	enhanced, colframe=blue!30!black,
	colback=blue!5, boxrule=0.5pt, top=1mm,bottom=1mm,
	#1}
 \newtcbox{\mybox}{colback=blue!5,
	colframe=blue!30!black, center, enhanced, varwidth upper}

\usepackage{biblatex}
\addbibresource{references.bib}

\theoremstyle{plain}
\newtheorem{theorem}{Theorem}[section]
\newtheorem{lemma}[theorem]{Lemma}
\newtheorem{proposition}[theorem]{Proposition}
\newtheorem{corollary}[theorem]{Corollary}

\theoremstyle{definition}
\newtheorem{definition}[theorem]{Definition}
\newtheorem{assumption}[theorem]{Assumption}
\newtheorem{example}{Illustration}
\newtheorem{fact}{Fact}

\Crefname{example}{Illustration}{Illustrations} %
\crefname{example}{Illustration}{Illustrations} %
\def\T{{\top}}

\def\Tr{{\operatorname{Trace}}}
\def\Span{{\operatorname{span}}}

\def\argmin{{\operatorname{argmin}}}

\def\1{{\mathbbm{1}}}
\def\E{{\mathbb{E}}}

\def\phi{{\varphi}}

\def\R{{\mathbb R}}
\def\Card{{\mathrm{Card}}}
\def\gcd{{\mathrm{gcd}}}
\def\F{{F}}

\def\Pol{{\rm Pol}}
\def\R{{\mathbb{R}}}
\def\fS{{\mathfrak{S}}}
\def\a{{\alpha}}
\def\N{{\mathbb{N}}}
\def\rank{\operatorname{rank}}
\def\sgn{\operatorname {sgn}}
\def\a{\alpha}
\def\b{\beta}

\def\d{\delta}
\def\e{\epsilon}

\def\l{\lambda}

\def\s{\sigma}
\def\t{\tau}
\def\Z{{\mathbb{Z}}}
\def\cH{{\mathcal{H}}}
\def\cM{{\mathcal{M}}}
\def\cV{{\mathcal{V}}}
\def\cG{{\mathcal{G}}}
\def\GL{{\rm GL}}
\def\per{{\rm per}}
\def\proj{{\rm proj}}
\def\id{{\rm id}}
\DeclarePairedDelimiter{\dotp}{\langle}{\rangle}
\def\PP{{\mathbb{P}}}
\newcommand{\EE}{\mathbb{E}}
\newcommand{\cF}{\mathcal{F}}
\def\cE{{\mathcal{E}}}

\floatname{algorithm}{Procedure}

\setlength\parindent{0pt}
\setlength{\parskip}{1em}

\usepackage[draft]{fixme}
\fxsetup{theme=color, mode=multiuser, noinline}
\FXRegisterAuthor{rt}{RT}{\color{red}Rekha}
\FXRegisterAuthor{jk}{JK}{\color{blue}Jack}
\FXRegisterAuthor{dd}{DD}{\color{teal}Dima}
\FXRegisterAuthor{md}{MD}{\color{magenta}Mateo}

\title{Invariant kernels: rank stabilization and generalization across dimensions}

\author{Mateo D\'{i}az}
\address{Department of Applied Mathematics and Statistics,
Mathematical Institute for Data Science, 
Johns Hopkins University, Baltimore, MD; \href{https://mateodd25.github.io/}{\texttt{mateodd25.github.io}}.}
\author{Dmitriy Drusvyatskiy}
\address{Department of Mathematics, U. Washington,
	Seattle, WA; \href{https://sites.google.com/uw.edu/ddrusv/home}{\texttt{www.math.washington.edu/$\sim$ddrusv}}. Research supported by NSF DMS-2306322, NSF CCF 1740551, AFOSR FA9550-24-1-0092 awards.}
\author{
Jack Kendrick}
\address{Department of Mathematics, U. Washington,
	Seattle, WA; \href{mailto:jackgk@uw.edu}{\texttt{jackgk@uw.edu}}.} 

\author{Rekha R. Thomas}
\address{Department of Mathematics, U. Washington,
	Seattle, WA;	\href{https://sites.math.washington.edu/~thomas/}{\texttt{sites.math.washington.edu/$\sim$thomas/}}.}

\begin{document}

\begin{abstract}
Symmetry arises often when learning from high dimensional data.  For example, data sets consisting of point clouds, graphs, and unordered sets appear routinely in contemporary applications, and exhibit rich underlying symmetries. 
Understanding the benefits of symmetry on the statistical and numerical efficiency of learning algorithms is an active area of research. In this work, we show that symmetry has a pronounced impact on the {\em rank of kernel matrices}. Specifically, we compute the rank of a polynomial kernel of fixed degree that is invariant 
under various groups acting independently on its two arguments. In concrete circumstances, including the three aforementioned examples, symmetry dramatically decreases the rank making it {\em independent of the data dimension}. In such settings, we show that a simple regression procedure is minimax optimal for estimating an invariant polynomial from finitely many samples drawn across different dimensions.
We complete the paper with numerical experiments that illustrate our findings.
\end{abstract}

\maketitle

\section{Introduction}
Contemporary applications in the domain sciences and engineering often involve data sets that are invariant under some problem-specific group action. For example, graph properties are invariant under relabeling of vertices, point clouds are invariant under orthogonal transformations, unordered sets are invariant under permutations, and so forth. 
Estimation and inference with such data sets can benefit from taking invariance into account. %
In practice, one often explicitly hard-codes invariances into the parametric description/architecture of the function class. High-impact examples of invariant architectures include DeepSets \cite{zaheer2017deep} for sets, Convolutional Neural Networks for images \cite{krizhevsky2012imagenet}, PointNet \cite{qi2017pointnet,qi2017pointnet++} and tensor field Neural Networks \cite{thomas2018tensor} for point clouds, and Graph Neural Networks \cite{scarselli2008graph} for graphs. A key question in this line of work therefore is to understand how group invariance of the data benefits both computation and sample efficiency of learning. Seeking to answer this question, a number of recent papers \cite{tahmasebi2023exact,bietti2021sample,mei2021learning} have focused on kernel methods, showing that group invariance improves the standard sample complexity bounds by effectively scaling the dimension of the problem's data by the size of the group. Our work is largely complementary to this effort. We show that group invariance can significantly decrease the rank of polynomial kernels, which can be beneficial both for computation and inference.

Setting the stage, a {\em kernel} $K\colon V\times V\to\R$ {\em on a set $V$} is a symmetric bivariate function. Throughout, we will assume that $V$ is a finite-dimensional vector space; e.g. $V=\mathbb{R}^d$ or $V=\mathbb{R}^{d\times d}$.
The primary use of kernels in applications is to measure pairwise similarity between points 
$x_1,\ldots, x_n\in V$ via the matrix  $\mathbf{K}=[K(x_i,x_j)]_{i,j=1,\ldots,n}$.
A kernel $K$ admits a {\em rank-$r$ factorization} if it can be written as 
\begin{equation}\label{eqn:basic_kern_intro}
K(x,x')=\langle \varphi(x),\psi(x')\rangle\qquad \forall x,x'\in V,
\end{equation}
for some maps 
$\varphi,\psi\colon V\to\R^r$.
The {\em rank} of $K$, denoted $\rank K$, is the minimal such $r$.

 Low-rank kernels are  desirable both for computational and statistical reasons. Indeed, having a low rank-factorization ${\bf K}=AB^\top$ enables cheap matrix vector products and therefore faster linear algebraic computations. More generally, state-of-the-art kernel system solvers are based on cheaply forming low-rank approximations of kernel matrices and using them as preconditioners; e.g. RPCholesky \cite{chen2022randomly, diaz2023robust}, random Fourier Features \cite{avron2017faster,cutajar2016preconditioning}, EigenPro \cite{ma2017diving,ma2019kernel}, and Falcon \cite{falkonlibrary2020,falkonhopt2022}. From a statistical perspective, the classical generalization bound for the ordinary least squares estimator for linear regression $y=\langle \varphi(x),\beta^* \rangle+\varepsilon$ scales as $O(\sigma^2\rank(K)/n)$ (e.g. \cite[Exercise 13.2]{wainwright2019high}); therefore $\rank K$ serves as the intrinsic dimension of the problem.

In this work, we focus on polynomial kernels of a fixed degree $m$. By this we mean that $K(x,y)$ is a polynomial of degree $m$ in each of the arguments $x$ and $y$. We emphasize that we will regard $m$ as being a fixed constant throughout. The reader should think of $m$ as being relatively small while the dimension of the data $d$ and the number of data points $n$ are large.
A baseline example to keep in mind is the standard polynomial kernel $K(x,y)=(1+x^{\top} y)^m$ on $\R^d$ which can be realized by the feature maps $\varphi=\psi$ that evaluate all monomials $x^\a$ with $|\a|\leq m.$ %
In particular, $K$ has rank  $\sum_{j=0}^m{d+j-1\choose j}\asymp d^m$ (ignoring constants in $m$). %

In this work, we investigate the impact of symmetry on the kernel rank. Namely, given a group $G$ acting on $V$, we say that $K$ is {\em $G$-invariant} if the following equality holds:%
\footnote{$G-$invariance should be contrasted with the much weaker property of $G-$translation invariance, which stipulates that \eqref{eqn:g_invar} holds only with $g=g'.$ In contrast to $G-$invariant kernels, the rank of $G-$translation invariant kernels is usually high.}
\begin{equation}\label{eqn:g_invar}
  K(g x,g' x')=K(x,x')\qquad \forall x,x'\in V, ~g,g'\in G.  
\end{equation}
In particular, if $G$ is a finite group then any kernel induces a $G$-invariant kernel by averaging; see Section~\ref{sec:gi_sym} for details.
We investigate the rank of $G$-invariant kernels of degree $m$ on a $d$-dimensional vector space $V$, which should always be compared to the rank  $d^m$ of the %
standard polynomial kernel. Our contributions are as follows:
\mybox{\it
We will see that in a number of important settings the rank of $G$-invariant kernels is independent of the ambient dimension $d$ and derive consequences for sample efficiency of learning $G$-invariant polynomials when data is sampled across different dimensions. 
}

\subsection{Summary of results}
Much of our computations stem from the following elementary observation. The rank of any $G$-invariant degree $m$ kernel $K$ on a vector space $V$ can be upper-bounded as
\begin{empheq}[box=\mymath]{equation}
\rank K\leq \dim(\R[V]_m^G)\label{eqn:rank_est_basis0}
\end{empheq}
where $\R[V]_m$ denotes the vector space of polynomials on $V$ of degree at most $m$ and $\R[V]_m^G$ denotes the subspace of polynomials $\R[V]_m$ that are $G$-invariant.
Thus bounding the rank of $K$ is reduced to estimating the dimension of the invariant space $\R[V]_m^G$---a well-studied object, e.g., in combinatorics \cite{stanley1979invariants} and representation theory \cite{SerreBook}. 

We next describe some concrete examples where group invariance yields a rank reduction. The explicit bounds rely on so-called integer partitions. A {\em partition of an integer $j$}, denoted $\a\vdash j$, is a sequence $\a_1\geq \a_2\geq \ldots$ of strictly positive integers (called {\em parts}) summing to $j$. The symbol $p(j)$ will denote the number of integer partitions of $j$. %

\paragraph{\bf Permutation Invariant Functions}
As our very first example, we focus on functions on $V=\R^d$ that are invariant under coordinate permutations by the action of the symmetric group $\fS_d$. Common examples are coordinate sums, products, and functions of symmetric polynomials. It follows from well-known results---see, for example, \cite{fulton-harris}---that the dimension $\dim(\R[V]_m^{\fS_d}$ of the space of $\fS_d$-invariant polynomials is given by a quantity that only depends on the degree $m$ of the polynomials and not on the dimension $d$ of $V$. We record this observation in the following example, and provide a short argument in the text.

\begin{EX}[label={thm:Permutations}]{(Permutations)}{} 
    In the setting $d\geq m$, the quantity $\dim(\R[V]_m^{\fS_d})$ is independent of $\dim V$ and satisfies:
     \begin{equation}\label{eqn:rank_permut}
     \dim(\R[V]_m^{\fS_d}) = \sum_{j=0}^m p(j).
      \end{equation}
      \end{EX}

We stress that the bound \eqref{eqn:rank_permut} depends only on the degree $m$ and is at most $\exp(\pi\sqrt{2m/3})$ by the Hardy-Ramanujan asymptotic formula \cite{hardy-ramanujan}. Concretely, for $m=1,\ldots, 10$, we may compute $\sum_{j=0}^m p(j)$ directly yielding the bounds $2,4,7,12,19,30,45,67,97,139$, respectively.

\paragraph{\bf Set-permutation Invariant Functions.}
The next example considers invariances that are typically exhibited when each data point is itself a set of vectors. Such problems appear routinely in practice; see for example the influential work \cite{zaheer2017deep}. Specifically, let $S = \{x_1, \ldots, x_d\}$ be a set of $d$ points $x_i\in \R^k$. In applications, $x_i$ may denote the rows of a table $S$; as such, the number of rows $d$ is large, while each row $x_i$ lies in a low-dimensional space $\R^k$.
Clearly, $S$ can be represented by a matrix $X\in \R^{d\times k}$ having $x_i$ as its rows. For any permutation matrix $P\in \R^{d\times d}$, the matrices $PX$ and $X$ have the same rows in a different order and therefore, the two matrices represent the same set. Thus we will be interested in kernels on $V=\R^{d\times k}$ that are invariant under the action of $\fS_d$ by permuting rows. Existing results, such as those in \cite{fleischmann-invariantTheory, smith-InvariantTheory, weyl1946classical}, imply that $\dim(\R[V]_m^{\fS_d})$ is bounded by a quantity independent of the dimension. We provide a simple combinatorial argument to explicitly calculate the value of this dimension, summarized in the following example. %

\begin{EX}[label={thm:Set-permutations}]{(Set-permutations)}{} \label{thm:set-permutations}
When $d\geq m$, the quantity $\dim(\R[V]_m^{\fS_d})$ is independent of $\dim V$ and satisfies: 
    \begin{align*}
        \dim(\R[V]_m^{\fS_d})  = \sum_{j=0}^m\sum_{\a\vdash j}\prod_{i=1}^j{{i+k-1\choose i} + \mu_i(\a)-1\choose \mu_i(\a)},
    \end{align*}
    where $\mu(\a)$ is the vector of the multiplicities of parts of $\a.$
    \end{EX}
 Although the bound might look cumbersome, it can be easily computed for small values of $m$ and $k$, and most importantly, this bound is  independent of $d$. Using the Hardy-Ramanujan asymptotic formula %
 , the  bound in Theorem \ref{thm:Set-permutations} grows at most as $\exp(\pi\sqrt{2m/3})\cdot(k-1)^{mk}.$ %

\paragraph{\bf Functions on Graphs.}
Many problems arising in applications involve data in the form of weighted graphs. Recall that all weighted graphs on $d$ vertices can be uniquely represented by its $d\times d$ adjacency matrix which is 
invariant up to simultaneous permutation of rows and columns since the labeling of a graph's vertices is arbitrary. It follows that any function of graphs that takes adjacency matrices as input should be invariant under the action of $\fS_d$ by conjugation. Let $V$ consist of all symmetric $d\times d$ matrices. Clearly, all adjacency matrices of simple graphs on $d$ vertices are contained in $V$. Existing results, such as those contained in the OEIS sequence \cite{oeisA007717}, imply that the dimension $\dim(\R[V]_m^{\fS_d})$ is bounded by a quantity independent of $d.$ %
We prove an alternative upper bound for $\dim(\R[V]_m^{\fS_d})$ and show that the dimension $\dim(\R[V]_m^{\fS_d})$ stabilizes for all $d\geq 2m.$ %
\begin{EX}[label={theorem: graphs rank}]{(Graphs)}{}
In the setting $d\geq 2m$, the quantity $\dim(\R[V]_m^{\fS_d})$ is independent of $\dim V$ and satisfies:
\begin{align*}
    \dim(\R[V]_m^{\fS_d})\leq \sum_{j=0}^{m}\sum_{\a\vdash 2j}q_\a,
\end{align*}
where $q_{\a}$ is the number of non-negative integer symmetric matrices %
with row sums equal to $\a$.
\end{EX}
We are not aware of any procedure to compute $q_{\a}$ exactly. It can be upper bounded, however, by relaxing the symmetry assumption and stipulating that the row sums and columns sums are $\a$ and using the recursive procedure in \cite{miller-harrison}. %
A straightforward upper bound obtained by relaxing the symmetry assumption is $2m\cdot\exp(\pi\sqrt{m/12})\cdot(m-1)^{2mk}(2m)^{m(m-1)}$.%

\paragraph{\bf Functions on Point Clouds}
Problems in fields such as computer vision and robotics often involve data in the form of \textit{point clouds}, i.e. sets of vectors $S=\{x_1,\ldots, x_d\}\subset \R^k$. Clearly, we may store a point cloud as a matrix $X\in \R^{d\times k}$ having $x_i$ as its rows. 
Point clouds are invariant under the relabeling
of points in the cloud, as well as simultaneous orthogonal transformations. Let $G$ be the group of these symmetries acting on matrices $X\in \R^{d\times k}$.  The following result gives a dimension-independent upper bound on $\dim(\R[V]_m^G)$.

\begin{EX}[label={theorem: point cloud rank}]{(Point clouds)}{}
In the regime $d\geq 2m$, the quantity $\dim(\R[V]_m^{G})$ is independent of $\dim V$ and satisfies: 
\begin{align*}
    \dim(\R[V]_m^{G})\leq \sum_{j=0}^{m/2}\sum_{\a\vdash 2j}q_\a,
\end{align*}
where $q_{\a}$ is the number of non-negative integer symmetric matrices with row sum equal to $\a$. %
\end{EX}

 \paragraph{\textbf{A Monte Carlo rank estimator}} Computing the dimension of invariants, as in the aforementioned examples, typically reduces to tedious combinatorial arguments.
We now describe an alternative approach, which allows one to estimate the dimension of the invariants simply by sampling elements of the group.
The statement of the theorem requires some further notation, which we outline now; see Section~\ref{sec:notation_part_comp} for details. When $G$ is a finite group acting on $V$, the symbol $\E_g[\cdot]$ will mean expectation with respect to the uniform measure on $G$. 
For any integer $k$, the symbol {$C_d(k)$ denotes the set of all weak compositions of $k$ into $d$ parts and $\eta(\a)$ denotes the product of the factorials of the multiplicities of non-negative entries in $\a$. 
The symbols $\lambda_i(g)$ denote the eigenvalues of $g\in G$ represented as a matrix, while $g[\alpha]$ and ${\rm per} \, g[\alpha]$ denote the submitrix of $g$  indexed by $\alpha$  and its permanent.
\begin{THM}[label={thm:calculate_dim_intro}]{(Calculating the rank)}{}
     Let $G$ be a finite group acting linearly on a $d$-dimensional vector space $V$. Then,
\begin{equation}\label{eqn:sophisticated}
\dim(\R[V]_m^G)=\E_{g} \left[\sum_{j=0}^m\sum_{\a\in {C_d(j)}}\prod_{i=1}^d \lambda_{i}(g)^{\a_i}\right],\quad\dim(\R[V]_m^G)=\E_{g} \left[\sum_{j=0}^m \sum_{\a\in {C_d(j)}} \frac{{\rm per}\,  
 g[\a]}{\eta(\a)}\right].
\end{equation}
\end{THM}
Although the first equation in \eqref{eqn:sophisticated} looks complicated, it is not very difficult to compute the terms inside the expectation since the degree $m$ is assumed to be small. Consequently, even when the group $G$ is very large one may estimate $\dim(\R[V]_m^G)$ by drawing iid samples from $G$ and replacing $\E_g$ by an imperical average.
The downside of the expression is that one needs to compute eigenvalues $\lambda_{i}(g)$ of a $d\times d$ matrix. The second equality in \eqref{eqn:sophisticated} provides an alternative, wherein one instead needs to only compute the permanent of small submatrices of $g$ whose columns and rows are indexed by the weak composition $\a$. Since $\a$ is a composition of at most $m,$ each submatrix is at most $m\times m.$ Again, even when the group $G$ is very large one may estimate $\dim(\R[V]_m^G)$ by using an empirical average.

\paragraph{\bf Connection to representation stability} The stabilization of the dimension of fixed-degree invariant polynomials is not merely coincidental, but rather a consequence of a broader phenomenon known as \emph{representation stability} \cite{church2013representation, church2014representation}. To elucidate this connection, consider a sequence of nested groups $\{G_d\}_{d\in\N}$ (e.g., $G_d = \fS_d$) acting on a sequence of vector spaces $\{W_d\}_{d\in\N}$ of increasing dimension that embed isometrically into each other $W_d \hookrightarrow W_{d+1}$ (e.g., $W_d = \R[V_d]_m$). Representation stability asserts that if these sequences satisfy certain algebraic relationships, then the subspaces of invariants $W_d^{G_d} = \{ w \in W_d \mid g \cdot w = w \,\,\forall g \in G_{d}\}$ become isomorphic to each other.
The following result, which can also be deduced from the more general categorical framework in \cite{levin2023free}, is presented here via direct and elementary arguments.
\begin{THM}[label={thm:free-basis}]{(Polynomial basis across dimensions)}{}
    Let $\{G_d\}_{d\in\N}$ and $\{V_d\}_{d\in\N}$ be any of the sequences of groups and vector spaces considered in Examples~\ref{thm:Permutations}-\ref{theorem: point cloud rank} and define $W_d:= \R[V_d]_m.$ Then, for each example, there exist $r \in \N$ and invariant polynomials $\left\{f_1^{(d)}, \dots, f_r^{(d)} \in W_d^{G_d}\right\}_{d\in\N}$ such that 
    \begin{enumerate}
      \item (\textbf{Basis}) For all large $d$, the polynomials $f_1^{(d)}, \dots, f_r^{(d)}$ form a basis of $W_d^{G_d}.$
        \item (\textbf{Consistency}) The basis elements project onto each other: $${\rm proj}_{W_d}\left(f_i^{(d+1)}\right) = f_i^{(d)} \quad \text{for all }i \in [r] \text{ and } d\in \N. $$
    \end{enumerate}
    \end{THM}
A sequence of polynomials satisfying these two properties is called a \emph{free basis}. The existence of a free basis enables us to encode an infinite sequence of ``consistent'' invariant polynomials $$\left\{p_{d}(x)=\sum_{i=1}^r \alpha_i f_i^{(d)}(x)\right\}_{d}$$ using only finitely many parameters $\alpha_{1}, \dots, \alpha_{r}$. Here, consistency means that polynomials with more variables $p_{d}$ and fewer variables $p_{d'}$ yield matching outputs when evaluated on low-dimensional inputs $x \in V_{d'}$ (we can evaluate $p_{d}(x)$ by setting to zero any components that do not appear in $V_{d'}$). We refer to such dimension-independent polynomial sequences as \emph{free polynomials}.
Common examples of free polynomials include the $\ell_{2}$-norm squared of a vector and the trace of a symmetric matrix, with many more appearing in optimization, signal processing, and particle systems  \cite{levin2023free}. The finite parameterizability of free polynomials makes them a powerful tool for computation and statistical inference across dimensions --- two aspects we explore next.

\paragraph{\bf Computing invariant bases and kernels} Although the bounds in Examples \ref{thm:Permutations}-\ref{theorem: point cloud rank} ensure that the rank of an invariant kernel is independent of its data dimension, it is not always the case that invariant kernels can be efficiently computed in high dimensions. Typical kernels used in applications, such as the Gaussian and standard polynomial kernels, are compactly expressible using inner products or vector norms and are therefore efficient to compute. Our next contribution is to show in two important cases how invariant kernels and free polynomial bases can be computed efficiently.

Free basis of invariant polynomials allows us to develop a procedure for evaluating invariant kernels. Let $\{V_d\}_{d\in\N}$ a sequence of vector spaces acted on by a sequence of groups $\{G_d\}_{d\in\N}.$ Suppose that $K_d:V_d\times V_d\to\R$ is a $G_d-$invariant kernel of degree $m$ and $\{f_1^{(d)}, \ldots, f_r^{(d)}\}_{d\in\N}$ form a free basis of invariants for $\{\R[V_d]_m^{G_d}\}_{d\in\N}.$ We begin by showing that the invariant kernel $K_d$ can be written as a symmetric bilinear form with respect to the free basis:
$$K(x, y) = F(x)^\T C F(y)$$
for all $x, y\in V_d,$ where $F(x) = \begin{bmatrix}
    f_1^{(d)}(x) & \ldots & f_r^{(d)}(x)
\end{bmatrix}^\T.$ Moreover, this matrix $C$ is unique and is independent of the dimension $d$ for all $d\geq m.$ Note that while the existence of such a matrix is guaranteed, determining its entries may be non-trivial.

For each of the sequences of vector spaces in Examples \ref{thm:Permutations}-\ref{theorem: point cloud rank}, a free basis of invariants can be formed by calculating the $\fS_d-$average of each monomial in $\R[V_d]_m.$ However, the number of operations required to evaluate each monomial's average is at least $|\fS_d|=d!$ and so forming this basis is not feasible for large values of $d.$ In general, it is not clear how to form free bases that can be computed with a numerical effort that is polynomial in the dimension $d$. With this in mind, we will show that there are indeed efficiently computable free bases for permutation and set-permutation invariant polynomials. Namely, we will show that \textit{elementary symmetric polynomials indexed by partitions} and \textit{polarized elementary symmetric polynomials indexed by vector partitions} form free bases in these two cases, respectively. The number of elements in these bases are given by the expressions in Examples \ref{thm:Permutations} and \ref{thm:Set-permutations}, respectively. %
Utilizing these bases, we obtain bounds on the numerical complexity of evaluating permutation and set-permutation invariant polynomial kernels that scales only  linearly in the dimension $d$. %

\begin{THM}[label={thm:complexity}]{(Computing $\fS_d$-invariant kernels for vectors and sets)}{}
\begin{enumerate}
    \item Let $K:\R^d\times\R^d\to\R$ be a permutation invariant polynomial kernel of degree $m.$ Assume that one has access to a matrix $C$ such that $K(x, y) = E_m(x)^\T C E_m(y)$ for all choices of $x, y\in\R^d,$ where $E_m(x)$ is the vector of elementary symmetric polynomials indexed by partitions of degree at most $m$. Then there exists an algorithm to evaluate $K(x,y)$ that uses
    $$O\left(md + m\exp\left(\sqrt{2m/3}\right)\right)$$
    floating point operations.
    \item Let $K:\R^{d\times k}\times\R^{d\times k}\to\R$ be a set-permutation invariant polynomial kernel of degree $m.$ Assume that one has access to a matrix $C$ such that $K(x, y) = P_m(x)^\T C P_m(y)$ for all choices of $x, y\in\R^{d\times k},$ where $P_m(x)$ is the vector of polarized elementary symmetric polynomials indexed by partitions of degree at most $m$. Then there exists an algorithm to evaluate $K(x,y)$ that uses
    $$O\left(m\exp\left(2\sqrt{2m/3}\right)\cdot k^{2mk}+\frac{dk^m}{m!}\right)$$
    floating point operations.
\end{enumerate}
\end{THM}

The proof of Theorem \ref{thm:complexity} amounts to determining the complexity of evaluating the relevant basis of invariant polynomials (see Propositions \ref{prop:permutation-complexity} and \ref{prop:set-complexity}), and so similar statements could be made for other groups given a set of efficiently computable free bases. However, although efficient computation of invariant kernels is intertwined with the efficient computation of invariant bases, the two problems are distinct. So, while invariant polynomial bases provide one method of computing invariant kernels in high dimensions, invariant kernels may be computed without explicitly evaluating the relevant basis. For example, in Illustration \ref{ex:set-tf}, we give an example of set-permutation invariant kernel that can be evaluated efficiently without needing to evaluate the basis of polarized elementary symmetric polynomials.

\paragraph{\bf Generalization of invariant polynomial regression across dimensions} As our final contribution, we leverage the existence of the basis prescribed by Theorem~\ref{thm:free-basis} to study polynomial regression problems where we aim to learn a sequence of fixed-degree invariant polynomials. Formally, we wish to recover a free polynomial\footnote{We drop the index $d$ as it can be inferred from the dimension of the input $x.$}
--- parameterized by $\alpha^{\star} \in \R^k$ via $p^{\star}(x) = \sum_{i=1}^k \alpha_i^{\star} f_i(x)$ with $f_1, \ldots, f_k$ a basis of $\R[V_d]_m^{G_d}$ --- from a set of noisy input-output pairs $(x_i,y_i)$ where $y_i = p^{\star}(x_i) + \varepsilon_i$ with the $\varepsilon$'s drawn i.i.d. with mean-zero and bounded variance. Importantly, the feature vectors $x$ might have different dimensions. We propose a least-squares-type estimator of the latent parameter via
\begin{equation}\label{eq:informal-estimator}
    \widehat \alpha = \mathop{\argmin}_{\alpha \in \R^k} \frac{1}{2n}\sum_{i = 1}^n \left( \sum_{j = 1}^k \alpha_i f_j(x_i) - y_i \right)^2,
\end{equation}
note that the basis elements $f_k$ are different depending on the dimension of their input. 
\begin{THM}[label={thm:minimax-informal}]{(Minimax optimality across dimensions - Informal)}{}
    Fix a sequence of polynomials $\{p^{\star}_d \in \R[V_d]^{G_d}\}$ parameterized by $\alpha^{\star} \in \R^k,$ a training set of input-output pairs $S = \{(x_1, y_1), \dots ,(x_1, y_n)\}$ and a test set $T = \{(x_1', y_1'), \dots (x_m', y_m')\}$ with the feature vectors in potentially different dimensions. Then, the estimator $\widehat \alpha$ in \eqref{eq:informal-estimator}, trained with $S$, yields a the sequence of invariant polynomials $$\left\{\widehat p_d = \sum_{i=1}^k \widehat \alpha_k f_k^{(d)} \in \R[V_d]^{G_d}\right\}$$
    that is minimax optimal with respect to the excess risk defined by any test set $T.$
    \end{THM}

\textbf{Outline.} The outline of the rest of the paper is as follows. The remainder of this section overviews the related literature. Section~\ref{section: preliminaries} provides the relevant background on invariant polynomial kernels and proves our main Theorem~\ref{theorem: rank is number of orbits}. Section~\ref{section: theorem} verifies each of the Examples~\ref{thm:Permutations}, \ref{thm:Set-permutations}, \ref{theorem: graphs rank}, and \ref{theorem: point cloud rank}. Section~\ref{sec:characters} overviews basic representation and character theory and proves Theorem~\ref{thm:calculate_dim_intro}. The next three sections concern connections to representation stability and generalization across dimensions: Section \ref{sec:stability_background} gives a background on representation stability, Section \ref{sec:generalization} discusses polynomial regression across dimensions and provides minimax optimal bounds for excess risk, and Section \ref{sec:bases} provides efficient methods for computing permutation and set-permutation invariant kernels. Finally, Section \ref{sec:numerics} details three numerical experiments that support the theory in this paper. Proofs from Section~\ref{section: preliminaries} and Section~\ref{sec:generalization} are given in Appendix~\ref{app:prelim-proofs} and Appendix~\ref{app:generalization} respectively.

\subsection{Related literature}

A number of recent works have examined the gain achieved by learning with invariant kernel methods. 

{\it Approximation and generalization gain in invariant methods.} Mei, Misiakiewicz, and Montanari \cite{mei2021learning} characterized the gain in both approximation and generalization error of certain invariant random feature and kernel methods. In particular, the assumed target functions are invariant under the action of some subgroup of the orthogonal group and that data is uniformly sampled from either a $d-$dimensional sphere of radius $\sqrt{d}$ or a $d-$dimensional hypercube. In both the underparametrized and overparametrized regimes, they showed that the gain from learning under invariances with kernel ridge regression is $d^\a$ for some parameter $\a$ that is dependent on the group of invariances. Several groups, such as both one and two-dimensional cyclic groups, satsify $\a=1$ yielding a linear gain in the dimension. %

{\it Sample complexity and generalization bounds under invariances.} Both \cite{tahmasebi2023exact} and \cite{bietti2021sample} discussed the gain in sample complexity and prove generalization bounds for kernel ridge regression when learning under invariances. The paper \cite{bietti2021sample} studied invariances under isometric actions of finite permutation groups on the sphere and \cite{tahmasebi2023exact} presented similar results for arbitrary Lie group actions on arbitrary compact manifolds, a much more general class of problems. In both works the statistical properties of the KRR estimator are controlled by the decay of the eigenvalues of certain operators. In \cite{tahmasebi2023exact}, the eigenvalues of the Laplace-Beltrami operator are controlled and in \cite{bietti2021sample} the eigenvalues of the covariance operator are controlled. A key tool used in both of these works is counting the dimensions of various classes of invariant functions: in \cite{bietti2021sample}, the proofs rely on comparing the dimensions of invariant and non-invariant spherical harmonics at varying degrees, and in \cite{tahmasebi2023exact} the arguments utilize a dimension counting theorem for functions on the quotient space $\cM/G$ where $\cM$ is a compact manifold and $G$ a Lie group. Though we are interested in a different question than in \cite{bietti2021sample, tahmasebi2023exact}, we use a similar strategy of determining the dimensions of invariant function spaces. Our work focuses on spaces of polynomials and in many of our concrete examples, elementary combinatorial arguments can be used to explicitly count and bound the dimensions of invariant polynomial spaces without being restricted to finite group actions on the sphere as in \cite{bietti2021sample} or having to calculate volumes and dimensions of quotient spaces as in \cite{tahmasebi2023exact}. In our examples, there is no dependence on the dimension of the data; instead, our results depend on the degree of the polynomials being considered.  

{\it Generalization gain of invariant predictors.} The work \cite{elesedy2021generalization} also discussed the benefit of invariances, though a different approach to enforcing invariance is taken. While our work, as well as the previously discussed works, consider kernels that are themselves assumed to be invariant under the action of some group, \cite{elesedy2021generalization} considered predictors that arise via normal KRR and are then averaged over a group to form an invariant function. For a given RKHS $\cH,$ the action of a group $G$ induces the decomposition $\cH = \bar{\cH}\oplus\cH_\perp$ where $\bar{\cH}$ is the subspace of $\cH$ consisting of all $G-$invariant functions. The so-called \textit{generalization gap} of the squared error loss between a predictor and its projection onto the invariant subspace $\bar{\cH}$ scales with the effective dimension of $\cH_\perp,$ which is calculated in terms of the eigenvalues of the integral operator mapping the space of square-integrable functions to $\cH.$ From this result, it is clear that having strong upper bounds on the dimension of invariant function spaces is important for understanding the benefit of learning with invariances in the worst case. Unlike \cite{mei2021learning, bietti2021sample, tahmasebi2023exact}, the generalization benefits in \cite{elesedy2021generalization} do not depend on properties of the underlying data distribution and are instead focused on the underlying structure of kernels, which is also the case for our work.

{\it Kernels with finite rank.} The previous works examine the role of invariance in decreasing the generalization error of kernel methods. This is different from our guiding question of how exploiting invariances can lead to a decrease in the rank of a kernel. The recent work \cite{cheng23finiterank} considers the generalization error of kernel ridge regression for kernels of finite rank. In particular, the authors proved sharp upper and lower bounds on the test error showed that variance is bounded linearly in the rank of the kernel. This suggests that low-rank kernels, such as those that arise from exploiting invariance, could be less prone to over-fitting data. 

{\it Permutation invariant kernels.} The paper \cite{Klus_2021}  defined \textit{antisymmetric} and \textit{symmetric} kernels on $\R^d,$ with their notion of symmetric kernels coinciding with our definition of $\fS_d-$invariant kernels. These antisymmetric and symmetric kernels were applied to problems in quantum chemistry and physics to exploit known symmetries in data. The authors showed that the degree of the feature space generated by an $\fS_d-$invariant polynomial kernel of degree $m$ is bounded by the sum $\sum_{j=0}^m p(j),$ which is comparable to our Example \ref{thm:Permutations}. However, in our verification of Example \ref{thm:Permutations}, we show that this is a special case of our main theorem, Theorem \ref{theorem: rank is number of orbits}. %
While there is some overlap between our results and the work in \cite{Klus_2021}, our arguments generalize to a much wider class of groups. Moreover, the authors of \cite{Klus_2021} relied on either averaging kernels over the whole group $\fS_d$ and/or evaluating large number of hyperpermanents to compute the value of $\fS_d-$invariant kernels. This becomes infeasible for high dimensional problems since the size of the permutation group is exponential in the dimension. In contrast, we utilize the theory of symmetric polynomials to provide efficient methods of computing $\fS_d$-invariant kernels for large values of $d.$

\paragraph{\it Representation stability and generalization across dimensions.} Originally introduced in algebraic topology \cite{church2013representation, church2014representation}, representation stability has since been applied to study limits of several mathematical objects \cite{sam2017grobner, sam2015stability, sam2016gl,conca2014noetherianity,alexandr2023moment}. Closer to our setting, \cite{levin2023free} used representation stability to parameterize sequences of convex sets of increasing dimension, and \cite{levin2024any} employed it to design equivariant neural networks that accommodate inputs of arbitrary dimension. To our knowledge, no generalization bounds across dimensions have been established. However, in the context of Graph Neural Networks (GNNs) \cite{gori2005new, micheli2009neural, bruna2013spectral, wu2020comprehensive}, there has been noteworthy progress on the related notion of transferability. GNNs can handle graphs of arbitrary sizes, and as these graphs increase in size and converge to a limiting object known as a {\em graphon}, a natural question arises: do the GNN outputs also converge? Recent results \cite{ruiz2021graph, maskey2023transferability} show that they do, ensuring that the outputs on “related” graphs of different sizes remain close to one another.

\section{Polynomial kernels and their rank}\label{section: preliminaries}

We begin by recording some background on kernels, following the standard texts on the subject \cite{scholkopf2002learning,shawe2004kernel}. To this end, let $V$ be a finite-dimensional vector space with a fixed basis. 
A {\em kernel} $K\colon V\times V\to\R$ is any symmetric function of its two arguments. The main use of kernels in applications is to measure pairwise similarity between data points 
$x_1,\ldots, x_n\in V$ via the $n\times  n$ symmetric matrix  $\mathbf{K}=[K(x_i,x_j)]_{i,j=1}^n$. The kernel $K$ is called {\em positive semidefinite (PSD)}  if the matrix $\mathbf{K}$ is positive semidefinite for any data points. Typical examples of PSD kernels on $\R^d$ are the polynomial kernel $K(x,y)=(1+x^{\top} y)^m$, the Taylor features kernel $K(x,y)=\exp(-(\|x\|^2+\|y\|^2)/2\sigma^2)\cdot\sum_{j=1}^m\tfrac{\langle x,y\rangle^j}{\sigma^{2j}j!}$, and the Gaussian kernel $K(x,y)=\exp(-\|x-y\|^2/2\sigma^2),$  where $\s$ is some fixed bandwidth parameter.

A kernel $K$ is said to admit a {\em rank $r$-factorization} if there exist maps $\phi,~ \psi\colon V\to\R^r$---called {\em feature maps}---satisfying 
$$K(x, y)=\langle\phi(x), \psi(y)\rangle\qquad \forall x,y\in V.$$ 
The \textit{rank of $K$}, denoted $\rank K$, is the minimal $r$ such that $K$ admits a rank $r$-factorization. Importantly, the rank of $K$ upper-bounds the rank of the corresponding kernel matrix:
$$\rank \mathbf{K}\leq \rank K.$$

\subsection{Group invariance and symmetrization.}\label{sec:gi_sym}
We will be interested in the interplay between kernel rank and group invariance. Namely, let $G$ be a group acting on $V$.  A kernel $K$ on $V$ is called {\em $G$-invariant} if the following equality holds:
\begin{equation}\label{eqn:g_invar_pre}
  K(gx,hy)=K(x,y)\qquad \forall x,y\in V, ~g,h\in G.  
\end{equation}
Notice that in $\eqref{eqn:g_invar_pre}$, the group $G$ acts independently on the two coordinates $x$ and $y$. This should be contrasted with the much weaker property of $G$-{\em translation invariance}, which stipulates that \eqref{eqn:g_invar_pre} holds only with $h=g$. Importantly, $G$-invariant kernels are well-defined on the orbit space $V/G$, i.e., the set of orbits of $V$ under the action of $G$.

Any kernel can be symmetrized with respect to any finite group $G$ simply by averaging:
$$K^*(x, y):=\tfrac{1}{|G|^2}\sum_{g,h\in G} K(gx,hy).$$
The analogous construction for infinite groups requires averaging with respect to a translation invariant measure on $G$, called the Haar measure. The precise formalism is as follows. 

\begin{assumption}[Haar measure]\label{ass:group_assump}
Suppose that $G$ is a  compact Hausdorff topological group, equipped with the Borel $\sigma$-algebra and the right Haar measure $\mu$ normalized to satisfy $\mu(G)=1$. Suppose moreover that $G$ acts continuously on $V$.
\end{assumption} 

To shorten the notation, we will use the symbol $\E_{g}$  to mean expectation with respect to the Haar measure $\mu$ on $G$. A key property of the Haar measure is the invariance
$\E_g \phi(gh)=\E_g \phi(h)$ 
for any measurable function $\phi$ and group element $h\in G$. In particular, any continuous function $f$ yields the $G$-invariant symmetrization   
$$f^*(x) := \mathop \E_g f(gx).$$
Similarly, any continuous kernel $K$ on $V$ yields the new $G$-invariant kernel by averaging
$$K^*(x, y) := \mathop\E_{g,h} K(gx, hy).$$

The following lemma is standard; we provide a short proof in Section~\ref{sec:proof_lem_inv}.
\begin{lemma}\label{lem:inv_kern}
The kernel $K^*$ is $G$-invariant. Moreover, if $K$ is PSD then so is  $K^*$.
\end{lemma}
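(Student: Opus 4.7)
The plan is to prove the two assertions separately. For $G$-invariance I will reduce to right-invariance of the Haar measure applied independently to each copy of $G$ in the double average; for the PSD claim I will use the standard feature-map characterization of PSD kernels (Moore--Aronszajn) and transfer the factorization through averaging.

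For the $G$-invariance, fix $g, h \in G$ and expand $K^*(gx, hy) = \E_{g', h'} K(g'gx, h'hy)$. The product Haar measure on $G \times G$ is right-invariant separately in each coordinate (by Fubini), so applying right-invariance in $g'$ with $h'$ held fixed---absorbing the right-multiplication by $g$---and then again in $h'$ yields $K^*(gx, hy) = \E_{g', h'} K(g'x, h'y) = K^*(x, y)$. Symmetry of $K^*$ as a function of its two arguments is an easy byproduct: using the symmetry of $K$ and swapping dummy variables $g' \leftrightarrow h'$ in the double integral gives $K^*(y, x) = \E_{g', h'} K(g' y, h' x) = \E_{g', h'} K(h' x, g' y) = K^*(x, y)$.

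For the PSD claim, write $K(x, y) = \langle \phi(x), \phi(y) \rangle_H$ for some feature map $\phi \colon V \to H$ into a Hilbert space. Given data points $x_1, \ldots, x_n \in V$ and scalars $c_1, \ldots, c_n \in \R$, set $\xi(g) := \sum_i c_i\, \phi(g x_i) \in H$ and compute
\[
\sum_{i,j} c_i c_j K^*(x_i, x_j) = \E_{g, h} \bigl\langle \xi(g), \xi(h) \bigr\rangle_H = \bigl\langle \E_g \xi(g), \E_h \xi(h) \bigr\rangle_H = \bigl\| \E_g \xi(g) \bigr\|_H^2 \geq 0,
\]
where the middle equality uses bilinearity of the inner product together with Fubini and the independence of $g$ and $h$ under the product Haar measure. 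Equivalently, this exhibits the feature map $\phi^*(x) := \E_g \phi(gx)$ for $K^*$, making PSD-ness transparent.

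The one technical point requiring care is the vector-valued averaging step: the expression $\E_g \xi(g)$ must be interpreted as a Bochner integral in $H$, and the exchange with the inner product must be justified. Under the standing assumption that $G$ is a compact Hausdorff group acting continuously on $V$, the map $g \mapsto \xi(g)$ is continuous and uniformly norm-bounded on $G$, so the Bochner integral exists and the inner-product swap is legitimate. For a finite group the integral collapses to a finite average and no measure-theoretic subtlety arises.
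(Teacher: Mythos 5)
Your $G$-invariance argument matches the paper's: both rely on right-translation invariance of the Haar measure applied separately in each variable. For the PSD claim, however, you take a genuinely different and in fact more careful route. The paper moves the sum inside the double integral and then annotates the integrand $\sum_{i,j} c_i c_j K(gx_i, hx_j)$ as pointwise nonnegative, but that pointwise claim is not actually true when $g \neq h$: in feature-map terms the integrand equals $\left\langle \sum_i c_i \varphi(gx_i),\, \sum_j c_j \varphi(hx_j)\right\rangle$, an inner product between two generally \emph{different} vectors, which can be negative (for instance $K(x,y)=xy$ on $\R$ with $G=\{\pm 1\}$, $n=1$, $x_1=c_1=1$, $g=1$, $h=-1$ gives $-1$). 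Your argument correctly identifies where positivity actually enters: only after the double average, via the identity $\E_{g,h}\langle \xi(g),\xi(h)\rangle = \left\|\E_g \xi(g)\right\|_H^2 \geq 0$. This is the right way to make the paper's argument rigorous, and your care about the Bochner integral is precisely the technical point needed to justify moving the vector-valued average inside the inner product. (An alternative that avoids feature maps, at least for finite $G$: recognize the double sum over $G\times G$ as the quadratic form of $K$ on the $|G|\cdot n$ points $\{gx_i\}$ with coefficients $c_i/|G|$, which is nonnegative by PSD-ness of $K$; for compact $G$ a limiting argument then yields the integral version.)
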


The main thrust of our paper is to show that the rank of the symmetrized kernel $K^*$ is often much smaller than the rank of $K$.

\subsection{Invariant polynomials and kernel rank} 
In the rest of this paper, we focus on kernels that are polynomials of a fixed degree. To be precise, let $\R[V]_m$ be the vector space of all polynomials over $V$ with total degree at most $m.$ Any linear action of a group $G$ on $V$ promotes to a linear action of $G$ on the polynomial ring $\R[V]_m$ by 
$$(g\cdot f)(x)=f(g^{-1}x).$$
The symbol $\R[V]^G_m$ will denote the set of $G$-invariant polynomials $f \in \R[V]_m$, meaning
$$f(gx)=f(x)\qquad \forall x\in V,~g\in G.$$
The dimension of $\R[V]^G_m$,  a linear subspace of $\R[V]_m$,  will play a key role in the paper.

\begin{definition}[Polynomial kernel]
    A kernel $K\colon V\times V\to\R$ is a \textit{polynomial kernel} if $K(x,y)$ is a polynomial for all $x,y \in V$.
  The \textit{degree} of $K(x,y)$ is its maximal degree 
 in $x$ (or equivalently in $y$, by symmetry).
\end{definition}

\begin{assumption}\label{ass:setting} The following hold.
    \begin{enumerate}
\item  The function $K$ is a degree $m$ polynomial kernel on $V$.
\item The group $G$ acts on $V$ satisfying Assumption~\ref{ass:group_assump}.
\item The set of polynomialrs $\{ f_1,\ldots,f_N \}$ forms a basis of $\R[V]_{m}$.
\end{enumerate}
\end{assumption}

It will be convenient to express $K$ as a bilinear form with respect to the basis $\{f_i\}_{i=1}^N$---the content of the following standard lemma. A short proof can be found in Section~\ref{sec:proof_lembilin}.

\begin{lemma}[Bilinear form]\label{lemma: C is unique}
    Suppose that Assumption~\ref{ass:setting} holds. Then there exists a unique matrix $C\in \R^{N\times N}$, which is necessarily symmetric, such that for all $x,y\in V$, it holds: 
    \begin{align}
    K(x, y)&= \F(x)^\T \cdot C\cdot \F(y),\label{eqn:K_as_lin_form}\\
    K^*(x, y) &= \F^*(x)^\T \cdot C\cdot \F^*(y)\label{eqn:K_as_lin_form_sym},
\end{align}
where we set $\F(x)=[f_1(x),\ldots,f_N(x)]^\T$ and $\F^*(x)=[f_1^*(x),\ldots,f^*_N(x)]$.
\end{lemma}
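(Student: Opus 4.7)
The plan is to extract the matrix $C$ by expanding $K$ in the tensor-product family $\{f_i(x)f_j(y)\}_{i,j=1}^N$, and then to propagate the resulting identity through the symmetrization by pulling the double Haar expectation through the finite sum. First I would fix $y\in V$ and view $K(\cdot,y)$ as an element of $\R[V]_m$; expanding in the basis $\{f_i\}$ uniquely writes $K(x,y)=\sum_{i=1}^N c_i(y)f_i(x)$ for scalars $c_i(y)\in\R$. The crucial subclaim is that each coefficient function $c_i\colon V\to\R$ is itself a polynomial of degree at most $m$.

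To prove this subclaim, I would select points $x_1,\ldots,x_N\in V$ such that the evaluation matrix $M_{ji}:=f_i(x_j)$ is invertible; such points exist because the point-evaluation functionals on $V$ span the dual of the finite-dimensional space $\R[V]_m$ (their joint annihilator consists of polynomials vanishing identically, hence is trivial). The linear system $K(x_j,y)=\sum_i M_{ji}c_i(y)$ inverts to $c_i(y)=\sum_j (M^{-1})_{ij}K(x_j,y)$, exhibiting $c_i$ as a fixed linear combination of the functions $K(x_j,\cdot)$, each of which is polynomial of degree at most $m$ by the definition of a degree-$m$ polynomial kernel. Expanding $c_i(y)=\sum_j C_{ij}f_j(y)$ in the basis then delivers the desired identity $K(x,y)=\F(x)^\T C \F(y)$. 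Uniqueness of $C$ follows because $\{f_i(x)f_j(y)\}$ is linearly independent in the space of bivariate polynomials on $V\times V$ (it is the natural tensor-product basis of $\R[V]_m\otimes \R[V]_m$). Symmetry $C=C^\T$ is then immediate: applying uniqueness to $\F(x)^\T C\F(y)=K(x,y)=K(y,x)=\F(x)^\T C^\T \F(y)$ forces $C=C^\T$.

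For the symmetrized kernel, I would simply interchange the double Haar expectation with the finite double sum:
\begin{equation*}
K^*(x,y)=\E_{g,h}\sum_{i,j}C_{ij}f_i(gx)f_j(hy)=\sum_{i,j}C_{ij}\,\E_g f_i(gx)\,\E_h f_j(hy)=\F^*(x)^\T C \F^*(y),
\end{equation*}
where the interchange is legitimated by Fubini's theorem on the compact product group $G\times G$, using continuity of each $f_i$ and the continuity of the action. The main technical subtlety throughout is the subclaim that each $c_i(y)$ has degree at most $m$ in $y$; this relies crucially on $K$ being polynomial of degree at most $m$ \emph{separately} in each argument (as guaranteed by symmetry of $K$ and the degree bound in $x$), rather than merely being a polynomial in the pair $(x,y)$ jointly.
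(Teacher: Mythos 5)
Your proof is correct and follows essentially the same route as the paper: expand $K$ as a bilinear form in the basis $\{f_i\}$, deduce uniqueness of $C$ from linear independence, obtain symmetry of $C$ by applying uniqueness to the transposed identity $K(x,y)=K(y,x)$, and then take the double Haar expectation to get the symmetrized statement. The only difference is that you supply additional detail that the paper glosses over — notably the evaluation-matrix argument justifying that the coefficient functions $c_i(y)$ are themselves degree-$m$ polynomials (the paper simply asserts the bilinear expansion exists), and the explicit appeal to Fubini on $G\times G$ when pulling the expectation through the sum.
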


Conveniently, the rank of the matrix $C$ in equation \eqref{eqn:K_as_lin_form} coincides with the rank of $K$; the following lemma is proved in \cite[Proposition 3.6]{altschuler-parrilo} and \cite[Corollary 3.8]{altschuler-parrilo} in greater generality.

\begin{lemma}[Exact rank formula]\label{lemma: rank k = rank C}
   Suppose Assumption~\ref{ass:setting} holds. Fix a matrix $C$ satisfying \eqref{eqn:K_as_lin_form}. Then $\rank K = \rank C.$  Moreover, there exist polynomial maps $\varphi,\,\psi\colon V\to\R^{\rank K}$ satisfying $K(x,y)=\langle \varphi(x),\psi(y)\rangle$ for all $x,y\in V$. In addition, $K$ is PSD if and only if $C$ is PSD, and then we can set $\psi=\varphi$. 
\end{lemma}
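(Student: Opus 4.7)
The plan is to establish $\rank K = \rank C$ by proving the two inequalities separately, with the polynomial feature maps of the claimed dimension falling out of the easy direction. The PSD statement then follows from a congruence argument. The only step with any substance is a linear-algebraic ``inversion'' of the feature map $\F$ on a cleverly chosen finite set of sample points; everything else is bookkeeping.

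For $\rank K \leq \rank C$, I would take any factorization $C = AB^{\T}$ with $A, B \in \R^{N\times r}$, where $r := \rank C$. Substituting into \eqref{eqn:K_as_lin_form} gives
\[
K(x,y) = \F(x)^{\T}AB^{\T}\F(y) = \langle A^{\T}\F(x),\, B^{\T}\F(y)\rangle,
\]
so the polynomial maps $\varphi := A^{\T}\F$ and $\psi := B^{\T}\F$ exhibit a rank-$r$ factorization of $K$. This simultaneously bounds $\rank K \leq r$ and produces the polynomial feature maps promised by the lemma (their dimension will match $\rank K$ once the reverse inequality is in hand).

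For the reverse inequality $\rank C \leq \rank K$, I would exploit linear independence of $\{f_i\}$ to choose $N$ sample points at which the feature vectors are themselves linearly independent. Indeed, if $c \in \R^N$ satisfies $c^{\T}\F(x) = 0$ for every $x \in V$, then $\sum_i c_i f_i \equiv 0$ on $V$, which forces $c = 0$; hence $\{\F(x) : x \in V\}$ spans $\R^N$, and I may select $x_1,\ldots,x_N \in V$ making the matrix $X \in \R^{N\times N}$ with rows $\F(x_i)^{\T}$ invertible. On these sample points the kernel matrix factors as $\mathbf{K} = XCX^{\T}$, so invertibility of $X$ yields $\rank \mathbf{K} = \rank C$. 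On the other hand, any rank-$r$ factorization $K(x,y) = \langle \phi(x), \psi(y)\rangle$ gives $\mathbf{K} = \Phi\Psi^{\T}$ with $\Phi, \Psi \in \R^{N\times r}$, so $\rank \mathbf{K} \leq r$. Chaining these bounds yields $\rank C \leq \rank K$.

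For the PSD addendum, if $K$ is PSD then the matrix $\mathbf{K} = XCX^{\T}$ from the previous step is PSD, and the identity $u^{\T}\mathbf{K}u = (X^{\T}u)^{\T}C(X^{\T}u)$ combined with invertibility of $X$ and symmetry of $C$ (from Lemma~\ref{lemma: C is unique}) forces $C \succeq 0$. Writing $C = LL^{\T}$ and setting $\varphi := L^{\T}\F$ then yields the self-factorization $K(x,y) = \langle \varphi(x), \varphi(y)\rangle$; the converse direction is immediate, since any $\mathbf{K} = \Phi\Phi^{\T}$ is automatically PSD. The main obstacle, such as it is, lies in the spanning argument used to produce invertible $X$---that is the only place where the basis hypothesis on $\{f_i\}$ is genuinely used, and it is what turns a factorization of the bilinear form into a statement about kernel rank.
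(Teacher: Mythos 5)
Your proof is correct. Note that the paper does not prove this lemma itself --- it defers to \cite{altschuler-parrilo} --- so there is no internal argument to compare against, and your self-contained derivation is a genuine addition. The central device is sampling $N$ points so that the matrix $X\in\R^{N\times N}$ with rows $\F(x_i)^\top$ is invertible; this is available because the $f_i$ are linearly independent as polynomials over the infinite field $\R$, hence linearly independent as functions on $V$, so the image $\{\F(x):x\in V\}$ must span $\R^N$. That single observation turns the bilinear form $K(x,y)=\F(x)^\top C\F(y)$ into the congruence $\mathbf K = XCX^\top$ and delivers both the reverse rank inequality $\rank C=\rank\mathbf K\leq\rank K$ and the PSD equivalence in one stroke. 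The easy inequality via a rank factorization $C=AB^\top$ (which also produces the polynomial feature maps $\varphi=A^\top\F$, $\psi=B^\top\F$ of dimension $\rank C=\rank K$) and the Cholesky-type factorization $C=LL^\top$ giving $\psi=\varphi$ in the PSD case are routine and correctly handled, as is the appeal to Lemma~\ref{lemma: C is unique} for the symmetry of $C$.
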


The following is the main theorem of the section, and it verifies the estimate \eqref{eqn:rank_est_basis0}. It shows that we may upper bound the rank of a polynomial kernel by the dimension of the vector space  $\R[V]_m^G$, consisting of $G$-invariant polynomials of degree at most $m$.

\begin{theorem}[Rank and polynomial invariants] \label{theorem: rank is number of orbits}
Suppose Assumption \ref{ass:setting} is satisfied. Then, the following inequalities hold: \begin{equation}\label{eqn:rank_est_basis}
\rank K\leq \dim\left({\rm span}\{f_1^*, \ldots, f_N^*\}\right)=\dim(\R[V]_m^G).
\end{equation}
\end{theorem}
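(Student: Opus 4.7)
The plan is to use the bilinear form representation supplied by Lemma~\ref{lemma: C is unique} applied to the symmetrized kernel, and then to identify the span of the symmetrized basis with the invariant subspace $\R[V]_m^G$. Since the theorem concerns a $G$-invariant kernel (the context set by \eqref{eqn:rank_est_basis0}), symmetrization acts as the identity: the invariance relation $K(gx,hy)=K(x,y)$ forces
$$K^*(x,y) \;=\; \E_{g,h}\, K(gx,hy) \;=\; \E_{g,h}\, K(x,y) \;=\; K(x,y),$$
so $K=K^*$. Invoking equation~\eqref{eqn:K_as_lin_form_sym} of Lemma~\ref{lemma: C is unique} then produces a symmetric matrix $C\in\R^{N\times N}$ satisfying
$$K(x,y) \;=\; K^*(x,y) \;=\; F^*(x)^\T C\, F^*(y) \;=\; \sum_{i,j=1}^N C_{ij}\, f_i^*(x)\, f_j^*(y).$$

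Next, I set $W:={\rm span}\{f_1^*,\ldots,f_N^*\}$ and $r:=\dim W$. Picking any basis $g_1,\ldots,g_r$ of $W$, I can factor $F^*=A\,G$ for some $A\in\R^{N\times r}$ with $G(x)=[g_1(x),\ldots,g_r(x)]^\T$. Substituting into the display above gives $K(x,y)=G(x)^\T(A^\T C A)G(y)$, which exhibits a rank-$r$ factorization of $K$ via the feature maps $\varphi=(A^\T C A)G$ and $\psi=G$ both valued in $\R^r$; this already yields $\rank K\leq r=\dim W$. To close the theorem I would then establish the equality $W=\R[V]_m^G$. The inclusion $W\subseteq \R[V]_m^G$ follows from right-invariance of the Haar measure: for any $h\in G$,
$$f_i^*(hx) \;=\; \E_g\, f_i(ghx) \;=\; \E_g\, f_i(gx) \;=\; f_i^*(x).$$
For the reverse inclusion, any $f\in\R[V]_m^G$ admits a basis expansion $f=\sum_i a_i f_i$; applying symmetrization and using its linearity gives $f=f^*=\sum_i a_i f_i^*\in W$, since symmetrization fixes $G$-invariant polynomials. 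This proves $W=\R[V]_m^G$ and in particular $\dim W=\dim\R[V]_m^G$, completing the chain of (in)equalities.

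There is no genuine obstacle in this argument beyond careful bookkeeping: once Lemma~\ref{lemma: C is unique} is in hand, the rest reduces to linear algebra together with the observation that $f\mapsto f^*$ is a linear idempotent operator projecting $\R[V]_m$ onto $\R[V]_m^G$. The main conceptual point, and the one I would want to state cleanly, is that applying symmetrization to an arbitrary basis of $\R[V]_m$ always produces a (possibly redundant) spanning set for $\R[V]_m^G$; this is what forces the rank bound to be sharp up to replacing $\{f_i^*\}$ by a basis of its span.
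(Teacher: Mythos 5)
Your proof is correct and takes essentially the same route as the paper's: factor the bilinear form through a basis $g_1,\ldots,g_r$ of $W={\rm span}\{f_1^*,\ldots,f_N^*\}$ to get $\rank K\leq r$, then prove $W=\R[V]_m^G$ by the two obvious inclusions. One point in your favor: you make explicit the step $K=K^*$ (valid precisely because $K$ is $G$-invariant) and then invoke equation~\eqref{eqn:K_as_lin_form_sym}, whereas the paper's proof cites \eqref{eqn:K_as_lin_form} and silently substitutes the symmetrized features in its place. As written, Assumption~\ref{ass:setting} does not actually impose $G$-invariance on $K$, and without that hypothesis the conclusion fails (take $K(x,y)=(1+x^\top y)^m$ and $G=\fS_d$); your reading of the ambient context and the explicit reduction $K=K^*$ correctly supplies the missing hypothesis.
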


Note that $\dim\left({\rm span}\{f_1^*, \ldots, f_N^*\}\right)$ is upper bounded by $|\{f_1^*, \ldots, f_N^*\}|,$ the number of  distinct symmetrizations of the basis elements $f_1, \ldots, f_N$. %
While it is not true in general %
that the number of $G-$symmetrizations of basis elements coincides with $\dim(\R[V]_m^G)$ (see Illustration~\ref{counterexample}), the two quantities do agree for permutation groups applied to the monomial basis, and so for our examples, it suffices to count the orbits of monomials under the relevant permutation actions. This is the content of the following lemma and illustration.

\begin{example}\label{counterexample}
    Suppose $\Z/4\Z$ acts on $\R^2$ by $90^\circ$ counterclockwise rotation, where the primitive generator sends $(a,b) \mapsto (-b,a)$. Consider  the space of degree 2 polynomials $\R[x_1, x_2]_2$ with the basis $\{1, x_1, x_2, x_1^2, x_1^2+x_2^2, x_1x_2\}.$ For all $g\in\Z/4\Z,$ the equality $g\cdot(x_1^2+x_2^2) = x_1^2+x_2^2$ holds and so $(x_1^2+x_2^2)^* = x_1^2+x_2^2.$ Note that $(x_1^2)^* = \frac{1}{4}(x_1^2+x_2^2+x_1^2+x_2^2) = \frac{1}{2}(x_1^2+x_2^2).$ It follows that $(x_1^2)^*$ and $(x_1^2+x_2^2)^*$ are not equal but are linearly dependent and so the number of $G-$symmetrizations of the basis is not equal to the dimension of the span of the symmetrizations.
\end{example}

\begin{lemma}\label{lemma: rank = number of orbits}
    Suppose $G\subset \fS_d$ acts on $V=\R^d$ by permutating coordinates and let $f_1, \ldots, f_N$ be the monomial basis of $\R[V]_m.$ Then,
    \begin{align*}
        |\{f_1^*, \ldots, f_N^*\}| = \dim(\Span\{f_1^*, \ldots, f_N^*\}) = \dim(\R[V]_m^G).
    \end{align*}
\end{lemma}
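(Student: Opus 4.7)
The plan is to show that all three quantities in the statement equal the same integer $r$, namely the number of $G$-orbits on the monomial basis of $\R[V]_m$. First, I would use the hypothesis that $G \subset \fS_d$ acts on $V=\R^d$ by permuting coordinates: this forces the induced action on $\R[V]_m$ to permute the monomial basis $\{f_1, \ldots, f_N\}$ outright, so there is a well-defined partition of this basis into $G$-orbits $\mathcal{O}_1, \ldots, \mathcal{O}_r$.

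Next, for any monomial $f_i \in \mathcal{O}_j$, the symmetrization $f_i^* = \frac{1}{|G|}\sum_{g \in G} f_i(g\,\cdot\,)$ is a positive scalar multiple of the orbit sum $\sum_{h \in \mathcal{O}_j} h$; by the orbit-stabilizer theorem the scalar equals $1/|\mathcal{O}_j|$. Two monomials in the same orbit therefore yield identical symmetrizations, while two monomials in distinct orbits yield symmetrizations whose monomial supports are disjoint, hence are both unequal and linearly independent. This simultaneously gives
$$
|\{f_1^*, \ldots, f_N^*\}| \;=\; \dim(\Span\{f_1^*, \ldots, f_N^*\}) \;=\; r.
$$

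To close the loop, I would show $\Span\{f_1^*, \ldots, f_N^*\} = \R[V]_m^G$. The inclusion $\subseteq$ follows since each $f_i^*$ is $G$-invariant (a specialization of Lemma~\ref{lem:inv_kern}). For the reverse inclusion, any $f \in \R[V]_m^G$ expands uniquely as $f = \sum_i c_i f_i$ in the monomial basis; since $f$ is invariant, averaging gives $f = f^* = \sum_i c_i f_i^*$, placing $f$ in the span. Taking dimensions yields $\dim(\R[V]_m^G) = r$ as well, finishing the argument.

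There is no substantial obstacle here; the argument is essentially combinatorial. The essential ingredient --- and the precise reason the general picture in Illustration~\ref{counterexample} fails --- is that $G$ acts by permutation of the basis elements, which forces symmetrizations of representatives of distinct orbits to have disjoint monomial supports and hence to be linearly independent.
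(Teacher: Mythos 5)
Your argument is correct and follows essentially the same route as the paper: both proofs rest on the observation that, because $G\subset\fS_d$ permutes the monomial basis, symmetrizations of monomials from distinct orbits have disjoint monomial supports and are therefore linearly independent, giving the first equality. The one thing you do in addition is re-derive the identity $\dim(\Span\{f_1^*,\ldots,f_N^*\})=\dim(\R[V]_m^G)$ via the averaging argument, whereas the paper simply invokes the equality already proved in Theorem~\ref{theorem: rank is number of orbits}; this makes your version self-contained but adds no new idea.
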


\begin{proof}
    Let $f_{i_1}^*, \ldots, f_{i_r}^*$ be representatives of all $G-$orbits. As $G$ acts via permutation and each $f_j$ is a monomial, each $f_{i_j}^*$ is a linear combination of monomials, with each monomial in $\R[V]_m$ appearing in exactly one $f_{i_j}^*$. Since all monomials are linearly independent, it follows that any nontrivial linear combination $\sum_{j=1}^r\a_if_{i_j}^*$ is not equal to zero. Thus, the set $\{f_{i_1}^*, \ldots, f_{i_r}^*\}$ is linearly independent and the desired equalities hold.
\end{proof}

\section{The four examples}\label{section: theorem}
Concrete applications of Theorem~\ref{theorem: rank is number of orbits} require a few combinatorial constructions, which we now recall and use to prove the bounds in Examples~\ref{thm:Permutations}-\ref{theorem: point cloud rank}.

\subsection{Partitions and compositions.}\label{sec:notation_part_comp}

Fix some positive integer $j\in \mathbb{N}.$ A \textit{composition} of $j$ is an ordered sequence of strictly positive integers $(\a_1, \a_2, \ldots)$  that sum to $j.$ If the values $\a_i$ are required to only be nonnegative, then the sequence is called a {\em weak composition}. If the number of terms in the sequence is $d$, then the sequence is called a \textit{(weak) composition of $j$ into $d$ parts}. We let the set $C(j)$ consist of all compositions of $j$ and we let $C_d(j)$ consist of all weak compositions of $j$ into $d$ parts. 
The cardinality of these two sets will be written as 
 $$c(j):={\rm Card}~ C(j)\qquad \textrm{and}\qquad c_d(j):={\rm Card} ~C_d(j).$$ 
In particular, $c(j)$ and $c_d(j)$ can be explicitly written as 
$$c(j)=2^{j-1}\qquad \textrm{and}\qquad c_d(j)={j+d-1\choose j}.$$
A \textit{partition of $j$} is a composition of $j$ with ordered elements $\a_1\geq \a_2\geq\ldots.$ A \textit{partition of $j$ into $d$ parts} is defined analogously. We denote by $P(j)$ the set of all partitions of $j$ and by $P_d(j)$ the set of all partitions of $j$ with at most $d$ parts. The cardinality of these two sets will be written as 
 $$p(j):={\rm Card}~ P(j)\qquad \textrm{and}\qquad p_d(j):={\rm Card}~P_d(j).$$
We will use the shorthand $\a\vdash j$ to mean that $\a$ is a partition of $j$, while the number of parts of $\a$ will be denoted by $|\a|$. It is sometimes convenient to identify the partition $\a\vdash j$ as a multiset $\a=\{1^{\mu_1}, 2^{\mu_2}, \ldots, j^{\mu_j}\}.$ We denote the multipilicities of parts of $\a$ by the vector $\mu(\a) = (\mu_1(\alpha), \ldots, \mu_j(\alpha))$ and the product of factorials of the multiplicities by $\eta(\a) = \prod_{i=1}^j\mu_i(\a)!.$

By convention, the number of compositions and partitions of zero is equal to $1$.

\begin{example}
    As a concrete example, $4$ can be partitioned in $5$ distinct ways:
$$
4\qquad 3+1\qquad
2+2\qquad
2+1+1\qquad
1+1+1+1.
$$
Consequently, we have $p_1(4)=1$, $p_2(4)=3$, $p_3(4)=4$, and $p_4(4)=5$. Arbitrarily permuting the elements within each partition yields a composition. Padding each composition with zeros yields a weak composition.
\end{example}

We will now give a simple direct proof of Theorem~\ref{thm:Permutations}. The result also follows from facts about the invariant ring $\R[V]^{{\fS}_d}$ that will require introducing algebraic results that are not necessary in this paper, see for example \cite{fulton-harris}.

\subsection{Verification of Example~\ref{thm:Permutations}}
We plan to apply Theorem~\ref{theorem: rank is number of orbits} and Lemma~\ref{lemma: rank = number of orbits}. Assume that $V = \R^d$ and  fix a monomial basis of $\R[V]_m$. Each monomial in $\R[V]_m$ can be written as $x^{\alpha}:=x_1^{\a_1}x_2^{\a_2}\ldots x_d^{\a_d}$ for some set of nonnegative integers $\a_1, \ldots, \a_d.$ The degree of the monomial is $j:=\sum_{i=1}^d\a_i\leq m$. Since the action of $\fS_d$ on $\R^d$  is given by coordinate permutations, the action of $\fS_d$ of $\R[V]_m$ is given by a permutation of the monomials. Namely, for all $\s\in\fS_d,$ the equality holds:
\begin{align*}
    \s\cdot x_1^{\a_1}x_2^{\a_2}\ldots x_d^{\a_d} &= x_{\s(1)}^{\a_1}x_{\s(2)}^{\a_2}\ldots x_{\s(d)}^{\a_d}=x_1^{\a_{\s(1)}}x_2^{\a_{\s(2)}}\ldots x_d^{\a_{\s(d)}}.
\end{align*}
Consequently, the $\fS_d-$orbit of a degree $j$ monomial $x^\alpha$ can be labeled by a partition of $j$ into at most $d$ parts. Thus, the number of $\fS_d-$orbits of the monomial basis of $\R[V]_m$ is $\sum_{j=0}^m p_d(j)$. Noting $p_d(j)\leq p(j)$ when $d\leq j$ and $p_d(j) = p(j)$ for $d>j$ and applying Theorem~\ref{theorem: rank is number of orbits} completes the proof.

\subsection{Verification of Example~\ref{thm:Set-permutations}}
Recall that we are interested in  the group of permutations $\fS_d$ acting on matrices $X\in\R^{d\times k} = V$ by permuting rows. Fix a monomial basis of $\R[V]_{m}$. It is straightforward to see that each monomial $X^{\beta}$ can be labeled by a matrix of nonnegative integers $\beta\in \R^{d\times k}$ whose $i$'th row $\beta_i$ is a weak composition of some integer $j_i$ into $k$ parts and we have $\sum_{i=1}^d j_i=j\leq m$. By permuting the rows, we may find at least one monomial in the $\fS_d$-orbit of $X^{\beta}$ so that $j_1\geq j_2\geq\ldots\geq j_d.$ Note that when $d\geq m,$ all nonzero entries of $\b$ will be in the first $m$ rows. We identify the row sums $(j_1, \ldots, j_d)$ with a partition of $j$  into at most $d$ parts. Identifying the partition $\a\vdash j$ with the multiset $\a=\{1^{\mu_1}, \ldots, j^{\mu_j}\},$ each $\fS_d-$orbit representative with row sums $\alpha$ corresponds to a distinct choice of $\mu_i$ weak compositions of $i$ for each $i\leq j.$ It follows that the number of $\fS_d-$orbits is given by $\sum_{j=0}^m\sum_{\a\vdash j}\prod_{i=1}^j{c_k(i) + \mu_i(\a)-1\choose \mu_i(\a)}$ and is independent of $d$ for all $d\geq m,$ as claimed. An application of Theorem~\ref{theorem: rank is number of orbits} completes the proof.

\subsection{Verification of Example~\ref{theorem: graphs rank}}

    Each monomial $X^{\beta}\in \R[V]_m$ corresponds to a matrix $\beta\in \R^{d\times d}$  with each entry $\beta_{ij}$ being the exponent of $x_{ij}$ in the monomial. 
    Since every adjacency matrix is symmetric, in each monomial, $x_{ij}$ can be replaced by $x_{ji}$ for all pairs $i,j.$ It follows that we need only consider symmetric matrices of exponents since we can replace the $ij^{th}$ entry of the exponent matrix $\beta$ with $\frac{1}{2}(\beta_{ij}+\beta_{ji})$, making some entries of the matrix odd multiples of $\frac{1}{2}$ and not integers.

    Next, let $j$ be the degree of $X^\beta$ and let $\alpha$ be the vector of row sums of $\beta$, that is $\alpha_i=\sum_{j}\beta_{ij}$. By simultaneously permuting the rows and columns of $X$, equivalently those of $\beta$, we can find a monomial in the $\fS_d-$orbit of $X^\beta$ such that $\alpha$ is ordered, meaning $\alpha_1\geq \alpha_2\ldots\geq \alpha_d$. 
    When $d\geq 2m,$ we must have that $\a_i = 0$ for all $i>2m$ and so we need only consider $2m\times 2m$ matrices. Noting that $2\alpha$ is a partition of $2j$, we deduce that the number of  $\fS_d-$orbits is upper bounded by $\sum_{j=0}^m \sum_{\a\vdash 2j}q_\a$ where $q_\a$ is the number of symmetric non-negative integer matrices with row sums equal to $\a$ %
    and is independent of $d$ when $d\geq 2m.$ An application of Theorem~\ref{theorem: rank is number of orbits} completes the proof.
    
\subsection{Verification of Example~\ref{theorem: point cloud rank}}
    Recall that the data points are matrices $X\in \R^{d\times k}$. The first fundamental theorem of invariant theory for the orthogonal group \cite[Theorem 2.9.A]{weyl1946classical} shows that any degree $m$ polynomial in $\R[V]$ that is invariant under simultaneous orthogonal action on the rows $x_i$ is a degree $m/2$ polynomial of the pairwise inner-products $\langle x_i,x_j\rangle$. That is, any such polynomial  $f(x)=\Tilde{f}(A(x))$ where $A(x)=[\langle x_i,x_j\rangle]\in \R^{d\times d}$. If the polynomial $f$ is in addition invariant under relabeling of the rows, we can find a representing element for its $G$-orbit such that $\Tilde{f}$ is invariant under conjugation by permutations. A completely analogous argument
    to the proof in Theorem~\ref{theorem: graphs rank} shows that the number of such invariants is bounded by $\sum_{j=0}^{m/2} \sum_{\a\vdash 2j}q_{\a}$. Invoking Theorem~\ref{theorem: rank is number of orbits} completes the proof.

\section{Proof of Theorem~\ref{thm:calculate_dim_intro} and Consequences}\label{sec:characters}

 In this section, we will prove Theorem~\ref{thm:calculate_dim_intro}. The arguments will be based on the theory of {\em characters} from representation theory. This is a systematic approach that applies to any finite group or, more generally, to
 any compact Lie group $G$ acting continuously on an Euclidean space.
See \cite{folland-abstract-harmonic-analysis, SerreBook, barry-simon},  for the representation theory facts used in this section. 

Suppose that a group $G$ acts linearly on a vector space $V$. This action can be summarized by a map (group homomorphism) $\rho \colon G \rightarrow \GL(V)$, called a {\em representation of $G$}. By choosing a basis for $V$, we may identify $\rho(g)$ with a matrix $M_g$ so that for any $x \in V$
$$gx := \rho(g)(x) = M_g x.$$

\begin{definition}[Character]
The {\em character} of $\rho$ is the function $\chi:G \rightarrow \R$ defined as $$\chi(g) := \Tr(M_g).$$
\end{definition}

The main use of characters for us is to compute the dimension of the invariant space
$$V^G:=\{x\in V: gx=x ~~\forall g\in G\}.$$
This is the content of the following lemma, which is an easy consequence of \cite[Corollary 2.4.1]{SerreBook}. 

\begin{lemma}[Invariants and characters] \label{lem:character formula}
Let $G$ be a compact Lie group acting linearly and continuously on a vector space $V$. Then,
\begin{equation}\label{eqn:invariant_dim}
    \dim (V^G) = \mathop\E_{g\sim \operatorname{Unif}G} \chi(g),
    \end{equation}
    where the expectation is taken with respect to the Haar probability measure.
\end{lemma}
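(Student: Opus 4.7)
The plan is to study the Reynolds averaging operator $P := \E_g \rho(g) \in \operatorname{End}(V)$, defined coordinatewise via the Haar integral. This integral is well defined because $V$ is finite-dimensional and the matrix-valued map $g \mapsto M_g$ is continuous on the compact group $G$. I will show that $P$ is a linear projection whose image is precisely $V^G$, and then compute its trace in two ways.

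The first step is idempotence. For any $x \in V$, using Fubini on the compact product $G \times G$ together with translation invariance of the Haar measure,
\[
P^{2}x \;=\; \E_h \rho(h)\, \E_g \rho(g)x \;=\; \E_h \E_g \rho(hg) x \;=\; \E_g \rho(g) x \;=\; Px.
\]
On a compact group the left and right Haar measures coincide, so the substitution $g \mapsto h^{-1}g$ preserves $\mu$. The second step is to identify $\operatorname{Image}(P) = V^G$. If $y = Px$, then for every $h \in G$ the same invariance gives $\rho(h)y = \E_g \rho(hg)x = \E_g \rho(g)x = y$, so $y \in V^G$. Conversely, if $x \in V^G$ then $\rho(g)x = x$ for every $g$, and integrating yields $Px = x$, so $V^G \subseteq \operatorname{Image}(P)$. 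Hence $P$ is a linear projection onto $V^G$.

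To finish, recall that the trace of any projection equals its rank, so $\dim V^G = \rank(P) = \Tr(P)$. Since $\Tr$ is a bounded linear functional on the finite-dimensional matrix space and $g \mapsto M_g$ is continuous on the compact group $G$, the trace commutes with the Haar integral:
\[
\dim V^G \;=\; \Tr(P) \;=\; \Tr\!\left(\E_g M_g\right) \;=\; \E_g \Tr(M_g) \;=\; \E_g \chi(g),
\]
which is exactly \eqref{eqn:invariant_dim}.

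There is no essential obstacle in this argument. The only points to watch are the interchange of trace with the Haar integral and of the two Haar integrals in the idempotence step, and both are immediate from linearity of the trace and continuity of the matrix coefficients of $\rho$ on the compact group $G$. In particular, when $G$ is finite the entire proof reduces to a straightforward algebraic averaging identity, which is the only form needed to derive Theorem \ref{thm:calculate_dim_intro}.
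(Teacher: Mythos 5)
Your proof is correct. The paper itself gives no argument for this lemma---it simply states that it is an easy consequence of Serre's Corollary~2.4.1---whereas you have supplied the full standard argument: the Reynolds operator $P = \E_g \rho(g)$ is idempotent by translation invariance of the Haar measure, its image is exactly $V^G$, a linear projection's trace equals its rank, and the trace commutes with the Haar integral by linearity and continuity of the matrix coefficients on the compact group. Each step is sound; this is precisely the argument underlying the cited reference, so you have effectively unfolded what the paper leaves as a citation.
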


In particular, one can in principle estimate $\dim (V^G)$ by sampling elements $g\in G$ according to the Haar measure. In the case when $G$ is a finite group acting linearly on a vector space $V$, the expression \eqref{eqn:invariant_dim} simply becomes 
$$\dim (V^G) = \tfrac{1}{|G|}\sum_{g\in G} \chi(g)=\Tr\left(\tfrac{1}{|G|}\sum_{g\in G} M_g\right).$$

We next aim to apply Lemma~\ref{lem:character formula} in order to compute $\dim (\R[V]^G_m)$.  In order to do so, we need to describe how the representation $\rho$ of $G$ on $V$ promotes to a representation $\rho'$ of $G$ on $\R[V]_m$. 
Let $N$ denote the size of the (monomial) vector space basis of $\R[V]_m$ .
Since $G$ acts linearly on $\R[V]_{m}$  we may write
$$ g\cdot f=M'_g\cdot [f],$$
where $[f]\in \R^{N}$ is the vector of coefficients of $f$ in the monomial basis and $M'_g$ is an $N\times N$ matrix. Applying Lemma~\ref{lem:character formula} therefore yields the expression 
\begin{equation}\label{eqn:poly_lifting}
\dim (\R[V]_m^G) = \mathop\E_{g\sim \operatorname{Unif} G} \Tr(M'_g).
\end{equation}
Let us look at a concrete example that illustrates the expression \eqref{eqn:poly_lifting}.

\begin{example} \label{ex:n=1,d=2,S2}
    Consider the permutation group $G = \mathfrak{S}_2 = \{e,g\}$ acting on $\R^2$, where $e$ is the identity permutation and $g=(12)$. Then the representing matrices are
    $$\rho(e) = \begin{bmatrix} 1 & 0 \\ 0 & 1 \end{bmatrix}, 
    \quad \text{and} \quad \rho(g) = \begin{bmatrix} 0 & 1 \\ 1 & 0 \end{bmatrix}.
    $$
    Picking $m=2$, the space $\R[x_1,x_2]_2 = \textup{span} \{1,x_1,x_2, x_1^2, x_1x_2, x_2^2\}$ has dimension $N = 6$. The  representation $\rho'$ of $G$ on $\R[x_1,x_2]_2$ has the form
    $$M'_e = \begin{bmatrix}
        1 & 0 & 0 & 0 & 0 & 0 \\
        0 & 1 & 0 & 0 & 0 & 0 \\
        0 & 0 & 1 & 0 & 0 & 0 \\
        0 & 0 & 0 & 1 & 0 & 0 \\
        0 & 0 & 0 & 0 & 1 & 0 \\
        0 & 0 & 0 & 0 & 0 & 1,
    \end{bmatrix}, \quad \text{and}\quad M'_g = \begin{bmatrix} 
     1 & 0 & 0 & 0 & 0 & 0 \\ 
     0 & 0 & 1 & 0 & 0 & 0 \\ 
     0 & 1 & 0 & 0 & 0 & 0 \\
     0 & 0 & 0 & 0 & 0 & 1 \\ 
     0 & 0 & 0 & 0 & 1 & 0 \\
     0 & 0 & 0 & 1 & 0 & 0
     \end{bmatrix}.$$
    The character $\chi'$ of $\rho'$ is then given by $\chi'(e)=\Tr(\rho'(e)) = 6$ and $\chi'(g)=\Tr(\rho'(g)) = 2$. By \eqref{eqn:poly_lifting}, we expect $\dim (\R[x_1,x_2]^G_2)=4$. Indeed, a basis for $\R[x_1,x_2]^G_2$ is given by $\{1, x_1+x_2, x_1^2+x_2^2, x_1x_2\}$.
\end{example}

More generally, in order to apply \eqref{eqn:poly_lifting} we 
need to compute the trace of the representing matrix $M'_g$.
This can be explicitly calculated directly from the action of $G$ on $V$. Namely, as shown in \cite[Equation 8]{induced-matrices-1} and \cite[Section 2.12]{induced-matrices-2}, %
the entries of $M'_g$ are indexed by 
pairs of weak compositions $\alpha,\beta\in C_d(k)$ of all integers $k\leq m$ into $d$ parts. In particular, the diagonal entries of $M_g'$ are given by 
\begin{equation}\label{eqn:lift_diag}
    (M'_g)_{\alpha\alpha} = \frac{\per M_g[\a]}{{\eta(\a)}}.
\end{equation}
Here, $M_g(\a)$ is the principle submatrix of $M_g\in \R^{d\times d}$ indexed by $\a\in \R^d$, the symbol $\per M_g[\a]$ denotes the permanent of the matrix  $M_g[\a]$, and $\eta(\a)$ is the product of the factorials of the multiplicities of entries in $\a$. Summing the expressions \eqref{eqn:lift_diag} across $\alpha$ yields the second equality in \eqref{eqn:sophisticated}.

Alternatively, we may express the trace of $M_g'$ as the sum of its eigenvalues. To this end, let $\l_1, \ldots, \l_d$ be the eigenvalues of $M_g.$ Then it is shown in \cite[Section 2.15.14]{induced-matrices-2} and \cite[Section 3]{induced-matrices-1} that the eigenvalues of $M'_g$ are $\l'_\a = \prod_{i=1}^d\l_{i}^{\a_i}$ where $\a\in C_d(k)$ is some weak composition of $k\leq m$ into $d$ parts. Summing the eigenvalues yields the first equality in \eqref{eqn:sophisticated}.

A direct consequence of Lemma \ref{lem:character formula} is a lower bound on the dimension $\dim(\R[V]_m^G)$ of invariant polynomial spaces where $V$ is $d-$dimensional and $G\subset\fS_d$ acts by permuting basis elements of $V$.

\begin{corollary}
    Suppose that the finite group $G\subset \fS_d$ (with identity element 1) acts on the $d-$dimensional vector space $V$ by permuting basis elements. Then, the inequality holds:
    \begin{align*}
        \dim\R[V]_m^G >\frac{\dim\R[V]_m}{|G|} > \frac{d^m}{m^m|G|}.
    \end{align*}
\end{corollary}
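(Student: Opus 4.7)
The plan is to apply Lemma~\ref{lem:character formula} to the induced representation $\rho'$ of $G$ on $\R[V]_m$ and then combine the resulting identity with a lower bound on $\dim\R[V]_m$. The key structural observation is that since $G\subset\fS_d$ acts by permuting a basis of $V$, it also permutes the monomial basis of $\R[V]_m$. Hence each matrix $M'_g$ is a permutation matrix, and its trace $\chi'(g):=\Tr(M'_g)$ is simply the number of monomials fixed by $g$.

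Two elementary counts then drive the first inequality. The identity contributes $\chi'(1)=\dim\R[V]_m$, since $1\in G$ fixes every monomial. Every other group element $g\neq 1$ contributes $\chi'(g)\geq 1$, because the constant monomial is fixed by every element of $G$. Summing and dividing by $|G|$, Lemma~\ref{lem:character formula} gives
\begin{align*}
\dim\R[V]_m^G \;=\; \frac{1}{|G|}\sum_{g\in G}\chi'(g) \;\geq\; \frac{\dim\R[V]_m+(|G|-1)}{|G|} \;>\; \frac{\dim\R[V]_m}{|G|},
\end{align*}
under the implicit (and necessary) assumption $|G|>1$.

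For the second inequality, I would use the standard count $\dim\R[V]_m=\binom{d+m}{m}$ and estimate
\begin{align*}
\binom{d+m}{m}=\frac{(d+1)(d+2)\cdots(d+m)}{m!} \;>\; \frac{d^m}{m!} \;\geq\; \frac{d^m}{m^m},
\end{align*}
where the first step factors the numerator and bounds each factor below by $d$, and the second uses the crude bound $m!\leq m^m$. Dividing by $|G|$ concludes the chain of inequalities.

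Overall, the argument is essentially a direct application of Lemma~\ref{lem:character formula}; there is no serious obstacle once one notices that the $G$-action on monomials is by permutations, making $\chi'(g)$ a fixed-point count. The only minor care point is that the strict inequality $\dim\R[V]_m^G>\dim\R[V]_m/|G|$ fails when $|G|=1$, so the statement is really being asserted under the implicit hypothesis $|G|\geq 2$.
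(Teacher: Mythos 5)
Your proposal is correct and takes essentially the same route as the paper: apply Lemma~\ref{lem:character formula}, observe that each $M'_g$ is a permutation matrix so $\chi'(g)$ is a fixed-point count with $\chi'(1)=\dim\R[V]_m$ and $\chi'(g)\geq 1$ (constant monomial), and then bound the binomial coefficient below by $d^m/m^m$. The only cosmetic difference is at the last step: you use $\dim\R[V]_m=\binom{d+m}{m}$ directly, while the paper drops to the top-degree piece $\binom{m+d-1}{d-1}$ before applying the same elementary estimate; both yield $d^m/m^m$. Your explicit remark that strictness of $\dim\R[V]_m^G>\dim\R[V]_m/|G|$ requires $|G|>1$ is a legitimate caveat that the paper leaves implicit.
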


\begin{proof}
    Note that since 1 is the identity element of $G,$ the action of $1$ on $\R[V]_m$ is represented by the identity matrix and thus $\chi'(1) = \dim\R[V]_m$. As $V$ is $d-$dimensional, the dimension is equal to the number of monomials in $d$ variables with degree at most $m:$ $$\chi'(1) = \dim\R[V]_m = \sum_{j=0}^m{m+d-1\choose d-1}.$$
    Moreover, since $G$ acts by permuting basis elements of $V,$ $G$ acts on $\R[V]_m$ by permuting monomials and thus each element $g\in G$ is represented by some permutation matrix. In particular, $\chi'(g)$ is equal to the number of monomials fixed by $g.$ As all constant monomials are fixed by each element $g\in G,$ the inequality $\chi'(g)\geq 1$ holds for each $g.$ We successively compute
    \begin{align*}
        \dim\R[V]_m^G &= \frac{1}{|G|}\sum_{g\in G}\chi'(g) \\
        &>\frac{1}{|G|}\chi'(1)\\
        &=\frac{1}{|G|}\dim\R[V]_m\\
        &> \frac{1}{|G|}{m+d-1\choose d-1}.
    \end{align*}
    Noting that ${m+d-1\choose d-1}> \frac{d^m}{m^m}$ completes the proof.
    \end{proof}

It follows that in order for the dimension $\R[V]_m^G$ to stabilize, we must have that the size of the group $|G|$ scales at approximately the same rate as $\dim\R[V]_m.$ As another illustration of Lemma \ref{lem:character formula}, we now use it to compute $\dim\R[V]_m^G$ for a cyclic group $G\subset \fS_d$. %

\begin{lemma}[Invariants under cyclic permutations]\label{lemma: cyclic invariant dimension}
    Let $G = \Z/d\Z$ act on $V=\R^d$ by cyclic permutation. Then, assuming $m<d$, the following equality holds: $$\dim(\R[V]_m^G) = 1+\frac{1}{d}\left(\sum_{r:~ s={\rm gcd}(r,d)>1}\sum_{j=1}^{\lfloor ms/d\rfloor}{j+s-1 \choose s-1}\right).$$
\end{lemma}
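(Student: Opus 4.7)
The plan is to apply Lemma~\ref{lem:character formula} directly: since $G = \Z/d\Z$ acts by permuting basis elements of $V$, the induced action on $\R[V]_m$ permutes monomials, so the lifted character $\chi'(r)$ equals the number of degree-$\leq m$ monomials fixed by the $r$-th cyclic shift, for $r \in \{0,1,\dots,d-1\}$. Thus
\begin{equation*}
\dim(\R[V]_m^G) = \frac{1}{d}\sum_{r=0}^{d-1}\chi'(r),
\end{equation*}
and the task reduces to counting fixed monomials for each $r$.

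For a fixed $r$, the cyclic shift $x_i\mapsto x_{i+r\bmod d}$ partitions $\{1,\dots,d\}$ into $s := \gcd(r,d)$ orbits, each of common length $d/s$. A monomial $x^\alpha$ is fixed if and only if $\alpha$ is constant on each orbit, so such monomials are parameterized by $s$-tuples of nonnegative integers $(\beta_1,\dots,\beta_s)$ recording the common exponent on each orbit. The total degree of the corresponding monomial is $(d/s)\sum_{i=1}^s\beta_i$, so the degree-$\leq m$ constraint becomes $\sum_i\beta_i\leq \lfloor ms/d\rfloor$. Counting weak compositions of each fixed total $j$ into $s$ parts gives
\begin{equation*}
\chi'(r) = \sum_{j=0}^{\lfloor ms/d\rfloor}\binom{j+s-1}{s-1}.
\end{equation*}

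Splitting off the $j=0$ term, which contributes $1$ regardless of $r$, yields
\begin{equation*}
\dim(\R[V]_m^G) = \frac{1}{d}\sum_{r=0}^{d-1}1 + \frac{1}{d}\sum_{r=0}^{d-1}\sum_{j=1}^{\lfloor ms/d\rfloor}\binom{j+s-1}{s-1} = 1 + \frac{1}{d}\sum_{r=0}^{d-1}\sum_{j=1}^{\lfloor ms/d\rfloor}\binom{j+s-1}{s-1}.
\end{equation*}
Here the hypothesis $m<d$ enters decisively: whenever $\gcd(r,d)=1$ we have $\lfloor ms/d\rfloor = \lfloor m/d\rfloor = 0$, so the inner sum is empty. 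Consequently the outer sum can be restricted to those $r$ with $s = \gcd(r,d) > 1$, recovering the claimed identity.

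The only potentially delicate step is the orbit analysis in the second paragraph, but this is a standard fact about cyclic group actions on $\{1,\dots,d\}$; otherwise the argument is bookkeeping using the character formula and the elementary stars-and-bars count.
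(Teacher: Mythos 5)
Your argument is correct and follows essentially the same route as the paper: both apply the character formula from Lemma~\ref{lem:character formula} and count fixed monomials by observing that the $r$-th cyclic shift splits $\{1,\dots,d\}$ into $s=\gcd(r,d)$ orbits of length $d/s$, with a fixed monomial determined by one exponent per orbit. The only (cosmetic) difference is that you derive a single unified expression $\chi'(r)=\sum_{j=0}^{\lfloor ms/d\rfloor}\binom{j+s-1}{s-1}$ valid for all $r$ and then split off the $j=0$ term, whereas the paper treats the cases $\gcd(r,d)=1$ and $\gcd(r,d)>1$ separately and phrases the count in terms of weak compositions of $j$ with parts divisible by $\ell=d/s$ (which is just your $\beta$-parameterization after the change of variables $j_i=\ell\beta_i$); your presentation is marginally tidier but the content is identical.
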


\begin{proof}
    Let $\s$ be a $d-$cycle generating $\Z/d\Z.$ Then, the equality
    \begin{align*}
        \dim\R[V]_m^G &= \frac{1}{d}\sum_{r=1}^{d}\chi'(\s^r) 
    \end{align*}
    holds by Lemma \ref{lem:character formula}. So, we need to calculate the trace of each element $\s^r$ acting on $\R[V]_m.$ Note that each element $\s^r$ permutes the monomials in $\R[V]_m$ and so the trace $\chi'(\s^r)$ is equal to the number of monomials fixed by the permutation $\s^r.$ As the monomial 1 is fixed by all permutations, we will now focus on monomials of degree at least 1.

    Note that for the $d-$cycle $\s,$ a monomial is fixed if and only if it is of the form $x_1^\a x_2^\a \ldots x_d^\a$ for some integer $\a.$ This monomial has total degree $d\cdot\a.$ By assumption the dimension $d$ is larger than the degree $m$ of polynomials being considered and so no nontrivial monomials are fixed by $d-$cycles. Thus, $\chi'(\s) = 1.$
    Moreover, for every $r$ with ${\rm gcd}(r,d)=1$ we have that $\s^r$ is also a $d-$cycle and so $\chi'(\s^r) = 1$.

    If ${\rm gcd}(r,d)>1$ the permutation $\s^r = \t_1\ldots\t_s$ decomposes as a product of $s:={\rm gcd}(r,d)$ disjoint cycles of length $\ell:=d/s.$ In order for a monomial to be fixed by $\s^r,$ exponents corresponding to the entries of each cycle $\t_i$ must all be equal and so each monomial of degree $j$ fixed by $\s^r$ corresponds to a weak composition of $j$ into $s$ parts such that every part is divisible by $\ell.$ Let $C_{s}^{\ell}(j)$ be the set of weak compositions of $j$ into $s$ parts divisible by $\ell.$ Note that in order for $C_s^{\ell}(j)$ to be non-empty, we must have that $j$ is divisible by $\ell.$ Using a simple stars and bars argument, we have $|C_s^{\ell}(j)|=|C_s(j/\ell)|={j/\ell + s-1 \choose s-1}.$ It follows that the equality
    \begin{align*}
        \chi'(\s^r) &= 1 + \sum_{j=1}^{m} |C_s^{\ell}(j)| = 1 + \sum_{j=1}^{\lfloor m/\ell \rfloor}{j + s-1 \choose s-1},
    \end{align*}
    holds. Thus, altogether we have that the equality
    \begin{align*}
        \dim \R[V]_m^G &= \frac{1}{d}\sum_{j=1}^d \chi'(\s^j)
        = \frac{1}{d}\left(d + \sum_{r:~ s={\rm gcd}(r,d)>1}\sum_{j=1}^{\lfloor ms/d\rfloor}{j + s-1\choose s-1}\right)
    \end{align*}
    holds as desired.
\end{proof}

We can now use Lemma \ref{lemma: cyclic invariant dimension} to calculate $\dim\R[V]_m^{\Z/d\Z\times\Z/d\Z}$ which bounds the rank of kernels on the space $V=\R^{d\times d\times k}$ of $d\times d$ images. 
Images are represented as 2D arrays $X=(x_{ij})_{ij=1}^d$ of pixels where each entry $x_{ij}$ is a vector in $\R^k$ for some dimension $k.$ For example, grayscale images can be represented as $d\times d$ matrices where the value of each entry determines how dark the corresponding pixel is ($k=1$), and RGB images are represented as 2D arrays with entries in $\R^3$ determining the red, green, and blue values for each pixel ($k=3$).

Setting $k=1$, note that we can capture the effect of translating a $d \times d$ image in an infinite plane by the action of $\Z/d\Z\times\Z/d\Z$ on the rows and columns of the image so that the translated image stays in the original $d \times d$ frame. This is important since 
a kernel may only take in a $d\times d$ dimensional input.
 Functions of images should be invariant under the set of translations since any translation of an image does not change its content. 
 The group of translations is $\Z/d\Z\times\Z/d\Z$ with the 2D cyclic permutation $(\s, \t)\in\Z/d\Z\times\Z/d\Z$ acting on $X=(x_{ij})_{i,j=1}^d$ by $(\s, \t)\cdot X=\s X\t^\T.$ Note that while we concentrate on grayscale images, the same argument could be used to give an analogous bound for the rank of kernels on other spaces of images, such as RGB images.

\begin{theorem}[2D-cyclic permutations]\label{theorem: image rank}
Let the group $G=\Z/d\Z\times\Z/d\Z$ act on matrices $V=\R^{d\times d}$ by conjugation, that is $(\sigma,\tau)\cdot X=\sigma X\tau ^\top.$ Then, the following equality holds:
    \begin{align*}
        \dim\R[V]_m^G &= \frac{1}{d^2}\sum_{r=1}^d\sum_{r'=1}^d\left(\sum_{j=0}^{\lfloor mss'/d^2\rfloor}\sum_{\l\in C_s^{d^2/(ss')}(jss'/d^2)}\prod_{i=1}^s{\l_iss'/d^2 + s' - 1\choose s'-1}\right)
    \end{align*}
    where for each $r, r'$ we define $s:= \gcd(r, d)$ and $s':=\gcd(r', d).$
\end{theorem}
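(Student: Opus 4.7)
The plan is to mirror the proof of Lemma~\ref{lemma: cyclic invariant dimension}, replacing a single cyclic action by the action of a product of two cyclic groups on exponent matrices. Let $\sigma,\tau$ be $d$-cycle generators of the two factors of $G$. By Lemma~\ref{lem:character formula},
\begin{equation*}
\dim\R[V]_m^G \;=\; \frac{1}{d^2}\sum_{r=1}^d\sum_{r'=1}^d \chi'(\sigma^r,\tau^{r'}),
\end{equation*}
where $\chi'$ is the character of the induced $G$-action on $\R[V]_m$. Since this induced action permutes the monomial basis, $\chi'(\sigma^r,\tau^{r'})$ equals the number of monomials $\prod_{i,j} X_{ij}^{\alpha_{ij}}$ fixed by $(\sigma^r,\tau^{r'})$; equivalently, the number of exponent matrices $\alpha\in\N^{d\times d}$ of total weight at most $m$ satisfying $\alpha_{ij} = \alpha_{\sigma^r(i),\tau^{r'}(j)}$ for all $i,j$.

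Next I would analyze the orbit structure of the cyclic subgroup $\langle(\sigma^r,\tau^{r'})\rangle$ acting on the index set $[d]\times[d]$. With $s=\gcd(r,d)$, $s'=\gcd(r',d)$, $\ell=d/s$, and $\ell'=d/s'$, the permutation $\sigma^r$ decomposes into $s$ disjoint cycles $C_1,\ldots,C_s$ each of length $\ell$, while $\tau^{r'}$ decomposes into $s'$ cycles $D_1,\ldots,D_{s'}$ each of length $\ell'$. The fixed-point constraint forces $\alpha$ to be constant on joint orbits of $(\sigma^r,\tau^{r'})$, and each product block $C_a\times D_b$ is partitioned into such orbits whose sizes and multiplicities I determine directly from the cycle lengths. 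A fixed monomial is then specified by assigning a nonnegative integer to each joint orbit, with the total degree equal to the sum of these integers weighted by the corresponding orbit sizes.

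The count of fixed monomials of degree at most $m$ can be organized as a triply nested sum in which the outermost index $j$ tracks a rescaled total degree, the middle sum runs over how $j$ is distributed across the $s$ row blocks (a composition $\l$ into $s$ parts with the appropriate divisibility, giving $\l \in C_s^{d^2/(ss')}(jss'/d^2)$), and the innermost product counts, block by block, the number of ways to further distribute $\l_i$ among the $s'$ column cycles via stars-and-bars, yielding the binomial factor $\binom{\l_i ss'/d^2 + s'-1}{s'-1}$. Plugging the resulting expression into the character average over $G$ then yields the formula.

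The main obstacle I expect is the orbit analysis within a single product block $C_a\times D_b$, since $\ell$ and $\ell'$ need not divide one another; in general each such block decomposes into $\gcd(\ell,\ell')$ joint orbits of size $\mathrm{lcm}(\ell,\ell')=d^2/(ss'\gcd(\ell,\ell'))$. The delicate part of the proof will be the bookkeeping needed to express the resulting count in the stated form --- correctly threading the divisibility factor $d^2/(ss')$ through the middle composition index and the rescaling $\l_i ss'/d^2$ through the inner binomial --- and verifying that the outer summation range $\lfloor mss'/d^2\rfloor$ captures exactly the admissible total degrees.
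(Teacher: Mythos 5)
Your joint-orbit approach is the mathematically correct way to count the fixed monomials, and this is actually the crux of the problem: it does \emph{not} reproduce the paper's proof, and if you carry it through carefully you will find that the formula in the theorem statement undercounts. The paper's argument asserts that for an exponent matrix $\alpha$ to be fixed by $(\sigma^r,\sigma^{r'})$, the rows within each cycle of $\sigma^r$ must be \emph{equal} and the columns within each cycle of $\sigma^{r'}$ must be \emph{equal}. This is strictly stronger than the true fixed-point condition $\alpha_{ij}=\alpha_{\sigma^r(i),\sigma^{r'}(j)}$, which only forces $\alpha$ to be constant on the joint orbits of $\langle(\sigma^r,\sigma^{r'})\rangle$ on $[d]\times[d]$; as you correctly observe, each product block $C_a\times D_b$ of size $\ell\ell'$ splits into $\gcd(\ell,\ell')$ joint orbits of size $\mathrm{lcm}(\ell,\ell')$, not one. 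The paper's condition (constancy on each full block $C_a\times D_b$) corresponds to requiring a single joint orbit per block, which happens only when $\gcd(\ell,\ell')=1$.

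A concrete counterexample: take $d=m=2$ and $(r,r')=(1,1)$, so $\sigma^r=\sigma^{r'}=(12)$ and $s=s'=1$. The paper's formula (reading the inner sum before its reindexing, which itself has a sign-flipped substitution) gives $\chi'(\sigma,\sigma)=1$, since $C_1^{4}(j)$ is empty for $j=1,2$. But the fixed exponent matrices are exactly those with $\alpha_{11}=\alpha_{22}$ and $\alpha_{12}=\alpha_{21}$, and for total degree at most $2$ these are $(0,0)$, $(1,0)$, $(0,1)$, so the correct count is $3$ (the monomials $1$, $X_{11}X_{22}$, $X_{12}X_{21}$, the latter two having non-equal rows yet being fixed). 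The exponent matrix $\mathrm{diag}(1,1)$ is fixed by $(\sigma,\sigma)$ even though its two rows differ, directly contradicting the paper's intermediate claim. Summing over all $(r,r')$, the true value is $\dim\R[V]_2^G=\tfrac14(15+3+3+3)=6$, and a basis of invariants is easy to write down, whereas the paper's formula yields a smaller number.

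So the ``delicate bookkeeping'' you anticipate is not a reindexing subtlety that can be threaded through; it is a genuine mismatch. The correct orbit count yields
$$\chi'(\sigma^r,\sigma^{r'})=\sum_{t=0}^{\lfloor m/\mathrm{lcm}(\ell,\ell')\rfloor}\binom{t+ss'\gcd(\ell,\ell')-1}{ss'\gcd(\ell,\ell')-1},\qquad \ell=d/s,\ \ell'=d/s',$$
which is a single (not triply nested) sum and does not simplify to the stated expression. If you pursue your plan, the place it will break is precisely the step where you attempt to decompose the assignment of orbit weights along row blocks and then along column cycles: because joint orbits within a block are not products of a row choice and a column choice when $\gcd(\ell,\ell')>1$, the count does not factor that way. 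The upshot is that your strategy is sound, but the target formula is not; you would need to replace the theorem statement (and the paper's purported proof) with the orbit-count formula above.
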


\begin{proof}
    Let $\s$ be a $d-$cycle generating $\Z/d\Z.$ Then, the equality
    \begin{align*}
        \dim\R[V]_m^{G} &= \frac{1}{d^2}\sum_{r=1}^d\sum_{r'=1}^d\chi'(\s^r, \s^{r'})
    \end{align*}
    holds by Lemma \ref{lem:character formula}. As in Lemma \ref{lemma: cyclic invariant dimension}, the character $\chi'(\s^r, \s^{r'})$ is equal to the number of monomials in $\R[V]_m$ that are fixed by the pair of permutations $(\s^r, \s^{r'}).$ To each monomial $X^\a$ in $\R[V]_m$ we associate the matrix of exponents $\a$ and $X^\a$ is fixed by $(\s^r, \s^{r'})$ if and only if $\a = M_{\s^r} \a (M_{\s^{r'}})^\T$. So, the number of monomials of degree $j$ that are fixed by $(\s^r, \s^{r'})$ is equal to the number of non-negative matrices whose entries sum to $j$ that are invariant under the action of $(\s^r, \s^{r'}).$

    Let $s = \gcd(r, d)$ and $s'=\gcd(r', d).$ Then, $\s^r = \s_1\ldots\s_s$ decomposes as a product of $s$ disjoint cycles of length $d/s$ and $\s^{r'}$ decomposes as a product of $s'=\t_1\ldots\t_{s'}$ disjoint cycles of length $d/s'.$ In order for a matrix $\a$ to be invariant under $(\s^r, \s^{r'}),$ the rows corresponding to each $\s_i$ must be equal and the columns corresponding to each $\t_i$ must be equal. It follows that the row sums of $\a$ must correspond to a weak composition $\l = (\l_1, \ldots, \l_s)$ of $j$ into $s$ parts divisible by $d/s.$ Then, the rows corresponding to $\s_i$ are all equal, have entries that sum to $\l_i/(d/s)$ and are invariant under column permutations by $\s^{r'}.$ So, each row corresponding to $\s_i$ is a weak composition of $\l_is/d$ into $s'$ parts divisible by $d/s'.$ It follows that $\l_i$ must be divisible by $d^2/(ss')$ and so $\l$ is a weak composition of $j$ into $s$ parts divisible by $d^2/(ss').$ Thus, the total number of degree $j$ monomials fixed by $(\s^r, \s^{r'})$ is $\sum_{\l\in C_s^{d^2/(ss')}(j)}\prod_{i=1}^s C_{s'}^{d/s'}(\l_is/d) = \sum_{\l\in C_s^{d^2/ss'}(j)}\prod_{i=1}^s{\l_iss'/d^2 + s' - 1\choose s'-1}.$ Then, the equality holds:
    \begin{align*}
        \chi'(\s^r, \s^{r'}) &= \sum_{j=0}^m\sum_{\l\in C_s^{d^2/(ss')}(j)}\prod_{i=1}^s{\l_iss'/d^2 + s' - 1\choose s'-1}.
    \end{align*}
    In order for the set $C_s^{d^2/ss'}(j)$ to be non-empty we must have that $j$ is divisible by $d^2/ss'$ and so this is equivalent to:
    \begin{align*}
        \chi'(\s^r, \s^{r'}) &= \sum_{j=0}^{\lfloor mss'/d^2\rfloor}\sum_{\l\in C_s^{d^2/(ss')}(jss'/d^2)}\prod_{i=1}^s{\l_iss'/d^2 + s' - 1\choose s'-1}.
    \end{align*}
    Then, by Lemma \ref{lem:character formula}, we have
    \begin{align*}
        \dim\R[V]_m^G &= \frac{1}{d^2}\sum_{r=1}^d\sum_{r'=1}^d\left(\sum_{j=0}^{\lfloor mss'/d^2\rfloor}\sum_{\l\in C_s^{d^2/(ss')}(jss'/d^2)}\prod_{i=1}^s{\l_iss'/d^2 + s' - 1\choose s'-1}\right)
    \end{align*}
    The proof is complete.
\end{proof}

We now use the equality in Theorem \ref{theorem: image rank} to show that $\dim\R[V]_m^{\Z/d\Z\times\Z/d\Z}$ does not stabilize and is polynomial in $d.$ The right hand side of the equality in Theorem \ref{theorem: image rank} is minimized when $d$ is a prime number and $m$ is less than $d.$ If $d$ is prime and $\s$ generates $\Z/d\Z,$ then for any choice of $r, r'<d,$ the permutations $\s^r$ and $\s^{r'}$ are both $d-$cycles. In order for a monomial $X^\a$ of degree $j$ to be fixed by $(\s^r, \s^{r'})$, all entries in the exponent matrix $\a$ must be equal. However, since we assume that $j<d^2,$ this condition can only be satisfied when $j=0$ and so $\chi'(\s^r, \s^{r'}) = 1.$ If exactly one of $r, r'$ is equal to $d,$ the monomial $X^\a$ is fixed by $(\s^r, \s^{r'})$ is fixed if and only if all rows/columns in $\a$ are equal. Since the degree of the monomial is less than $d,$ this is once again only true when all entries of $\a$ are zero and so $\chi'(\s^r, \s^{r'})=1.$ Thus, $\chi'(\s^r, \s^{r'})>1$ if and only if $r=r'=d$ and so $(\s^r, \s^{r'})$ is the identity. Since every monomial in $\R[V]_m$ is fixed by the identity, $\chi(\s^d, \s^d) = \dim\R[V]_m = \sum_{j=0}^m{j+d^2-1\choose d^2-1}.$ Then, we see that
\begin{align*}
    \dim\R[V]_m^{\Z/d\Z\times\Z/d\Z} &= \frac{1}{d^2}\left(d^2 + \sum_{j=1}^m{j+d^2-1\choose d^2-1}\right) \\
    &> \frac{d^{2m-2}}{m^m}.
\end{align*}
It follows that $\dim\R[V]_m^{\Z/d\Z\times\Z/d\Z}$ is polynomial in $d$ and does not stabilize.

\section{Representation stability and free kernels}\label{sec:stability_background}

We have seen in several interesting and relevant examples that the rank of $G-$invariant kernels becomes dimension-independent because $\dim(\R[V]_m^G)$ is independent of $\dim V$. 
This stabilization can be understood in the context of a broader phenomenon called  \textit{representation stability} \cite{church2013representation, church2014representation}. In this section, we introduce some of the key definitions from the literature on representation stability and show that our examples fit into this framework. 
Importantly, we will be able to parametrize invariant kernels in a way that enables comparing data from different dimensions of interest. As we will see in the next section, this is useful for learning problems where we have access to training data in multiple dimensions and would like to learn a function that can take inputs across dimensions. 

\subsection{Representation stability background.}
The basic objects of study are a sequence of groups $\{G_d\}_{d\in \mathbb{N}}$ acting on a sequence of Euclidean spaces $\{V_d\}_{d\in\mathbb{N}}$, and satisfying certain compatibility conditions. In particular, we assume that $G_d$ acts on $V_d$ by orthogonal transformations. The following is the formal definition; see \eqref{eqn:diag} for the setup. 
\begin{equation}\label{eqn:diag}
    \begin{tikzcd}
    \cdots \arrow[r, shift left=+1, "\gamma_{d-1}"]\arrow[r,leftarrow, shift left=-1, "\gamma_{d-1}^*"'] & V_d \arrow[r, shift left=+1, "\gamma_{d}"]\arrow[r,leftarrow, shift left=-1, "\gamma_{d}^*"'] \arrow[loop above, "G_d"] & V_{d+1} \arrow[r, shift left=+1, "\gamma_{d+1}"]\arrow[r,leftarrow, shift left=-1, "\gamma_{d+1}^*"'] \arrow[loop above, "G_{d+1}"] & V_{d+2} \arrow[r, shift left=+1, "\gamma_{d+2}"]\arrow[r,leftarrow, shift left=-1, "\gamma_{d+2}^*"'] \arrow[loop above, "G_{d+2}"] & \cdots
    \end{tikzcd}
\end{equation}

\begin{definition}[Consistent sequence]
    Let $G_1\subset G_2\subset \ldots$ be a nested sequence of compact groups acting on a sequence of Euclidean spaces $V_1,V_2,\dots$. Suppose moreover, that there exist linear isometries $\gamma_d:V_d\to V_{d+1}$ that are
    $G_d-$equivariant, meaning 
        $$\gamma_d(gx)=g\gamma_d(x)\qquad \forall x\in V_d,~g\in G_d.$$ 
     Then the sequence of pairs $\cV := \{(V_d, \gamma_d)\}_{d\in \mathbb{N}}$ is called a $\{G_d\}$-\textit{consistent sequence}.
\end{definition}

We can identify $V_d$ with $\gamma_d(V_d) \subseteq V_{d+1}$, in which case the orthogonal projection from $V_{d+1}$ onto $V_d$ coincides with the adjoint of $\gamma_d$ \cite[Section 1.1.1]{levin2024any}:  $$\gamma_d^*=\proj_{V_d}\qquad \forall d\in \mathbb{N}.$$ 
We say that a sequence $\{x_d \in V_d\}$ is a \emph{free element} of $\{V_d\}$ if the following holds:
$$x_d = \proj_{V_d}(x_{d+1})\qquad \forall d\in \mathbb{N}.$$ 
Thus, points of a free element in higher dimensions project onto points in lower dimensions.  

Importantly, when the groups $G_d$ are in a sense ``sufficiently large'', the invariant spaces $V_d^{G_d}$ become isomorphic to each other and hence of the same dimension. Let us give a name to this situation.

\begin{definition}[Stability degree]
Given a  consistent sequence $\cV=\{(V_d, \gamma_d)\}_{d\in\N}$, its {\em stability degree} is the smallest  $D\in \N$ such that the maps $\proj_{V_d}$ restrict to an isomorphism between $V_{d+1}^{G_{d+1}}$ and $V_{d}^{G_{d}}$ for all $d\geq D$.
\end{definition}

We will focus on one sufficient condition, from  \cite[Definition 2.2]{levin2024any}, which ensures that the stability degree is finite.
\begin{definition}[Generation degree]
    The consistent sequence $\cV=\{(V_d, \gamma_d)\}_{d\in\N}$ is \emph{finitely generated} if there exists $D \in \N$ such that the equality 
    $$\R[G_d]V_D =V_d\qquad \textrm{holds for all } d\geq D.$$
 The {\em generation degree} of $\cV$ is the smallest such $D.$ 
\end{definition}
Recall that $\R[G_d] = \big\{\sum_{i=1}^r \alpha_i g_i \mid r \in \N, \text{and } \forall i \in [r], \alpha_i \in \R, g_i \in G_d\big\}$ is the set of linear combinations of finitely many elements in $G_d$, while  $\R[G_d] V_D = \left\{\rho \cdot v \mid \rho \in \R[G_d], v \in V_d\right\}$ is the set of all products between $\R[G_d]$ and $V_D.$ Here is a small example. 

\begin{example}\label{ex:generation}
    Take $G_d = \fS_d$ acting by permuting coordinates of $V_d = \R^d$ and let $\gamma_d \colon \R^d \rightarrow \R^{d+1}$ be the zero padding map that sends $x \mapsto (x,0)$. Then,
    the generation degree of $\cV$ is one, since $\R$ is identified with $\text{span}\{e_1\} \subseteq \R^d$ and we can generate any vector $v \in \R^d$ via 
    $
    v = \sum_{i=1}^d v_i g_i \cdot e_1,
    $
    where $g_i$ permutes the first and $i$th coordinates only, i.e., $g_i$ is the transposition $(1i)$. In this example, the stability degree is also one, since $({\R^d})^{\fS_d}$ is the line spanned by the all-ones vector for any $d$.
\end{example}

Importantly, finite generation ensures that the invariants stabilize \cite[Proposition 2.3]{levin2024any}.
\begin{proposition}[Finite generation implies stability] \label{prop:rep-stability}
    If $\cV=\{(V_d, \gamma_d)\}_{d\in\N}$ is a consistent sequence generated in degree $D,$ the orthogonal projections $\proj_{V_d}:V_{d+1}^{G_{d+1}}\to V_d^{G_d}$ are injective for all $d\geq D$ and isomorphisms for large $d.$
\end{proposition}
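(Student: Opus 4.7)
The plan is to establish the two assertions in order: first injectivity of $\proj_{V_d}$ on $V_{d+1}^{G_{d+1}}$ for all $d \geq D$, and then the isomorphism claim for large $d$ via a simple monotonicity argument on dimensions. Throughout, the generation hypothesis $\R[G_{d+1}] V_D = V_{d+1}$, together with the identification of $V_D$ as a subspace of $V_{d+1}$ via the composition $\gamma_d \circ \cdots \circ \gamma_D$, provides the key leverage, and the fact that $\gamma_d^\ast = \proj_{V_d}$ allows me to translate between inner products in $V_{d+1}$ and in $V_d$.

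For injectivity, fix $d \geq D$ and take $v \in V_{d+1}^{G_{d+1}}$ with $\proj_{V_d}(v) = 0$. I will show $v$ is orthogonal to every element of $V_{d+1}$, which forces $v = 0$. Given any $w \in V_{d+1}$, finite generation lets me write $w = \sum_i \alpha_i g_i u_i$ with $\alpha_i \in \R$, $g_i \in G_{d+1}$, and $u_i \in V_D \subseteq V_d \subseteq V_{d+1}$. Since $G_{d+1}$ acts by orthogonal transformations and $v$ is $G_{d+1}$-invariant,
\[
\langle v, g_i u_i \rangle_{V_{d+1}} \;=\; \langle g_i^{-1} v, u_i \rangle_{V_{d+1}} \;=\; \langle v, u_i \rangle_{V_{d+1}}.
\]
Combined with the adjoint relation $\gamma_d^\ast = \proj_{V_d}$, which yields $\langle v, u_i \rangle_{V_{d+1}} = \langle \proj_{V_d}(v), u_i \rangle_{V_d} = 0$ because $u_i \in V_d$, we conclude $\langle v, w \rangle = 0$. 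Hence $v = 0$.

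For the isomorphism claim, observe that the injectivity just established implies $\dim V_{d+1}^{G_{d+1}} \leq \dim V_d^{G_d}$ for all $d \geq D$. Since $\{\dim V_d^{G_d}\}_{d \geq D}$ is a non-increasing sequence of non-negative integers, it must stabilize at some index $d_0 \geq D$. For every $d \geq d_0$, the injective linear map $\proj_{V_d}\colon V_{d+1}^{G_{d+1}} \to V_d^{G_d}$ goes between spaces of equal finite dimension and is therefore automatically an isomorphism.

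The main conceptual obstacle is the injectivity step: one must carefully keep track of the identifications of $V_D$ inside $V_{d+1}$ through the iterated isometric embeddings and exploit both the $G_d$-equivariance of the $\gamma_d$'s and the adjoint characterization of their duals, in order to legitimately move $g_i^{-1}$ across the inner product and then invoke $\proj_{V_d}(v) = 0$. Once injectivity is in hand, the stabilization claim is essentially a pigeonhole argument on non-negative integers and requires no additional structure.
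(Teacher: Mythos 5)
The paper does not give its own proof of this proposition; it simply cites \cite[Proposition 2.3]{levin2024any}. Your argument is correct and self-contained, and appears to be essentially the standard proof. The injectivity step is the heart of the matter, and you handle it properly: you combine (i) orthogonality of the $G_{d+1}$-action, (ii) the isometric embeddings, giving $\gamma_d^\ast = \proj_{V_d}$ and thus $\langle v, u\rangle_{V_{d+1}} = \langle \proj_{V_d} v, u\rangle_{V_d}$ for $u \in V_d$, and (iii) finite generation to reduce an arbitrary $w \in V_{d+1}$ to a combination $\sum_i \alpha_i g_i u_i$ with $u_i \in V_D \subseteq V_d$; invariance of $v$ lets you peel off $g_i^{-1}$ at no cost, and $\proj_{V_d}(v)=0$ finishes the job. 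The passage from injectivity to eventual isomorphism via a non-increasing sequence of non-negative integer dimensions is exactly right and uses finite-dimensionality of the $V_d^{G_d}$, which is available since the $V_d$ are Euclidean. One small point worth being explicit about, although it is already implicit in the proposition's statement of the domain and codomain, is that $\proj_{V_d}$ actually carries $V_{d+1}^{G_{d+1}}$ into $V_d^{G_d}$; this follows from $G_d$-equivariance of $\gamma_d$ (hence of $\gamma_d^\ast$) together with $G_d \subseteq G_{d+1}$, and it would tighten the write-up to record this in a sentence before using the map.
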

We refer the reader to \cite[Appendix B]{levin2024any, levin2023free} for a discussion of a possible gap between the stability and generation degrees, although in our main examples they coincide. 

A direct corollary of Proposition~\ref{prop:rep-stability} is the existence of a \emph{free basis} of invariants.

\begin{lemma}[Free basis]
Let $\cV=\{(V_d, \gamma_d)\}_{d\in\N}$ be a consistent sequence with stability degree $D$. Then there exists $k\in \mathbb{N}$ and a collection of free elements $\{b_i^{(d)}, \dots, b_k^{(d)}\}_{d\in \mathbb{N}}$ such that $b_1^{(d)}, \dots b_k^{(d)}$ forms a basis of $V_d^{G_d}$ for all $d\geq D$.  
\end{lemma}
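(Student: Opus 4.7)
The plan is to construct the basis at dimension $D$ first, then propagate it both upward (via the stability isomorphism) and downward (via the projections $\proj_{V_d}$), ensuring that the resulting sequences satisfy the free element condition $b_i^{(d)} = \proj_{V_d}(b_i^{(d+1)})$ for every $d\in\N$.

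Set $k := \dim V_D^{G_D}$ and fix an arbitrary basis $b_1^{(D)},\dots,b_k^{(D)}$ of $V_D^{G_D}$. For the upward propagation, recall that by definition of the stability degree, the map $\proj_{V_d}\colon V_{d+1}^{G_{d+1}} \to V_d^{G_d}$ is an isomorphism for every $d\geq D$. Inductively, given $b_i^{(d)} \in V_d^{G_d}$, define $b_i^{(d+1)} \in V_{d+1}^{G_{d+1}}$ to be the unique preimage of $b_i^{(d)}$ under this isomorphism. Then $b_i^{(d)} = \proj_{V_d}(b_i^{(d+1)})$ by construction, and since the $b_i^{(d)}$ form a basis of $V_d^{G_d}$ and the map is a linear isomorphism, their preimages $b_i^{(d+1)}$ form a basis of $V_{d+1}^{G_{d+1}}$. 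Iterating gives the desired basis property for all $d \geq D$.

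For the downward propagation, define $b_i^{(d)}$ for $d < D$ by iterated projection, $b_i^{(d)} := \proj_{V_d}(b_i^{(d+1)})$, working from $d = D-1$ down to $d=0$. The one nontrivial point is to check that $b_i^{(d)}$ remains $G_d$-invariant. This uses that $\gamma_d$ is $G_d$-equivariant, so its adjoint $\gamma_d^* = \proj_{V_d}$ is also $G_d$-equivariant; combined with $G_d \subseteq G_{d+1}$, for any $g \in G_d$ we have $g\cdot \proj_{V_d}(b_i^{(d+1)}) = \proj_{V_d}(g\cdot b_i^{(d+1)}) = \proj_{V_d}(b_i^{(d+1)})$, where the last equality uses $b_i^{(d+1)} \in V_{d+1}^{G_{d+1}}$. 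Hence $b_i^{(d)} \in V_d^{G_d}$.

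By construction, the sequence $\{b_i^{(d)}\}_{d\in\N}$ satisfies the free element condition for every $d\in\N$, and the basis property holds for $d\geq D$. The potential obstacle is really bookkeeping — ensuring that the projection maps send invariants to invariants under each $G_d$ — but this is immediate from the $G_d$-equivariance of $\gamma_d$ together with the nesting $G_d \subseteq G_{d+1}$. No deeper input is needed beyond Proposition~\ref{prop:rep-stability} and the definition of stability degree.
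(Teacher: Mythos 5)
Your proof is correct and is essentially the argument the paper has in mind when it states the lemma as a ``direct corollary'' without explicit proof: pick a basis at $d=D$, lift it upward through the inverses of the stability isomorphisms $\proj_{V_d}\colon V_{d+1}^{G_{d+1}}\to V_d^{G_d}$ (which exist for all $d\geq D$ by the very definition of stability degree), and extend downward by projection for $d<D$ to satisfy the free-element condition at every index. One small remark: the invariance check you flag as ``the one nontrivial point'' is not actually required by the lemma --- for $d<D$ the $b_i^{(d)}$ are only required to lie in $V_d$ and satisfy the projection compatibility, not to be $G_d$-invariant or to span anything --- but it is a true and harmless observation, and the equivariance of $\gamma_d^*$ (which does follow from equivariance of $\gamma_d$ and orthogonality of the actions) is worth recording. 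You also correctly note that only the definition of stability degree is used; Proposition~\ref{prop:rep-stability} is not strictly needed for this lemma as stated.
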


In particular, we can parameterize any free invariant element $\{x_d \in V_d^{G_d}\}$ using a finite amount of parameters $\alpha \in \R^k$ by the expression 
$$x_d = \sum^k_{i} \alpha_i b_i^{(d)}\qquad \forall d\geq D,$$
where $D$ is the stability degree. We emphasize that the coefficients $\alpha_i$ fully describe the free-element $\{x_d \in V_d^{G_d}\}$ and are in particular dimension-independent. We will see that this is the underlying reason why polynomial kernels have constant rank in our examples.

\subsection{Representation stabiliy of polynomial invariants and free kernels.}
We now turn to representation stability of polynomial invariants---the main setting of our paper.
To this end, it is elementary to see that any consistent sequence of $\cV=\{(V_d, \gamma_d)\}_{d\in \mathbb{N}}$ induces a consistent sequence of polynomial spaces  $\cF_m = \{(\R[V_d]_m, \gamma_d)\}$ upon setting $\gamma_d(f) := f \circ \proj_{V_d}$, meaning that if $x \in \R[V_d]_m$, then $\gamma_d(f)((x,x_{d+1}) ) = f(x)$. The sequence \eqref{eqn:diag} induces the consistent sequence depicted in \eqref{eqn:diag_poly}.
\begin{equation}\label{eqn:diag_poly}
    \begin{tikzcd}
    \cdots \arrow[r, shift left=+1, "\gamma_{d-1}"]\arrow[r,leftarrow, shift left=-1, "\gamma_{d-1}^*"'] & \mathbb{R}[V_d]_m \arrow[r, shift left=+1, "\gamma_{d}"]\arrow[r,leftarrow, shift left=-1, "\gamma_{d}^*"'] \arrow[loop above, "G_d"] & \mathbb{R}[V_{d+1}]_m \arrow[r, shift left=+1, "\gamma_{d+1}"]\arrow[r,leftarrow, shift left=-1, "\gamma_{d+1}^*"'] \arrow[loop above, "G_{d+1}"] & \mathbb{R}[V_{d+2}]_m \arrow[r, shift left=+1, "\gamma_{d+2}"]\arrow[r,leftarrow, shift left=-1, "\gamma_{d+2}^*"'] \arrow[loop above, "G_{d+2}"] & \cdots
    \end{tikzcd}
\end{equation}
In particular, When $\cF_m$ has stability degree $D$, for all $d \geq D$, there exists a free basis $\{f_1^{(d)}, \dots f_k^{(d)} \in \R[V_d]_{m}^{G_d}\}$ of invariant polynomials. 
In this case, a free element of $\cF_m$ is any sequence $\{f_d \in \R[V_d]_m\}$ satisfying $f_d(x) = f_{d+1}(\gamma_d(x))$ for all $x \in V_d$ and $d\geq D$. The following simple example illustrates this setup.

\begin{example} \label{ex:linear-monomial-basis}
    Recall the setting of \cref{ex:generation} where $G_d = \fS_d$ and $\{V_d = \R^d\}.$ The zero padding map $\gamma_d(x) = (x, 0)$ makes $\{(\R^d, \gamma_d)\}$ a consistent sequence. Then, the polynomial vector spaces $\R[V_d]_m$ also form a consistent sequence when paired with the transition map $\gamma_d(f)(x_1, \dots, x_d, x_{d+1}) = f(x_1, \dots, x_d)$. In the linear case $m = 1$,  the stability degree of the consistent sequence of polynomial spaces is  $D = 1$ and has the free basis $$\left\{f_1^{(d)}(x) = 1, f_2^{(d)}(x) = \sum^d_{i = 1}x_i \right\}.$$
\end{example}

We now return to the settings in Examples~\ref{thm:Permutations}, \ref{thm:Set-permutations}, \ref{theorem: graphs rank}, and \ref{theorem: point cloud rank} and show that the corresponding sequences $(\mathbb{R}[V_d]_m, \gamma_d)$ are consistent and have finite stability degree. The following result can be derived as a corollary of \cite[Theorem A.13]{levin2023free}; however, its statement and proof require additional category theory notions. We include a direct proof for completeness.

\begin{theorem}[Stability degree in the running examples]
 Let $V_d$ and $G_d$ be the vector spaces and groups appearing in Examples~\ref{thm:Permutations}, \ref{thm:Set-permutations}, \ref{theorem: graphs rank}, and \ref{theorem: point cloud rank}. Then, for the 
 zero-padding map $\gamma_d$, the sequences $\mathcal{V}=\{(V_d, \gamma_d)\}_{d\in \mathbb{N}}$ are consistent. Moreover, the stability degree of $\mathcal{F}_m=\{(\mathbb{R}[V_d]_m, \gamma_d)\}_{d\in \mathbb{N}}$ is bounded by $D=m$, $D=m$, $D=2m$, and $D=2m$, respectively. 
\end{theorem}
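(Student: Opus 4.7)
The plan is to split the proof into two parts: verifying consistency of each sequence $\mathcal{V}$, which is routine, and then bounding the stability degree of $\mathcal{F}_m$ by combining an injectivity argument for the restriction map with the dimension counts already established in Examples~\ref{thm:Permutations}--\ref{theorem: point cloud rank}.

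For consistency, in each of the four examples the zero-padding map $\gamma_d$ is a linear isometry because appending zero entries preserves inner products. The group inclusions $G_d \hookrightarrow G_{d+1}$ are the natural embeddings that fix the newly added index: $\fS_d$ sits inside $\fS_{d+1}$ as permutations fixing the new coordinate, and in the point cloud case we include $\fS_d \times O(k) \hookrightarrow \fS_{d+1} \times O(k)$ analogously. Equivariance of $\gamma_d$ then holds since the new coordinate (or row, or vertex) is fixed pointwise by $G_d$ and remains zero under its action.

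For the stability degree, I would show that the restriction map $\gamma_d^* \colon \R[V_{d+1}]_m^{G_{d+1}} \to \R[V_d]_m^{G_d}$ is injective once $d$ exceeds the claimed threshold. Because the dimensions of the two invariant spaces already agree past that threshold by Examples~\ref{thm:Permutations}--\ref{theorem: point cloud rank}, injectivity then automatically upgrades to bijectivity and yields the bound on $D$. The core of the injectivity argument is a ``slot coverage'' observation: suppose $f \in \R[V_{d+1}]_m^{G_{d+1}}$ lies in the kernel of $\gamma_d^*$, so $f$ vanishes whenever the newly appended slot is set to zero. The permutation component of $G_{d+1}$ can move any slot into the $(d+1)$-st position, hence $f$ vanishes whenever any single slot is zeroed out. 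Equivalently, for each slot $i$, every monomial of $f$ must contain at least one variable incident to slot $i$, so each monomial must collectively cover all $d+1$ slots.

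The argument then concludes by counting incidence. For Examples~\ref{thm:Permutations} and \ref{thm:Set-permutations}, each variable $x_i$ or $X_{ij}$ is incident to a single slot (coordinate or row), so a degree $m$ monomial covers at most $m$ slots; the requirement that it cover $d+1$ distinct slots is impossible whenever $d \geq m$, forcing $f = 0$. For Example~\ref{theorem: graphs rank}, each symmetric matrix variable $X_{ij}$ is incident to at most two vertices, so coverage is at most $2m$ and $f = 0$ whenever $d \geq 2m$. For Example~\ref{theorem: point cloud rank}, I would first apply the first fundamental theorem of invariant theory for the orthogonal group to express $f$ as a polynomial of degree at most $m/2$ in the Gram variables $Y_{ij} = \langle x_i, x_j \rangle$, and then rerun the graph argument on these new variables; this already gives injectivity for $d \geq m$, and combined with the dimension stabilization at $d \geq 2m$ from Example~\ref{theorem: point cloud rank} it yields $D = 2m$. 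The main obstacle I anticipate is exactly this point cloud case: one must carefully verify that the reduction through the Gram matrix preserves both the permutation invariance and the vanishing conditions, so that the slot coverage argument actually runs on the Gram variables $Y_{ij}$ with the correct effective degree.
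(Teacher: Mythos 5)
Your overall strategy is sound and shares the key structural ingredients with the paper's proof (verify consistency; establish injectivity of $\gamma_d^*$ on invariants past the threshold; combine with the dimension-stabilization statements of Examples~\ref{thm:Permutations}--\ref{theorem: point cloud rank} to upgrade injectivity to bijectivity). However, you take a genuinely different route to injectivity. The paper proves that each sequence $\cF_m$ is \emph{finitely generated} in degree $D$ --- every monomial in $\R[V_{D+1}]_m$ lies in $\R[G_{D+1}]\R[V_D]_m$, shown by exhibiting a transposition that moves the occupied new slot into an empty old slot --- and then invokes Proposition~\ref{prop:rep-stability} to deduce injectivity of $\gamma_d^*$ for $d\geq D$. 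Your ``slot coverage'' argument instead proves injectivity directly: an invariant $f$ in $\ker \gamma_d^*$ must vanish on every hyperplane where a slot is zeroed (by invariance), hence every monomial of $f$ lies in the monomial ideal of each slot, hence covers all $d+1$ slots; a degree-$m$ monomial covers at most $m$ (resp.\ $2m$) slots, forcing $f=0$. This is the contrapositive of the same counting observation the paper uses in its generation step, but it reaches injectivity without the machinery of $\R[G_d]$-modules and Proposition~\ref{prop:rep-stability}. Both are valid; your route is somewhat more elementary, while the paper's route yields the stronger structural statement of finite generation, which is used elsewhere.

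Regarding your flagged concern about the point cloud case: the gap you anticipate is real if you insist on passing through the Gram variables. Since the map $X\mapsto XX^\top$ has image only the rank-$\leq k$ PSD matrices, the polynomial $\tilde f$ with $f(X)=\tilde f(XX^\top)$ is not unique when $k<d+1$, and vanishing of $f$ on $\{x_{d+1}=0\}$ only forces $\tilde f$ to vanish on the rank-$\leq k$ PSD matrices with zero last row and column, not on the full linear subspace; your slot-coverage argument for $\tilde f$ therefore does not obviously apply. The fix is simpler than you suggest, and is in fact exactly what the paper implicitly does: run the slot-coverage argument directly on the matrix variables $X_{ij}$, ignoring the $O(k)$-invariance entirely. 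A degree-$\leq m$ monomial in $X_{ij}$ touches at most $m$ rows, so any $\fS_{d+1}$-invariant $f$ vanishing whenever a row is zeroed must be zero once $d\geq m$. This already gives injectivity for $d\geq m$, and the stated bound $D\leq 2m$ then comes solely from the fact that the dimension count in Example~\ref{theorem: point cloud rank} is only asserted to stabilize for $d\geq 2m$. (Note the same is true of the paper's generation argument for point clouds: it proves generation in degree $m$, and the bound $2m$ arises from the dimension threshold, not from the generation step.)

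One small caution on the graph and point-cloud thresholds you should make explicit: the injectivity argument hands you $\dim \R[V_{d+1}]_m^{G_{d+1}}\leq \dim \R[V_d]_m^{G_d}$, so the sequence of dimensions is nonincreasing past the injectivity threshold; you still need the Examples' assertion that it is \emph{constant} for $d\geq 2m$ (resp.\ $d\geq m$) to conclude bijectivity and hence the claimed bound on $D$. You do invoke this, so the proof is complete, but it is worth stating that the Examples assert dimension \emph{stabilization}, not merely a $d$-independent upper bound.
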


\begin{proof} %
We first prove that each sequence $\cV = \{(V_d, \gamma_d)\}_{d\in\N}$ is consistent. By definition, $G_1\subset G_2\subset \ldots$ are sequences of compact groups acting linearly on the Euclidean spaces $\{V_d\}.$ The zero-padding map $\gamma_d$ is a linear isometry, and thus we need only check that $\gamma_d$ is $G_d-$equivariant. In each of the sequences, the zero-padding map $\gamma_d$ is equal to $\gamma_d = \id_d\oplus 0$ where $\id_d$ is the identity map on $V_d.$ Moreover, as the group $G_d$ is a subgroup of $G_{d+1},$ the action of $G_d$ on $V_{d+1}$ is given by $g\cdot x = (g\oplus\id_1)(x)$ for all $g\in G_d$ and $x\in V_{d+1}.$ Then, for any $x\in V_d,$ the equalities $\gamma_d(g\cdot x) = g\cdot x\oplus 0 = (g\oplus\id_1)(x\oplus 0) = g\cdot \gamma_d(x).$ It follows that $\gamma_d$ is $G_d-$equivariant. 

Now, we show that each consistent sequence $\cF_m = \{(\R[V_d]_m, \gamma_d)\}_{d\in\N}$ is generated in dimension $D,$ i.e. for all $d\geq D,$ the equality $\R[V_d]_m = \R[\fS_d]\R[V_D]_m.$ %
Note that if any consistent sequence is generated in dimension $D,$ it is also generated in dimension $d\geq D$ and thus we need only show that $\R[V_{D+1}]_m = \R[\fS_{D+1}]\R[V_D]_m.$ Since any polynomial is a linear combination of monomials, it suffices to show that for each monomial $f_{D+1}\in\R[V_{D+1}]_m$ there exists a group element $g\in G_{D+1}$ such that $f_{D+1} = g\cdot f_D$ for some monomial $f_D\in\R[V_D]_m.$

First, consider the sequence $\{(V_d, \gamma_d)\}_{d\in\N}$ appearing in Example~\ref{thm:Permutations}. Let $f_{m+1} = \prod_{i=1}^{m+1}x_i^{a_i}$ be a monomial in $\R[V_{m+1}]_m.$ If $a_{m+1} = 0,$ then $f_{m+1}\in\R[V_m]_m$ so assume that $a_{m+1}>0.$ If $a_{m+1}>0,$ there exists $j\leq m$ such that $a_j = 0.$ Define the monomial $f_m = \left(\prod_{i=1}^mx_{i}^{a_i}\right)\cdot x_j^{a_{m+1}}\in\R[V_m]_m$ and let $\s\in\fS_{m+1}$ be the transposition defined by $\s(j) = m+1$ and $\s(m+1) = j.$ Then, $f_{m+1} = \s\cdot f_m$.

Now, consider the sequence $\{(V_d, \gamma_d)\}_{d\in\N}$ appearing in Example~\ref{thm:Set-permutations}. Recall that data points in $V_d$ are matrices $X\in\R^{d\times k}.$ Let $f_{m+1} = \prod_{i=1}^{m+1}X_i^{A_i}$ be a monomial in $\R[V_{m+1}]_m,$ where each $X_i$ is the vector $[x_{i1}, \ldots, x_{ik}]$ and $A_i$ is the vector $[a_{i1}, \ldots, a_{ik}]$ of exponents. Then $\sum a_{ij} = m$. As in the previous example, if $A_{m+1}$ is the zero vector,  $f_{m+1}\in\R[V_m]_m.$ Suppose that $A_{m+1}$ is not the zero vector. Then, there exists some $j\leq m$ such that $A_j$ is the zero vector. Define the monomial $f_m = \left(\prod_{i=1}^m X_i^{A_i}\right)\cdot X_j^{A_{m+1}} \in\R[V_m]_m$ and let $\s\in\fS_{m+1}$ be the transposition defined by $\s(j) = m+1$ and $\s(m+1) = j.$ Then, the equality $f_{m+1} = \s\cdot f_m$ holds.

Consider the sequence $\{(V_d, \gamma_d)\}_{d\in\N}$ appearing in Example~\ref{theorem: graphs rank}. Let $f_{2m+1} = \prod_{i=1}^{2m+1}X_i^{A_i}$ be a monomial in $\R[V_{2m+1}]_m.$ Since elements of $V_d$ are symmetric matrices, we assume without loss of generality that the matrix $A$ with rows $A_i$ is also symmetric. If $A_{2m+1}$ is the zero vector, we have that $f_{m+1}\in\R[V_{2m}]_m.$ Suppose that $A_{2m+1}$ is not the zero vector. Then, we have that there exists $j\leq 2m$ such that $A_j$ is the zero vector. As in the previous arguments, define the monomial $f_{2m} = \left(\prod_{i=1}^{2m} X_i^{A_i}\right)\cdot X_j^{A_{2m+1}} \in\R[V_{2m}]_m$ and let $\s\in\fS_{2m+1}$ be the transposition defined by $\s(j) = 2m+1$ and $\s(2m+1) = j.$ Then, the equality $f_{2m+1} = \s\cdot f_{2m}$ holds.

Finally, consider the sequence $\{(V_d, \gamma_d)\}_{d\in\N}$ appearing in Example~\ref{theorem: point cloud rank}. Recall that the data points are matrices $X\in\R^{d\times k}$ and the group $G_{d}$ consists of both simultaneous orthogonal transformations on the rows of $X$ and permutations of the rows of $X.$ Let $f_{2m+1} = \prod_{i=1}^{2m+1}X_i^{A_i}$ be a monomial in $\R[V_{2m+1}]_m,$ where each $X_i$ is the vector $[x_{i1}, \ldots, x_{ik}]$ and $A_i$ is the vector $[a_{i1}, \ldots, a_{ik}]$  of exponents. As in the previous example, if $A_{2m+1}$ is the zero vector, $f_{2m+1}\in\R[V_{2m}]_m.$ Suppose that $A_{2m+1}$ is not the zero vector. Then, there exists some $j\leq 2m$ such that $A_j$ is the zero vector. Define the monomial $f_{2m} = \left(\prod_{i=1}^{2m} X_i^{A_i}\right)\cdot X_j^{A_{2m+1}} \in\R[V_{2m}]_m$ and let $\s\in G_{m+1}$ be the transposition defined by $\s(j) = m+1$ and $\s(m+1) = j.$ Then, the equality $f_{2m+1} = \s\cdot f_{2m}.$ holds.

Now, since each of the sequences are generated in dimension $D=m, D=m, D=2m,$ and $D=2m$ respectively, by Proposition \ref{prop:rep-stability}, the orthogonal projections $\proj_{\R[V_d]_m}:\R[V_{d+1}]_m^{G_{d+1}}\to \R[V_d]_m^{G_d}$ are injective for all $d\geq D.$ However, by Theorems \ref{thm:Permutations}, \ref{thm:Set-permutations}, \ref{theorem: graphs rank}, and \ref{theorem: point cloud rank}, we have that the dimensions $\dim \R[V_{d}]_m^{G_d}$ are constant for $d\geq D$ and so the orthogonal projections $\proj_{\R[V_d]_m}$ are isomorphisms. It follows that the stability degree of each sequence $\cF_m$ is at most $D$ as desired.
\end{proof}

Typical kernels, including those that are polynomials in the inner product, can be used to compare data across different dimensions. For example, the polynomial kernel $K(x,y)=(1+x^\top y)^m$ can be used to compare data points $x\in \R^{d_1}$ and $y\in\R^{d_2}$ with $d_1\leq d_2$ by simply zero-padding $x$ into a vector in $\R^{d_2}$.
Formally, we define a \textit{free kernel} as follows.

\begin{definition}[Free Kernel]
    Let $\mathcal{K} = \{K_d:V_d\times V_d\to\R\}_{d\in\N}$ be a sequence of kernels and let $\mathcal{V}=\{(V_d, \gamma_d)\}_{d\in\N}$ be a consistent sequence. We say that $\mathcal{K}$ is a \textit{free kernel} if for any dimension $d$ and any choice of $x,y\in V_d,$ the equality $K_d(x, y) = K_{d+1}(\gamma_d(x), \gamma_d(y))$ holds. Moreover, $\mathcal{K}$ is a free $G-$invariant kernel if in addition, each $K_d$ is $G_d-$invariant.
\end{definition}

Let $\mathcal{K}$ be a free kernel. Recall that Lemma~\ref{lemma: C is unique} showed that for each $d$, we may write 
$$K_d(x,y)=F_d(x)^\top C_d F_d(y),$$
where $F_d=(f^{(d)}_1,\ldots,f^{(d)}_k)^\top$ stacks some basis for $\mathbb{R}[V_d]_m$, the integer $k$ is bounded by the rank of $K_d$, and $C_d\in \mathbb{R}^{k\times k}$ is a symmetric matrix. Now if $\{(\R[V_d]_m,\gamma_d)\}_{d\in \mathbb{N}}$ is a consistent sequence with finite stability degree $D$, then by switching basis, we may ensure that $f^{(d)}_1,\ldots,f^{(d)}_k$ is a free basis of $\mathbb{R}[V_d]_m$. The following theorem shows that upon this change of basis the matrix $C_d$ becomes independent of $d$. In words, this means that free kernels admit a free description.

\begin{lemma}[Free kernels admit a free parametrization]\label{lem:uniqueC}
    Suppose that $\mathcal{V}=\{(V_d,\gamma_d)\}$ is a consistent sequence and that the induced consistent sequence $\mathcal{F}_m=\{(\R[V_d]_m,\gamma_d)\}$ has stability degree $D$. Let $\mathcal{K}=\{K_d\}_{d\in \mathbb{N}}$ be any free $G-$invariant polynomial kernel of degree at most $m.$ Then, 
    for any free basis $\F_d = (f_1^{(d)}, \dots, f_k^{(d)})^\top$  for $\mathcal{F}_m$,
    there exists a symmetric matrix $C \in \R^{k \times k}$ such that for all $d \geq D,$ we can write 
    \begin{equation}
        K_d(x, y) = \F_d(x)^\top C \F_d(y) \quad \text{for all }x, y \in V_d.
    \end{equation}
\end{lemma}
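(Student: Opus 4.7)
The plan is to establish the lemma in three steps: existence of a symmetric matrix $C_d$ satisfying the bilinear representation for each $d \geq D$, uniqueness of such $C_d$, and then dimension-independence by exploiting the compatibility of the free kernel with the free basis.

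For existence, fix $d \geq D$ and observe that for each $y \in V_d$ the polynomial $x \mapsto K_d(x,y)$ is $G_d$-invariant and of degree at most $m$ in $x$, hence lies in $\R[V_d]_m^{G_d}$. Expanding in the free basis gives $K_d(x,y) = \sum_{i=1}^k c_i(y)\, f_i^{(d)}(x)$ for uniquely determined coefficients $c_i(y)$. Choosing $x_1, \dots, x_k \in V_d$ so that the matrix $(f_i^{(d)}(x_j))_{i,j}$ is invertible (possible by linear independence of the $f_i^{(d)}$), each $c_i(y)$ is a fixed linear combination of $K_d(x_1, y), \dots, K_d(x_k, y)$, so by the analogous invariance argument in the second variable, $c_i$ itself lies in $\R[V_d]_m^{G_d}$. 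Expanding $c_i(y) = \sum_j C_{d,ij}\, f_j^{(d)}(y)$ then yields a matrix $C_d$ with $K_d(x,y) = F_d(x)^\top C_d F_d(y)$; symmetry of $C_d$ follows from $K_d(x,y) = K_d(y,x)$ together with the uniqueness below.

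Uniqueness is routine: if $F_d(x)^\top C F_d(y) = F_d(x)^\top C' F_d(y)$ for all $x, y \in V_d$, then the linear independence of $f_1^{(d)}, \dots, f_k^{(d)}$ implies that $\{F_d(x) : x \in V_d\}$ linearly spans $\R^k$, forcing $C = C'$.

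For the dimension-independence step—the crux of the lemma—fix $d \geq D$ and $x, y \in V_d$. Freeness of the kernel gives $K_d(x,y) = K_{d+1}(\gamma_d(x), \gamma_d(y))$, while the free basis identity $f_i^{(d)}(x) = f_i^{(d+1)}(\gamma_d(x))$ gives $F_d(x) = F_{d+1}(\gamma_d(x))$ and likewise for $y$. Combining,
\begin{equation*}
F_d(x)^\top C_d F_d(y) = K_d(x,y) = K_{d+1}(\gamma_d(x), \gamma_d(y)) = F_d(x)^\top C_{d+1} F_d(y),
\end{equation*}
and uniqueness forces $C_d = C_{d+1}$. Hence the common value $C := C_D$ works for every $d \geq D$. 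The main subtlety is the existence step: one must verify that the $G_d$-invariant kernel $K_d$ admits a bilinear representation directly against the invariant free basis $F_d$, rather than against the larger (non-invariant) monomial basis of $\R[V_d]_m$ used in Lemma~\ref{lemma: C is unique}. Once this refined representation is in hand, the remaining steps reduce to short algebraic manipulations.
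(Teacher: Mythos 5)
Your argument is correct, and the dimension-independence step—freeness of the kernel plus freeness of the basis plus uniqueness—matches the paper's proof exactly. Where you genuinely depart is in the existence step. The paper simply invokes Lemma~\ref{lemma: C is unique} (which produces a matrix with respect to a basis of the full polynomial space $\R[V_d]_m$, not the $k$-dimensional invariant subspace) and then asserts ``upon switching basis, we may assume $f_1^{(d)},\dots,f_k^{(d)}$ is a free basis,'' leaving implicit why the representation collapses onto the invariant basis alone. You fill in this gap directly: since $K_d$ is $G_d$-invariant, for fixed $y$ the polynomial $x\mapsto K_d(x,y)$ lies in $\R[V_d]_m^{G_d}$, so it expands in the free basis with coefficients $c_i(y)$; choosing evaluation points $x_1,\dots,x_k$ with $(f_i^{(d)}(x_j))$ invertible expresses each $c_i$ as a linear combination of the invariant polynomials $K_d(x_j,\cdot)$, so $c_i\in \R[V_d]_m^{G_d}$ as well, and a second expansion produces $C_d$. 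This buys a self-contained derivation of the $k\times k$ bilinear representation that doesn't route through the ambient monomial basis at all, which is cleaner and also tightens up a point the paper's proof treats tersely. The uniqueness and symmetry arguments you give are standard and correct.
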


\begin{proof}
    By Lemma~\ref{lemma: C is unique}, for each $d$ there exist some symmetric matrices $C_d \in \R^{k \times k}$ such that $K_d(x,y) = \F_d(x)^\top C_d \F_d(y)$ for all $x , y \in V_d$, and $\F_d = (f_1^{(d)}, \dots, f_k^{(d)})^\top$ stacks a basis for $\R[V_d]_m.$ Upon switching basis, we may assume that $f_1^{(d)}, \dots, f_k^{(d)}$ is a free basis.
    Next we show that the matrices $C_d$ are equal for all $d\geq D$.   %
    Since $\mathcal{K}$ is a free kernel, for all choices of $x, y$ in $V_d$ the equality $K_d(x, y) = K_{d+1}(\gamma_d(x), \gamma_d(y))$ holds. Thus, we conclude
    \begin{equation*}
        \F_d(x)^\T C_d \F_d(y)= K_d(x, y)  = \F_{d+1}(\gamma_d(x))^\T C_{d+1} \F_{d+1}(\gamma_d(y)).
    \end{equation*}
    Moreover, due to the freeness of the polynomial basis $\{F_d\}_{d\in \mathbb{N}}$,  $\F_{d+1}(\gamma_d(x)) = \F_d(x)$ for all $x\in V_d,$ and therefore 
    \begin{align*}
        \F_{d+1}(\gamma_d(x))^\T C_{d+1} \F_{d+1}(\gamma_d(y)) &= \F_{d}(x)^\T C_{d+1} \F_{d}(y).
    \end{align*}
    It follows that $C_{d+1} = C_d=\ldots=C_D$ for all $d\geq D$, as claimed. 
\end{proof}

\section{Generalization of polynomial regression across dimensions}
\label{sec:generalization}

Standard regression problems seek to determine a function $p^*$ on a vector space $V$ from finitely many noisy observations of the form 
$y_i=p^*(x_i)+\varepsilon_i$. In this section, we consider the problem of learning invariant polynomials from data generated {\em across different dimensions}.
Setting the stage, consider a sequence of groups $\cG = \{G_d\}_{d\in \mathbb{N}}$ acting on a sequence of vector spaces $\{V_d\}_{d\in \mathbb{N}}$.
Suppose that there exists a sequence of (latent) invariant polynomials
$$p_d^{\star} \in \R[V_d]_m^{G_d}\qquad\forall d\geq 1,$$
which generate the observation model
$$y = p_d^*(x) + \varepsilon\qquad \textrm{for}~x\in \R^d.$$
We are provided a training sample $S = \bigcup_{d=1}^K S_{d}$ from this model, where each set $S_d=\{(x_i, y_i)\} \subseteq V_d \times \R$ consists of feature/label pairs in a fixed dimension and the noise values $\varepsilon_i$ are independent and identically distributed with $\EE[\varepsilon]=0$ and $\EE[\varepsilon^2]=\sigma^2$. We will focus on the fixed design setting wherein the feature vectors $x_i$ are fixed, i.e. not random.
 We let $n = |S|$ be the total number of training examples and $X = \{x\}_{(x, y) \in S}$ be the collection of feature vectors (in potentially different dimensions) of the training set.
Our goal is to predict $p^*_{\bar d}$  from $S$ for some arbitrary dimension $\bar d$. Clearly, without making further assumptions, this goal is untenable since the invariants $p_d^{\star}$ may bear no resemblance to $p_{\bar d}^{\star}$ for any $d\neq \bar d$.
 
 In order to make the problem tractable, we assume that $\mathcal{V}=\{(V_d, \gamma_d)\}$ is a consistent sequence relative to $\cG$ for some transition maps $\gamma_d$ and that the sequence of invariant spaces $\cF_m = \{(\R[V_d]^{G_d}_{m},\gamma_d)\}_{d\in \mathbb{N}}$ has a finite stability degree $D<\infty$. Without loss of generality, we will assume $D=1$, since we can always truncate the sequence to begin at $d=D$. In particular, $\cF_m$ admits a free basis $\{\varphi_1^{(d)}, \dots \varphi_k^{(d)} \in \R[V_d]_{m}^{G_d}\}_{d\in \mathbb{N}}$ of invariant polynomials. Thus for all $d\geq 1$ the following two key properties hold:
    \begin{enumerate}
        \item (\textbf{Basis}) The collection $\varphi_1^{(d)}, \dots, \varphi_k^{(d)}$ forms a basis for $\cF_{m}^d = \R[V_d]_{m}^{G_d}$.
        \item (\textbf{Consistency}) Equality holds:
        $$\varphi_i^{(d)}(x)=\varphi_i^{(d+1)}(\gamma_d(x))\qquad \forall x\in V_d,~i \in \{1, \dots, k\}.$$
    \end{enumerate}
In order to link the invariants $p^*_d$ across dimensions, we will assume that $\{p^*_d\}_{d\in \mathbb{N}}$ is a free element of $\mathcal{F}_m$. Thus, we may write 
$$p^*_d=\sum_{i=1}^k \alpha_i^* \varphi_i^{(d)}\qquad \forall d\geq 1$$
for some vector $\alpha^*\in \R^k$.
Henceforth, with a slight abuse of notation, we will drop the dimension $d$ index and simply write $\varphi(x) = (\varphi_1(x), \ldots, \varphi_k(x))^\top$ as the dimension index can be inferred from the dimension of $x$. 
Define now the matrix %
 \begin{equation}\label{eq:training-gram}
\Phi(X) = (\varphi(x_1), \dots, \varphi(x_n)) \in \R^{n \times k}. \end{equation}
We assume throughout that $\Phi(X)$ has rank $k$, which in particular entails $n\geq k$. %

With this notation, we may write $p^*_d(x)=\langle \varphi_d(x),\alpha^*\rangle$ for each $d\geq 1$ and $x\in V_d$. Therefore, we aim to estimate the free element $\{p_d^*\}_{d\in \mathbb{N}}$  by solving the regression problem: %
\begin{equation}\label{eqn:basic_soln_reg}
\mathop{\min}_{\alpha \in \R^k} \frac{1}{2n} \sum_{i = 1}^n \left(\dotp{\varphi(x_i), \alpha} - y_i \right)^2.
\end{equation}
Note that the solution of \eqref{eqn:basic_soln_reg} is simply:
\begin{equation} \label{eq:GOAT}
    \widehat \alpha = \left(\Phi(X)^\top \Phi(X)\right)^{-1}\Phi(X)^\top y.
\end{equation}
The choice $\hat \alpha$ of coefficients induces a sequence of invariant polynomials 
$$\widehat p_d(x) := \dotp{\varphi_d(x), \alpha}\qquad \forall~d\in \mathbb{N}.$$ %
Once more, we will drop the dimension $d$ index for simplicity. We evaluate the performance of this sequence of polynomials using finite test data in a potentially different dimension $T := 
\left\{\left(x_i, y_i\right)\right\} \subseteq \bigcup_{d=1}^\infty V_{d} \times \R $ where once more $y_i = p^{\star}(x_i) + \varepsilon_i$ with i.i.d. noise $\varepsilon_i.$
Given any estimator $\widehat \alpha,$ define the risk
$$R_{T}^{(\alpha^{\star})}(\widehat \alpha)  := \frac{1}{|T|}  \sum_{(x, y) \in T} \EE \left( \widehat p(x) - y  \right)^2,$$
where the index $\alpha^{\star}$ indicates the latent sequence of polynomial generating the labels $y$.
Our goal is to establish a bound on the excess risk, 
 $R_{T}(\widehat \alpha) - \mathop{\min}_{\alpha \in \R^k} R_T(\alpha).$
To this end, let us introduce a notion of Gram matrices that will play a crucial role: for a finite set $\Delta \subseteq \bigcup_{d=1}^\infty V_d$, define the Gram matrix 
$$\Sigma_{\Delta} = \frac{1}{|\Delta|} \sum_{(x, y) \in \Delta} \varphi(x)\varphi(x)^\top.$$
All proofs of results in this section appear in Appendix~\ref{app:generalization}.
\begin{theorem}[\textbf{Upper bound}]\label{thm:bound_estimator}
 The estimator $\widehat \alpha$ in \eqref{eq:GOAT} is unbiased with excess risk:
    \begin{equation}
        R_{T}^{(\alpha^{\star})}(\widehat \alpha) - \mathop{\min}_{\alpha \in \R^k} R_{T}^{(\alpha^{\star})}(\alpha) = \frac{\sigma^2}{n} \Tr\left(\Sigma_S^{-1}\Sigma_T\right).
    \end{equation}
\end{theorem}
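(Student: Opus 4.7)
The plan is to express $\widehat\alpha$ as $\alpha^{\star}$ plus a linear function of the training noise, decompose the test risk into a bias-plus-variance form, and then evaluate the variance via the standard identity $\EE[\varepsilon^\top M \varepsilon] = \sigma^2 \Tr(M)$ for quadratic forms in i.i.d.\ mean-zero noise with variance $\sigma^2$. First I would write $y = \Phi(X)\alpha^{\star} + \varepsilon$ for the training labels and substitute into \eqref{eq:GOAT} to obtain
\[
\widehat\alpha - \alpha^{\star} = \bigl(\Phi(X)^\top\Phi(X)\bigr)^{-1}\Phi(X)^\top \varepsilon.
\]
Since $\EE[\varepsilon]=0$, unbiasedness $\EE[\widehat\alpha]=\alpha^{\star}$ follows immediately.

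Next, for each test point $(x,y)\in T$ I would write $y = \langle\varphi(x), \alpha^{\star}\rangle + \varepsilon_T$, where $\varepsilon_T$ is independent of the training noise. Expanding
$(\widehat p(x)-y)^2 = (\langle\varphi(x),\widehat\alpha-\alpha^{\star}\rangle - \varepsilon_T)^2$
and taking expectation, the cross term vanishes by independence, giving
\[
R_T^{(\alpha^{\star})}(\widehat\alpha) = \EE\bigl[(\widehat\alpha-\alpha^{\star})^\top \Sigma_T (\widehat\alpha - \alpha^{\star})\bigr] + \sigma^2.
\]
An analogous deterministic computation shows $R_T^{(\alpha^{\star})}(\alpha) = (\alpha-\alpha^{\star})^\top \Sigma_T (\alpha-\alpha^{\star}) + \sigma^2$ for any fixed $\alpha$, whose minimum value is $\sigma^2$, attained at $\alpha = \alpha^{\star}$, since $\Sigma_T \succeq 0$. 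Hence the excess risk equals $\EE[(\widehat\alpha-\alpha^{\star})^\top \Sigma_T (\widehat\alpha-\alpha^{\star})]$.

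Finally, setting $A := (\Phi(X)^\top\Phi(X))^{-1}\Phi(X)^\top$ and applying the identity $\EE[\varepsilon^\top M\varepsilon] = \sigma^2\Tr(M)$ with $M = A^\top \Sigma_T A$, together with cyclicity of trace and the simplification $AA^\top = (\Phi(X)^\top\Phi(X))^{-1} = \tfrac{1}{n}\Sigma_S^{-1}$, I would obtain
\[
\EE\bigl[(A\varepsilon)^\top \Sigma_T (A\varepsilon)\bigr] = \sigma^2 \Tr(\Sigma_T A A^\top) = \tfrac{\sigma^2}{n}\Tr(\Sigma_S^{-1}\Sigma_T),
\]
which is the claimed formula. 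I do not anticipate any genuine obstacle: this is the textbook OLS bias-variance decomposition. The one mildly nontrivial conceptual point is that test and training features can live in different ambient dimensions, but this is absorbed entirely into the definitions of $\Sigma_S$ and $\Sigma_T$ via the consistent free basis $\varphi$ from Lemma~\ref{lem:uniqueC}, so no additional argument is required.
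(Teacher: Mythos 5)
Your proposal is correct and follows essentially the same route as the paper: decompose the risk via $R_T^{(\alpha^\star)}(\alpha) = \sigma^2 + (\alpha-\alpha^\star)^\top\Sigma_T(\alpha-\alpha^\star)$, observe the minimum over $\alpha$ is attained at $\alpha^\star$, substitute $\widehat\alpha-\alpha^\star = (\Phi(X)^\top\Phi(X))^{-1}\Phi(X)^\top\varepsilon$, and finish with the quadratic-form trace identity and $\Phi(X)^\top\Phi(X) = n\Sigma_S$. The only (minor) difference is that you make the independence of training and test noise explicit when killing the cross term, which the paper leaves implicit; otherwise the two arguments coincide step for step.
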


\begin{example}\label{ex:instantiation-minimax-risk}
    Recall the setting of~\cref{ex:linear-monomial-basis}. In this case, the basis size is two. For concreteness, assume that the training data $S \subseteq \R^{d} \times \R$ lie in some dimension $d$, while the test set $T \subseteq \R^{\bar d} \times \R$ is independent of $S$ and lives in a different dimension $\bar d.$ Suppose that the feature vectors $\{x\}_{(x, y) \in S}$ in $S$ are drawn i.i.d. from a standard Gaussian distribution $N(0, I_d)$ and similarly the feature vectors in $T$ are i.i.d. with distribution $N(0, I_{\bar d}).$ A quick computation reveals that 
    $$\EE \Sigma_{S} = \begin{pmatrix}
        1 & 0 \\ 0 & d
    \end{pmatrix} \quad \text{and} \quad \EE \Sigma_{T} = \begin{pmatrix}
        1 & 0 \\ 0 & \bar d
    \end{pmatrix}.$$
    Then, for any sufficiently small $\Delta > 0$  the excess risk is bounded by 
    $$\frac{\sigma^2}{n} \Tr\left(\Sigma_S^{-1}\Sigma_T\right) \leq (1+\Delta) \cdot \frac{\sigma^2}{n} \cdot  \left( 1 + \frac{\bar d}{d}\right)$$
    with probability at least $1 - 4\left(\exp(-58 \Delta |S|) + \exp(-58 \Delta |T|)\right)$
    We defer the proof of this bound to Appendix~\ref{app:proof-instantiation-minimax-risk}. When $\bar d = d$ the upper bound scales like the intrinsic dimension of the parameter space over the number of data points, which is standard for regression in a fixed dimension \cite{bach2024learning}. On the other hand, when $\bar d > d,$ the bound reveals a graceful degradation of the excess risk.
\end{example}

In general, the bound in Theorem~\ref{thm:bound_estimator} appears difficult to compute explicitly. Nonetheless, we now show that this bound is optimal among any estimation procedure in a minimax sense.
Namely, as before, suppose we have a fixed samples of feature vectors for training $X_S \subseteq \cup_{d=1}^\infty V_d$ and for testing $ X_T \subseteq \cup_{d=1}^\infty V_d$, both of which might have vectors in different dimensions. 
Given any parameter $\alpha \in \R^{k}$ we consider the dataset $S = \{(x, y) \mid x \in X_S\}$ where for each $x$ the label is given by $y = \dotp{\varphi(x), \alpha} + \varepsilon$, here the noise $\varepsilon$ is i.i.d. with distribution $N(0, \sigma^2).$ We define the test set $T$ in a completely analogous fashion. %

\begin{theorem}[\textbf{Lower bound}]\label{thm:loweround} Assume that $\widehat \alpha$ is any estimator having access to training data $S$ and $\alpha \in \R^k$ is arbitrary. Then, the minimax excess risk on test set $T$ is exactly 
\begin{equation}\label{eq:minimax}
    \inf_{\widehat \alpha} \mathop{\sup}_{\alpha} \left\{{R}_T^{(\alpha)}(\widehat \alpha) - \mathop{\min}_{\alpha' \in \R^k} {R}_T^{(\alpha)}( \alpha') \right\} = \frac{\sigma^2}{|S|} \Tr\left(\Sigma_{S}^{-1} \Sigma_{T} \right).
\end{equation}
\end{theorem}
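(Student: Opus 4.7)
The upper-bound direction $\leq$ in \eqref{eq:minimax} is immediate: plugging the OLS estimator $\widehat\alpha$ of \eqref{eq:GOAT} into Theorem~\ref{thm:bound_estimator} gives an excess risk of $\tfrac{\sigma^2}{|S|}\Tr(\Sigma_S^{-1}\Sigma_T)$, and since this value does not depend on $\alpha$, the supremum over $\alpha$ attains it. The content of the theorem is therefore the matching lower bound, which I would establish by the classical Bayesian minimax argument, executed in three steps.

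\textbf{Step 1 (Reduction to weighted quadratic loss).} Because the training noise driving $\widehat\alpha$ is independent of the test noise appearing in each label $y$, a direct expansion using $\min_{\alpha'\in\R^k} R_T^{(\alpha)}(\alpha') = \sigma^2$ gives
\begin{equation*}
    R_T^{(\alpha)}(\widehat\alpha) - \min_{\alpha'\in\R^k} R_T^{(\alpha)}(\alpha') \;=\; \EE\bigl[(\widehat\alpha-\alpha)^\top \Sigma_T (\widehat\alpha-\alpha)\bigr].
\end{equation*}
Thus it suffices to lower bound $\inf_{\widehat\alpha}\sup_\alpha \EE[(\widehat\alpha-\alpha)^\top\Sigma_T(\widehat\alpha-\alpha)]$ in the Gaussian linear model $y = \Phi(X_S)\alpha + \varepsilon$ with $\varepsilon \sim N(0,\sigma^2 I_{|S|})$.

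\textbf{Step 2 (Bayesian lower bound with Gaussian prior).} I would invoke the standard inequality that the minimax risk dominates the Bayes risk against any prior. Choosing the Gaussian prior $\alpha\sim N(0,\tau^2 I_k)$, Gaussian conjugacy shows that the posterior $\alpha\mid y$ is Gaussian with covariance $A_\tau^{-1}$, where $A_\tau := \tfrac{|S|}{\sigma^2}\Sigma_S + \tfrac{1}{\tau^2} I_k$ is a deterministic matrix independent of $y$. A completion-of-squares argument in $\alpha'$ shows that the posterior mean minimizes $\alpha'\mapsto \EE[(\alpha'-\alpha)^\top\Sigma_T(\alpha'-\alpha)\mid y]$, and hence the Bayes risk is $\Tr(\Sigma_T A_\tau^{-1})$.

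\textbf{Step 3 (Limit $\tau\to\infty$).} Since $\Phi(X_S)$ has full column rank by assumption, $\Sigma_S$ is invertible, so $A_\tau^{-1}\to \tfrac{\sigma^2}{|S|}\Sigma_S^{-1}$ as $\tau\to\infty$; the Bayes risk therefore converges to $\tfrac{\sigma^2}{|S|}\Tr(\Sigma_S^{-1}\Sigma_T)$, matching the upper bound from the opening paragraph. I expect the main care to be needed in Step~1, where one must verify that cross terms in the expansion of $R_T^{(\alpha)}(\widehat\alpha)$ vanish by independence of training and test noise and that the minimum over deterministic $\alpha'$ equals $\sigma^2$ uniformly in $\alpha$; the Bayesian computation in Steps~2--3 is routine once this reduction is in hand.
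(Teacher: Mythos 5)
Your proposal is correct and follows essentially the same route as the paper: a Bayesian lower bound with a mean-zero Gaussian prior (your $\tau^2$ is the paper's $\sigma^2/(\lambda n)$, and $\tau\to\infty$ is their $\lambda\to 0$), reduction of the excess risk to the weighted quadratic loss $\EE\|\widehat\alpha-\alpha\|_{\Sigma_T}^2$, and identification of the posterior mean as the Bayes-optimal estimator. The only cosmetic difference is that you read off the Bayes risk as $\Tr(\Sigma_T A_\tau^{-1})$ from the posterior covariance directly, whereas the paper computes the posterior mean explicitly via the joint-Gaussian conditioning formula and then expands the resulting quadratic.
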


\section{Computing Free Bases}\label{sec:bases}

This section studies how to compute a free basis of invariant polynomials and provides computationally efficient bases for permutation and set-permutation invariant polynomials. %

Setting the stage, let $\{(V_d, \gamma_d)\}_{d\in\N}$ be a consistent sequence of $\{G_d\}_{d\in\N}-$representations with $\R[V_d]_m$ having finite stability degree. Monomials yield a natural choice for a basis of the polynomial spaces $\R[V_d]_m$; %
the next result shows that if $\gamma_d$ is the zero-padding 
 map, and $G_d\subset\fS_d$ acts by permutation on the basis of $V_d,$ %
 then the set of polynomials obtained by summing all monomials in a $G_d$-orbit yields a free basis of invariants.

\begin{proposition}[Free basis via symmetrization]\label{lemma: monomial free basis}
    Consider $\{(V_d, \gamma_d)\}_{d\in\N}$, a consistent sequence of $\{G_d\}_{d\in\N}-$representations, where $\gamma_d$ is the zero-padding map, $G_d\subset\fS_d$ acts by permuting coordinates of $V_d$, 
    and suppose $\cF_m=\{(\R[V_d]_m, \gamma_d)\}_{d\in\N}$ has stability degree $D.$ Then, if $f_1^{(d)}, \ldots, f_{N_d}^{(d)}$ is the monomial basis of $\R[V_d]_m,$ the set $$B_d = \left\{\frac{1}{c_i}\sum_{g\in G_d}g\cdot f^{(d)}_i:i=1, \ldots, N_d\right\}_{d\in\N}$$ of monic polynomials forms a free basis of invariants for $\cF_m,$ where each $c_i$ is a normalizing constant. \footnote{ The normalizing constants $c_i$ can be explicitly computed by taking the $\fS_d$-sum of a monomial and dividing so that the result is monic.}
\end{proposition}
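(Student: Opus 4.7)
My plan is to verify three assertions: (i) each element of $B_d$ lies in $\R[V_d]_m^{G_d}$; (ii) the set $B_d$ is a basis of $\R[V_d]_m^{G_d}$ for $d\geq D$; and (iii) the basis elements project consistently under $\gamma_d^*$ across dimensions, establishing the free-element property. Assertion (i) is immediate, since each element of $B_d$ is, up to scaling, a $G_d$-average of a monomial and is therefore $G_d$-invariant.

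For (ii), I would exploit that $G_d\subset\fS_d$ acts by coordinate permutation and hence permutes the monomial basis of $\R[V_d]_m$, partitioning monomials into $G_d$-orbits. Monomials in the same orbit produce (after the prescribed normalization) the same monic symmetrization, while monomials in distinct orbits produce symmetrizations supported on disjoint sets of basis monomials and are therefore linearly independent. Spanning follows because every $f\in\R[V_d]_m^{G_d}$ satisfies $f=\tfrac{1}{|G_d|}\sum_{g\in G_d}g\cdot f$; expanding $f$ in the monomial basis and applying linearity writes $f$ as a linear combination of the monic symmetrizations. This reasoning essentially recovers Lemma~\ref{lemma: rank = number of orbits}.

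The substantive part is (iii). The adjoint of the zero-padding map acts on polynomials by restriction, $\gamma_d^*(f)=f|_{x_{d+1}=0}$. Fix a monomial $x^\alpha$ with $\alpha_{d+1}=0$, so that $x^\alpha$ can be regarded as an element of both $\R[V_d]_m$ and $\R[V_{d+1}]_m$, and look at its monic $G_{d+1}$-symmetrization $\sum_{\beta\in G_{d+1}\cdot\alpha}x^\beta$. Setting $x_{d+1}=0$ kills exactly those terms with $\beta_{d+1}>0$, leaving the sum over $\{\beta\in G_{d+1}\cdot\alpha:\beta_{d+1}=0\}$. Using the consistency of the representation sequence---namely that $G_d\hookrightarrow G_{d+1}$ sits inside the stabilizer of the last coordinate in $G_{d+1}$---an orbit-stabilizer argument identifies this subset with the $G_d$-orbit of $\alpha$ in $V_d$. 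Hence the restriction equals the monic $G_d$-symmetrization of $x^\alpha$ in $\R[V_d]_m$, as required.

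The principal obstacle is this last orbit identification, which depends on the explicit combinatorial structure of the embeddings $G_d\hookrightarrow G_{d+1}$ provided by the consistent sequence. It becomes essentially tautological once one indexes distinct elements of $B_d$ by the underlying multiset of positive exponents characterizing each $G_d$-orbit, since (for $d\geq D$) this indexing is a $d$-independent combinatorial datum and the identification between the $G_{d+1}$-orbit restricted to $\{\beta_{d+1}=0\}$ and the corresponding $G_d$-orbit follows by matching multisets. Under this indexing, $\gamma_d^*$ realizes the free-basis consistency relation entry by entry.
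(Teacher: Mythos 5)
Your proposal tracks the paper's proof closely: item (i) invariance is immediate, item (ii) the basis property is delegated to Lemma~\ref{lemma: rank = number of orbits}, and item (iii) consistency is shown by computing the restriction of a monic $G_{d+1}$-symmetrization under the zero-padding adjoint $\gamma_d^* = (\cdot)|_{x_{d+1}=0}$, which leaves the sum over the subset of the $G_{d+1}$-orbit with $\beta_{d+1}=0$. This is exactly the structure of the paper's argument.

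The one place you differ is in how you try to close the ``principal obstacle'' you yourself identify, and the patch you propose is not quite right in the stated generality. Your final paragraph argues that distinct elements of $B_d$ can be indexed by the multiset of positive exponents, and that matching multisets identifies the restricted $G_{d+1}$-orbit with a $G_d$-orbit. This is valid for the full symmetric group $\fS_d$, but fails for a general $G_d \subset \fS_d$: two distinct $G_d$-orbits can share a multiset (take $G_d = \fS_{d-1}$ fixing the last coordinate; then $x_1$ and $x_d$ have the same multiset but lie in different orbits). Similarly, a bare orbit-stabilizer argument using $G_d \subset \mathrm{Stab}_{G_{d+1}}(\text{last coord})$ only shows the restricted set is $G_d$-stable, not that $G_d$ acts transitively on it. The hypothesis you have not invoked and that actually closes the gap is the stability degree: for $d \geq D$, the map $\gamma_d^*\colon \R[V_{d+1}]_m^{G_{d+1}} \to \R[V_d]_m^{G_d}$ is an isomorphism, so no restricted orbit is empty, hence the $r := \dim\R[V_d]_m^{G_d}$ restricted sets form a partition of the monomial basis of $\R[V_d]_m$ into $r$ nonempty $G_d$-stable cells; since there are also exactly $r$ distinct $G_d$-orbits, each cell is a single orbit. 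For what it is worth, the paper's proof asserts the orbit-matching step in the same terse way, so your proposal reproduces both the structure and the lacuna of the original; but the multiset-indexing justification you offer is strictly weaker than what the statement requires.
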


\begin{proof}
    By Lemma~\ref{lemma: rank = number of orbits}, we have that $B_d$ forms a basis of $\R[V_d]_m^{\fS_d}$ for each dimension $d\geq D$ and so it remains to check that each $G_d-$summed monomial is a free element of $\{\R[V_d]_m\}_{d\in\N}.$ Picking a monomial $f_i^{(d+1)}$, each monomial $g\cdot f_i^{(d+1)}$ appears $c_i$ times in the sum $\sum_{g\in G_{d+1}}g\cdot f_i^{(d+1)}.$ By dividing by the normalizing constant $c_i,$ we ensure that the coefficient on each monomial in the orbit is one. Since $\gamma_d$ is the zero-padding map, the projection $\gamma_d^*$ sets the value of variables that are in $\R[V_{d+1}]_m$ but not $\R[V_d]_m$ equal to zero. It follows that $\gamma_d^*\left(\frac{1}{c_i}\sum_{g\in G_{d+1}}g\cdot f^{(d+1)}_i\right)$ is the sum of all monomials in the orbit of $f_i^{(d+1)}$ that are also contained in $\R[V_d]_m.$ %
    If two monomials $f_{j}^{(d+1)}, f_{j'}^{(d+1)}$ are in the orbit of $f_i^{(d+1)}$ and are contained in $\R[V_d]_m,$ there exists some permutation $\s\in G_d$ such that $f_j^{(d+1)} = \s f_{j'}^{(d+1)}.$ 
    Setting $f_j^{(d)} := \gamma_d^*(f_j^{(d+1)})$, it follows that  $$\gamma_d^*\left(\frac{1}{c_i}\sum_{g\in G_{d+1}}g\cdot f^{(d+1)}_i\right)= \frac{1}{c_j}\sum_{g\in G_d}g\cdot f^{(d)}_j$$ holds for some $j$ and thus $\frac{1}{c_i}\sum_{g\in G_d}g\cdot f_i^{(d+1)}$ is a free element as desired.
\end{proof}

In particular, taking $V_d$ and $G_d=\fS_d$ to be the vector spaces and groups appearing in Examples \ref{thm:Permutations}, \ref{thm:Set-permutations}, \ref{theorem: graphs rank}, and \ref{theorem: point cloud rank}, we can apply Lemma \ref{lemma: monomial free basis} to conclude that the $\fS_d$-averaged monomials form a free basis of invariants for $\R[V_d]_m^{\fS_d}.$ However, as the dimension $d$ increases, calculating the $\fS_d$-orbit of monomials becomes prohibitively costly. 
Thus, it is desirable to design free bases of invariants that remain tractable as $d$ grows.
While we are not aware of such constructions for graphs (Example~\ref{theorem: graphs rank}) and point clouds (Example~\ref{theorem: point cloud rank}), we provide efficiently-computable free bases for permutation invariant (Example~\ref{thm:Permutations}) and set permutation invariant kernels (Example \ref{thm:Set-permutations}) in the following two sections, respectively. 

\subsection{Permutation Invariants}\label{sec:permutation-basis}
The foundation for the free bases of both permutation and set permutation invariant polynomials are \textit{elementary symmetric polynomials}. 

\begin{definition}[Elementary Symmetric Polynomials]
    The \textit{elementary symmetric polynomial}  of degree $k$ in $d$ variables $x=(x_1, \ldots, x_d)$ is  $e_k(x) = \sum_{1\leq i_1<\ldots<i_k\leq d}\prod_{j=1}^{k}x_{i_j}.$
    For a given partition $\l\vdash m,$ \textit{the elementary symmetric polynomial indexed by $\l$} is $e_\l(x) = e_{\l_1}(x)\ldots e_{\l_d}(x)$ where $e_{\l_i}(x)$ is the $\l_i^{th}$ elementary symmetric polynomial.
\end{definition}

Elementary symmetric polynomials are well studied combinatorial objects. In particular,  they form an algebra basis of the ring of symmetric polynomials, and the elementary symmetric polynomials indexed by partitions form a vector space basis of the space of symmetric polynomials \cite{fleischmann-invariantTheory,smith-InvariantTheory,stanley-ec2,   weyl1946classical}. 
The key to the efficient computability of elementary symmetric polynomials is the following recursion:
\begin{equation}\label{eq:esp-recursion}
    e_{j}(x_1, \ldots, x_d) = x_d e_{j-1}(x_1, \ldots, x_{d-1}) + e_j(x_1, \ldots, x_{d-1})
\end{equation}
We leverage this recursion to show that the elementary symmetric polynomials indexed by partitions form a free basis of invariants.

\begin{proposition}[Free basis of permutation invariant polynomials]
    Let $V_d = \R^d$ and consider the consistent sequence of $\{\fS_d\}_{d\in\N}$-representations $\cF_m = \{(\R[V_d]_m, \gamma_d)\}_{d\in \N}$ where $\gamma_d$ is the zero-padding map. The set $\{e_\l(x_1, \ldots, x_d):\l\vdash j\leq m\}$ of all elementary symmetric polynomials indexed by partitions forms a free basis of invariants for $\cF_m.$
\end{proposition}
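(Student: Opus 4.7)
The plan has two components: establishing the basis property in each sufficiently large dimension, and verifying consistency under the transition maps $\gamma_d$.

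For the basis property, I would invoke the Fundamental Theorem of Symmetric Polynomials, which asserts that $\R[x_1,\ldots,x_d]^{\fS_d}$ is isomorphic as a graded $\R$-algebra to the polynomial ring $\R[e_1,\ldots,e_d]$. Consequently, the set $\{e_\lambda : \lambda \text{ a partition with all parts} \leq d\}$ is a vector space basis of the ring of symmetric polynomials, and restricting to total degree at most $m$ yields a basis of $\R[V_d]_m^{\fS_d}$. Once $d \geq m$, every partition $\lambda \vdash j$ with $j \leq m$ has all parts bounded by $m \leq d$, so the indexing set stabilizes to all partitions of integers $j \leq m$. The resulting cardinality $\sum_{j=0}^m p(j)$ matches $\dim \R[V_d]_m^{\fS_d}$ as computed in Example~\ref{thm:Permutations}, confirming that this set is exactly of the right size.

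For consistency, I would apply the recursion \eqref{eq:esp-recursion} directly: setting $x_{d+1}=0$ yields $e_j(x_1,\ldots,x_{d+1})\big|_{x_{d+1}=0} = e_j(x_1,\ldots,x_d)$. Since $\gamma_d^*$ is the evaluation map that zeroes out $x_{d+1}$ and is multiplicative, it follows that
\[
\gamma_d^*\!\left(e_\lambda(x_1,\ldots,x_{d+1})\right) = \prod_i \gamma_d^*\!\left(e_{\lambda_i}(x_1,\ldots,x_{d+1})\right) = \prod_i e_{\lambda_i}(x_1,\ldots,x_d) = e_\lambda(x_1,\ldots,x_d),
\]
so each family $\{e_\lambda^{(d)}\}_{d\in\N}$ is a free element of $\cF_m$.

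The main technical subtlety I anticipate is reconciling the fact that the statement indexes by all partitions $\lambda \vdash j \leq m$, while for $d < m$ certain $e_\lambda$ vanish (because some part $\lambda_i$ exceeds $d$) and the remaining nonzero ones form a strictly smaller basis of $\R[V_d]_m^{\fS_d}$. This is not a genuine obstacle, however, since the definition of free basis only requires the basis property to hold for $d \geq D$, and the stability degree is $D = m$ by the results of Section~\ref{sec:stability_background}. In that regime the indexing set stabilizes and no polynomial in the list is identically zero. Thus the only substantive input is the Fundamental Theorem of Symmetric Polynomials, which may be cited directly, and the rest of the argument is an application of \eqref{eq:esp-recursion} and a dimension count.
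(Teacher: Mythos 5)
Your proposal is correct and follows essentially the same route as the paper: both invoke the classical fact that elementary symmetric polynomials indexed by partitions form a vector-space basis of the invariant ring in the relevant degrees, and both verify the freeness/consistency condition by applying the recursion \eqref{eq:esp-recursion} with the last variable set to zero, then using multiplicativity to pass from $e_j$ to $e_\lambda$. Your additional remark about the regime $d<m$, where some $e_\lambda$ vanish, is a useful clarification of a point the paper leaves implicit, but it does not alter the argument.
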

\begin{proof}
    Elementary symmetric polynomials indexed by partitions
    of $j \leq m$ form a vector space basis of $\R[V_d]_m^{\fS_d}$ and so we need only prove that they are free elements. 
    Since $\gamma_d$ is the zero-padding map, for each $f(x_1, \ldots, x_d)\in\R[V_d]_m$ we have that $\gamma_d^*f(x_1, \ldots, x_d) = f(x_1, \ldots, x_{d-1}, 0).$ So, by \eqref{eq:esp-recursion}, for each elementary symmetric polynomial $e_j(x_1, \ldots, x_d)$, \begin{align*} 
    \gamma_d^*(e_j(x_1, \ldots, x_d)) & = e_j(x_1, \ldots, x_{d-1},0) 
    \\ & = e_j(x_1, \ldots, x_{d-1}) + 0\cdot e_{j-1}(x_1, \ldots, x_{d-1}) = e_j(x_1, \ldots, x_{d-1}).
    \end{align*}
    It follows that the elementary symmetric polynomials are free elements of $\cF_m.$ Moreover, for a given partition $\l,$ we can see that
    \begin{align*}
        \gamma_d^*(e_\l(x_1, \ldots, x_d)) &= \gamma_d^*(e_{\l_1}(x_1, \ldots, x_d)\cdots e_{\l_d}(x_1, \ldots x_d)) \\
        &= e_{\l_1}(x_1, \ldots, x_{d-1}, 0)\cdots e_{\l_d}(x_1, \ldots x_{d-1}, 0) \\
        &= \prod_{i=1}^d(e_{\l_i}(x_1, \ldots, x_{d-1}) + 0\cdot e_{\l_i-1}(x_1, \ldots, x_{d-1}))\\
        &=e_{\l}(x_1, \ldots, x_{d-1})
    \end{align*}
    and so the elementary symmetric polynomials indexed by partitions of $j \leq m$ are also free elements of $\cF_m.$ 
\end{proof}

For any given dimension $d$, and $r = \dim(\R[V_d]_m^{\fS_d})$, let $E_m^{(d)} \colon \R^d \rightarrow \R^r$ %
be a mapping producing a vector of all evaluations of elementary symmetric polynomials indexed by partitions of degree at most $m.$ As before we will drop the index $d$ and simply write $E_m.$  Suppose that one has access to the matrix $C$ representing a kernel $K$ with respect to the basis of elementary symmetric polynomials indexed by partitions. Then, for data points $x, y\in\R^d,$ Procedure \ref{alg:permutation-invariant} describes the evaluation of $K(x, y).$

\begin{algorithm}
\caption{Permutation Invariant Kernel Computation}\label{alg:permutation-invariant}
\begin{algorithmic}[1]
    \REQUIRE Data points $x, y\in \R^d,$ matrix $C$ representing $K.$
    \STATE Set $E_m(x) = E_m(y) = 0 \in \R^r$
    \FORALL{$j$ such that $0\leq j\leq m$}\label{step:esp}
    \STATE Store $e_j(x)$
    \STATE Store $e_j(y)$
    \ENDFOR
    \FOR{$j=0$ to $m$}\label{step:esp-partition}
    \FORALL{$\l\vdash j$}
    \STATE Set $E_m(x)_\l = \prod_{i=1}^de_{\l_i}(x)$ 
    \STATE Set $E_m(y)_\l = \prod_{i=1}^de_{\l_i}(y)$
    \ENDFOR
    \ENDFOR
    \ENSURE $E_m(x)^\T C E_m(y)$
\end{algorithmic}
\end{algorithm}

\begin{proposition}[Complexity of evaluating a permutation invariant kernel]\label{prop:permutation-complexity} Suppose that $\{K_d\colon V_{d} \times V_d \rightarrow \R\}$ is a free kernel with rank equal to $r = \dim(\R[V_d]_m^{\fS_d}) $. Suppose one has access to the matrix $C \in \R^{r \times r}$ representing $K_d$ in the basis of elementary symmetric polynomials indexed by partitions, i.e., $K_d(x, y) = E_m(x)^\top C E_m(y)$. Then, the number of floating point operations required to compute $K_d(x ,y)$ following Procedure \ref{alg:permutation-invariant} is
$$4md + \sum_{j=0}^m\sum_{\l\vdash j}(|\l|-1) + \left(\sum_{j=0}^mp(j) +1 \right)\left(2\sum_{j=0}^mp(j) - 1\right).
$$
\end{proposition}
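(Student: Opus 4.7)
The plan is to verify the operation count by breaking Procedure~\ref{alg:permutation-invariant} into its three computational phases---(i) the elementary symmetric polynomial table on lines \ref{step:esp}--\ref{step:esp}, (ii) the partition-indexed product on line \ref{step:esp-partition}, and (iii) the final bilinear form $E_m(x)^\T C E_m(y)$---and tallying multiplications and additions for each phase. The three terms in the claimed bound correspond exactly to these three phases, so the proof reduces to establishing each tally separately.

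For phase (i), the key observation is that the recursion \eqref{eq:esp-recursion} allows us to compute the full table $\{e_j(x_1,\ldots,x_k) : 0 \leq j \leq m,\; 1 \leq k \leq d\}$ by dynamic programming: initializing $e_0 \equiv 1$ and $e_j(\emptyset) = 0$ for $j \geq 1$, each new entry $e_j(x_1,\ldots,x_k)$ costs one multiplication and one addition. Iterating $j$ up to $m$ and $k$ up to $d$ yields $2md$ operations for $x$; the same bookkeeping for $y$ doubles this to $4md$, matching the first term. For phase (ii), each partition $\lambda \vdash j$ contributes a product $\prod_{i=1}^{|\lambda|} e_{\lambda_i}(x)$ of already-tabulated values (padding by $e_0 = 1$ where needed), which costs exactly $|\lambda| - 1$ multiplications; summing over all partitions of all degrees $j \leq m$ gives the middle term $\sum_{j=0}^m \sum_{\lambda \vdash j}(|\lambda| - 1)$.

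For phase (iii), letting $r = \sum_{j=0}^m p(j) = \dim(\R[V_d]_m^{\fS_d})$, the standard cost of forming $C E_m(y) \in \R^r$ is $r(2r-1)$ (each of $r$ rows requires $r$ multiplications and $r-1$ additions), and the subsequent inner product with $E_m(x)$ costs an additional $2r - 1$ operations, for a total of $(r+1)(2r-1)$; this is precisely the third term. Adding the three contributions and recalling $r = \sum_{j=0}^m p(j)$ produces the stated bound. The main subtlety to be careful about is bookkeeping in phase~(ii): one must verify that the products for $E_m(x)$ and $E_m(y)$ can be amortized or attributed consistently with the statement's count, and that the recursion in phase~(i) genuinely amortizes across $j$ so that the $4md$ figure is not undercounted---both follow once the table is built bottom-up in $k$ and then indexed by $\lambda$ without any recomputation of the scalar $e_j$'s themselves.
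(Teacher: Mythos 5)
Your proof follows exactly the paper's argument: you decompose Procedure~\ref{alg:permutation-invariant} into the same three phases (building the elementary symmetric polynomial table via the recursion \eqref{eq:esp-recursion}, forming the partition-indexed products, and evaluating the bilinear form), and your operation tallies in each phase reproduce the paper's counts, including the explicit factorization of the final term as $(r+1)(2r-1)$ via $r(2r-1) + (2r-1)$. The one small caveat you raise about whether the middle-term count should be attributed to both $x$ and $y$ is a fair observation, but the paper's own proof uses the identical count, so your argument is consistent with it.
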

\begin{proof}
    The number of flops necessary to compute $K_d(x, y)$ can be upper bounded as follows.
\begin{enumerate}
\vspace{-10pt}
    \item The number of flops required to complete the loop in Step \ref{step:esp} is given by the number of flops necessary to compute the elementary symmetric polynomials in the entries of $x$ and $y.$ Thanks to \eqref{eq:esp-recursion}, computing the elementary symmetric polynomials requires filling in the entries of an $m\times d$ matrix, with each entry requiring one addition and one multiplication. So, in total, evaluating the elementary symmetric polynomials in $x$ and $y$ takes $4md$ flops.
    \item For each partition $\l,$ combining the elementary symmetric polynomials to form the elementary symmetric polynomial indexed by $\l$ takes $|\l|-1$ multiplications, where $|\l|$ is the number of parts of $\l.$ Thus, the number of flops required to complete the loop in Step \ref{step:esp-partition} by generating all elementary symmetric polynomials indexed by partitions of all $j \leq m$ is $\sum_{j=0}^m\sum_{\l\vdash j}\left(|\l|-1\right)$ operations. 
    \item Finally, evaluating the output $E_m(x)^\T CE_m(y)$ via naive matrix multiplication takes $(\sum_{j=0}^m p(j) +1)(2\sum_{j=0}^m p(j) - 1)$ operations. 
\end{enumerate}
\vspace{-10pt} The sum of these operations amounts to the stated bound.
\end{proof}
 One can further upper bound the number of flops by by $4md + (m^2 + m)p(m) +2m^2p(m)^2$, and subsequently using the Hardy-Ramanujan asymptotic formula for $p(m)$, the number of flops grows at a rate of $4md + (m+1)\exp\left(\pi\sqrt{{2m}/{3}}\right) + \exp\left(2\pi\sqrt{{2m}/{3}}\right).$ Note that this completes the proof of the first statement in Theorem \ref{thm:complexity}. While there is an unavoidable exponential dependence on the degree $m,$ there is only a linear dependence on the dimension of the data and thus this method scales well as the degree is held constant and dimension increases.

\begin{example}[Permutation Invariant Taylor Features]\label{ex: invariant tf} We now turn our attention to a concrete invariant kernel that arises in practice and provide an efficient method for its computation. Consider the non-invariant kernel $K(x, y) = \sum_{j=0}^m\frac{\langle x, y\rangle^j}{j!},$ which is a rescaled Taylor features kernel. The permutation invariant Taylor features kernel is
\begin{align*}
    K^*(x, y) &= \sum_{\s\in\fS_d}\sum_{j=0}^m\frac{\langle x,\s(y)\rangle^j}{j!}.
\end{align*}

We provide an efficient way to evaluate $K^*(x, y)$ by explicitly determining the matrix $C$ representing the symmetric bilinear form corresponding to $K^*$ with respect to the basis of elementary symmetric polynomials indexed by partitions. First, we write the kernel as a linear combination of \textit{monomial symmetric polynomials}, a well-known vector space basis of the space of symmetric polynomials. Given a partition $\l\vdash j,$ the monomial symmetric polynomial indexed by $\l$ is %
$m_\l(x)=\frac{1}{\eta(\l)}\sum_{\s\in\fS_d}x^{\s(\l)}$ where $\eta(\l)$ is the product of the factorials of the multiplicities of the parts of $\l.$ 

For a given degree $m$ we successively compute
\begin{align*}
    \sum_{\s\in\fS_d}\langle x, \s(y)\rangle^m &= \sum_{\s\in\fS_d}(x_1y_{\s(1)}+\ldots+x_dy_{\s(d)})^m \\
    &=\sum_{\s\in\fS_d}\sum_{\l\in C_d(m)}{m\choose \l} \prod_{i=1}^d x_i^{\l_i}y_{\s(i)}^{\l_i}\\
    &=\sum_{\l\in C_d(m)}{m\choose\l}\left(\prod_{i=1}^d x_{i}^{\l_i}\right)\left(\sum_{\s\in\fS_d}\prod_{i=1}^dy_{\s(i)}^{\l_i}\right)\\
    &=\sum_{\l\in C_d(m)}{m\choose \l}\left(\prod_{i=1}^d x_{i}^{\l_i}\right)\eta(\l) m_\l(y)
\end{align*}
where the constant $\eta(\l)$  appears since it counts how many times the monomial $\prod_{i=1}^d y_i^{\l_i}$ appears in the sum $\sum_{\s\in\fS_d}\prod_{i=1}^dy_{\s(i)}^{\l_i}.$ 

Since each composition $\l$ is a permutation of some partition of $m,$ and each monomial $\prod_{i=1}^dx_{i}^{\l_i}$ appears exactly once in the sum, the equality
\begin{align*}
    \sum_{\l\in C_d(m)}{m\choose \l}\left(\prod_{i=1}^d x_{i}^{\l_i}\right)\eta(\l) m_\l(y) &= \sum_{\l\vdash m}{m\choose \l}\eta(\l) m_\l(x)m_\l(y) \quad \text{holds.}
\end{align*}
Note that the coefficients $m\choose\l$ and $\eta(\l)$ are independent of the dimension $d.$  

Using the above computations, $K^*(x, y)$ can be written as
\begin{align*}
    K^*(x, y) &= M_m(x)^\T C M_m(y)
\end{align*}
where $M_m(x)$ is the vector of all monomial symmetric polynomials in $x$ and the matrix $C$ is diagonal with entries $\frac{1}{j!}\cdot{j\choose \l}\eta(\l)$ for each partition $\l\vdash j\leq m.$

While the monomial symmetric polynomials do form a convenient vector space basis for the ring of symmetric polynomials, they are not efficiently computable in high dimensions. In contrast to monomial symmetric polynomials, we have seen that the elementary symmetric polynomials can be efficiently computed using the recursive relationship \eqref{eq:esp-recursion}. Thus, in order to efficiently compute the kernel $K^*(x, y)$ we now need only compute the change of basis matrix from monomial symmetric polynomials to elementary symmetric polynomials indexed by partitions.

It is known that each elementary symmetric polynomial $e_\l(x)$ of degree $i$ can be written as a linear combination of monomial symmetric polynomials as follows: $e_\l(x) = \sum_{\mu\vdash i}M_{\l\mu}m_\mu(x)$ where $M_{\l\mu}$ is the number of binary matrices with row sum equal to $\l$ and column sum equal to $\mu,$ see \cite{stanley-ec2}. The quantities $M_{\l\mu}$ can  be calculated recursively due to \cite{miller-harrison}. Let $B$ be the change of basis matrix from monomial symmetric polynomials to elementary symmetric polynomials indexed by partitions and $E_m(x)$ the vector of all elementary symmetric polynomials indexed by partitions of integers up to degree $m.$ Then,
\begin{align*}
    K^*(x, y) &= E_m(x)^\T B^{-\T} C B^{-1}E_m(y).
\end{align*}
For a given kernel $K(x, y),$ the matrix $B^{-\T}CB^{-1}$ is independent of both the dimension $d$ and the individual data points and thus can be precomputed. Then, to evaluate $K^*(x, y)$ for a given pair of points $x, y$ we need only evaluate the elementary symmetric polynomials, which can be done efficiently using the above recursion, and form the elementary symmetric polynomials indexed by partitions by multiplying elementary symmetric polynomials together. When the  degree of the kernel $m$ is small, the latter is fast. 
    
\end{example}

\subsection{Set-Permutation Invariants}
We now turn our attention to set-permutation invariant kernels and build upon the theory in Section~\ref{sec:permutation-basis} to provide an efficiently computable free basis of set-permutation invariant polynomials. First, we overview the necessary background on \textit{multisymmetric polynomials}. Then, we show that \textit{$\a-$polarized elementary symmetric polynomials indexed by partitions} form a free basis of set-permutation invariant polynomials. Finally, we determine the complexity of evaluating a set-permutation invariant kernel using this basis and show that it is linear in the number of points $d.$

\begin{definition}[Multisymmetric Polynomial]
    Let $x_1, \ldots, x_d$ be $k-$dimensional vectors of indeterminates. A polynomial $f(x_1, \ldots, x_d)$ in the entries of $x_1, \ldots, x_d$ is \textit{multisymmetric} if it is invariant under the action of $\fS_d$ given by $\s\cdot f(x_1, \ldots, x_d) = f\left(x_{\s(1)}, \ldots, x_{\s(d)}\right).$
\end{definition}

Multisymmetric polynomials are the natural generalization of symmetric polynomials needed to study set-permutation invariant kernels. Like symmetric polynomials, multisymmetric polynomials are classical objects in combinatorics and invariant theory, see \cite{fleischmann-invariantTheory, smith-InvariantTheory, weyl1946classical} for an overview. We predominantly follow the discussion in \cite{fleischmann-invariantTheory}. We define \textit{multi-degree}, the natural generalization of degree to polynomials in the entries of indeterminate vectors.

\begin{definition}[Multi-Degree]
    Let $f(x_1, \ldots, x_d)=\prod_{i=1}^{d}\prod_{j=1}^k x_{ij}^{\a_{ij}}$ be a monomial in the $k-$dimensional indeterminate vectors $x_1, \ldots, x_d.$ The \textit{multi-degree} of $f$ is the $k-$dimensional vector $\a = \sum_{i=1}^d(\a_{i1}, \ldots, \a_{ik}).$ The \textit{total degree} of $f$ is $|\a| = \sum_{i=1}^d\sum_{j=1}^k\a_{ij}.$

    For a given multi-degree $\a$ and polynomial $f(x_1, \ldots, x_d),$ we denote by $f_\a$ the component of $f$ with multi-degree $\a.$
\end{definition}

We now introduce the \textit{polarization operator} $\Pol$, which maps symmetric polynomials in $d$ variables to multisymmetric polynomials in the $k-$dimensional vectors $x_1, \ldots, x_d.$

\begin{definition}[Polarization] Let $f(t_1, \ldots, t_d)$ be a polynomial in $d$ variables. The \textit{polarization} of $f$ is $\Pol(f)(x_1, \ldots, x_d) = f(x_{11}+\ldots+ x_{1k}, \ldots, x_{d1}+\ldots+x_{dk}).$
For a given multi-degree $\a,$ the $\a-$polarization of $f$ is $\Pol_\a(f)(x_1, \ldots, x_d) = f_\a(x_{11}+\ldots + x_{1k}, \ldots, x_{d1}+\ldots+x_{dk}),$ the component of $\Pol(f)$ with multi-degree $\a.$
\end{definition}

\begin{example}
    Let $d=3$ and $k=2.$ First, consider the second elementary symmetric polynomial in 3 variables, $e_2(x_1, x_2, x_3) = x_1x_2+x_1x_3+x_2x_3.$
    The polarization of $e_2$ is given by 
    \begin{align*}
        \Pol(e_2) &= (x_{11}+x_{12})(x_{21}+x_{22})+(x_{11}+x_{12})(x_{31}+x_{32})+(x_{21}+x_{22})(x_{31}+x_{32}).
    \end{align*}
    There three different multi-degrees with total degree 2 are $(2, 0), (1, 1), (0, 2)$ and each of these multi-degrees corresponds to a different multi-homogeneous component of $\Pol(e_2):$
    \begin{align*}
        \Pol_{(2,0)}(e_2)(x_1, x_2, x_3) &= x_{11}x_{21} + x_{11}x_{31} + x_{21}x_{31} \\
        \Pol_{(1, 1)}(e_2)(x_1, x_2, x_3) &= x_{11}x_{22}+x_{11}x_{32} + x_{12}x_{21}+ x_{12}x_{31}+x_{21}x_{32}+x_{22}x_{31}\\
        \Pol_{(0, 2)}(e_2)(x_1, x_2, x_3) &= x_{12}x_{22}+x_{12}x_{32}+x_{22}x_{32}.
    \end{align*}
\end{example}

It follows from the definition that polarization commutes with both addition and multiplication of polynomials:
\begin{align*}
    \Pol(f+g) &= \Pol(f)+\Pol(g), \\
    \Pol(fg) &=\Pol(f)\Pol(g).
\end{align*}
In particular, given the recursive relationship  \eqref{eq:esp-recursion}, 
\begin{align}
    \Pol(e_m)(x_1, \ldots, x_d) &= \Pol(e_m)(x_1, \ldots, x_{d-1}) + (x_{d1}+\ldots, x_{dk})\Pol(e_{m-1})(x_1, \ldots, x_{d-1}).
\end{align}
We will be particularly interested in the $\a-$polarizations of elementary symmetric polynomials. Note that a simple computation shows that the equality
\begin{align}\label{eq: polarized esp recursion}
\Pol_\a(e_m)(x_1, \ldots, x_d) &= \Pol_\a(e_m)(x_1, \ldots, x_{d-1}) + \left(\sum_{j=1}^kx_{dj} \right) \Pol_{\a-\d_j}(e_{m-1})(x_1, \ldots, x_{d-1})
\end{align}
holds, where $\d_j = (0, 0, \ldots, 0, 1, 0, \ldots, 0)$ is the indicator vector for coordinate $j,$ see \cite[Section 2.3]{fleischmann-invariantTheory}.

By \cite[Theorem 3.4.1]{smith-InvariantTheory}, the set of all $\a$-polarized elementary symmetric polynomials, with $\a$ varying over all possible multi-degrees, form an algebra basis of the space $\R[x_1, \ldots, x_d]^{\fS_d}$ of all invariant polynomials in the entries of $x_1, \ldots, x_d.$ %
It follows that taking products of $\a-$polarized elementary symmetric polynomials of degree at most $m$ generates a vector space basis of %
$\R[x_1, \ldots, x_d]_m^{\fS_d}.$ In order to avoid double-counting, we introduce \textit{vector partitions} to index the different products of $\a-$polarized elementary symmetric polynomials. 

\begin{definition}[Vector Partition]
    Fix an integer $m\geq 0.$ A $k-$vector partition of $m$ into $d$ parts is a collection of $k-$dimensional non-negative integer vectors $\Lambda=(\l_1, \ldots, \l_d)$ such that $\l_1\geq\l_2\geq\ldots\geq\l_d$ in the graded lexicographic order and $\sum_{i=1}^d\sum_{j=1}^k\l_{ij} = m$. \footnote{$\l_i\geq \l_{i'}$ in the graded lexicographic order if either $\sum_{j=1}^k\l_{ij}>\sum_{j=1}^k\l_{i'j}$ or if $\sum_{j=1}^k\l_{ij}=\sum_{j=1}^k\l_{i'j}$ and $\l_i\geq \l_{i'}$ in the lexicographic order.} Each $\l_i$ is a part of $\Lambda$ and we use the shorthand $\Lambda\vdash_{k} m$ to denote a $k$-vector partition of $m.$
\end{definition}

For a vector partition $\Lambda\vdash_k j,$ the polarized elementary symmetric polynomial indexed by $\Lambda$ is 
\begin{align*}
    E_{\Lambda}(x_1, \ldots, x_d) &= \prod_{i=1}^d\Pol_{\l_i}\left(e_{|\l_i|}\right)(x_1, \ldots, x_d)
\end{align*}
where $|\l_i| = \sum_{j=1}^k\l_{ij}.$ We now show that the set of all polarized elementary symmetric polynomials indexed by vector partitions of total degree at most $m$ form a free basis of invariants for the sequence  $\cF_m = \{(\R[V_d]_m, \gamma_d)\}_{d\in\N}.$

\begin{proposition}[Free Basis of Set-Permutation Invariant Polynomials]
    Let $V_d$ and $G_d$ be the vector spaces and groups from Example \ref{thm:Set-permutations}. Consider the consistent sequence $\cF_m=\{(\R[V_d]_m, \gamma_d)\}_{d\in\N}$ of $\{G_d\}_{d\in\N}-$representations where $\gamma_d$ is the zero-padding map. The set $\{E_\Lambda(x_1, \ldots, x_d) : \Lambda\vdash_k j\leq m\}$ of all polarized elementary symmetric polynomials indexed by $k$-vector partitions of all $j \leq m$ forms a free basis of invariants for $\cF_m.$
\end{proposition}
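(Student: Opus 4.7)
The plan is to verify the two defining properties of a free basis: (i) for every $d\geq m$, the set $\{E_\Lambda : \Lambda\vdash_k j\leq m\}$ is a basis of $\R[V_d]_m^{G_d}$, and (ii) the consistency relation $\gamma_d^*\bigl(E_\Lambda^{(d+1)}\bigr) = E_\Lambda^{(d)}$ holds.

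For the basis property, I would invoke \cite[Theorem 3.4.1]{smith-InvariantTheory}, which says that the polarized elementary symmetric polynomials $\Pol_\a(e_r)$, over all $r$ and multi-degrees $\a$ with $|\a|=r$, generate the multisymmetric invariant ring $\R[V_d]^{G_d}$ as an $\R$-algebra. Hence every invariant of total degree at most $m$ is a linear combination of monomials in these generators; after sorting the factors of each monomial by the graded-lex order on multi-degrees, each such monomial is recognized as some $E_\Lambda$ with total degree at most $m$, showing that $\{E_\Lambda\}$ spans $\R[V_d]_m^{G_d}$. To upgrade spanning to basis, I would match cardinalities with the dimension from Example~\ref{thm:Set-permutations}: a $k$-vector partition $\Lambda\vdash_k j$ is determined by the integer partition $\alpha\vdash j$ recording the total degrees $|\l_i|$, together with, for each $i$, a size-$\mu_i(\alpha)$ multiset of weak compositions of $i$ into $k$ parts (the graded-lex order within each total-degree block then pins down the ordering of $\Lambda$). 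The number of such multisets is $\binom{c_k(i)+\mu_i(\alpha)-1}{\mu_i(\alpha)}$, so the total count $\sum_{j=0}^m\sum_{\alpha\vdash j}\prod_i \binom{c_k(i)+\mu_i(\alpha)-1}{\mu_i(\alpha)}$ agrees with $\dim\R[V_d]_m^{G_d}$ for $d\geq m$. A spanning set of cardinality equal to the dimension is automatically a basis.

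For consistency, the recursion \eqref{eq: polarized esp recursion} implies that setting $x_{d+1,j}=0$ for every $j$ kills the second term, yielding $\Pol_\a(e_r)(x_1,\ldots,x_d,0) = \Pol_\a(e_r)(x_1,\ldots,x_d)$, so each generator is a free element. Since $\gamma_d^*$ amounts to evaluation at $x_{d+1}=0$, it is an $\R$-algebra homomorphism, and therefore
$$\gamma_d^*\bigl(E_\Lambda^{(d+1)}\bigr) = \prod_{i=1}^{d+1}\gamma_d^*\bigl(\Pol_{\l_i}(e_{|\l_i|})(x_1,\ldots,x_{d+1})\bigr) = \prod_{i=1}^{d+1}\Pol_{\l_i}(e_{|\l_i|})(x_1,\ldots,x_d) = E_\Lambda^{(d)},$$
after identifying $\Lambda\vdash_k j$ (with $j\leq m\leq d$) with its zero-padded extension from $d$ to $d+1$ parts; the factors corresponding to zero parts contribute $\Pol_{0}(e_0)=1$ and do not affect the product.

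The main obstacle I anticipate is reconciling the two indexing conventions---a $k$-vector partition does not come with an intrinsic number of parts, while $E_\Lambda^{(d)}$ is defined as a product over exactly $d$ factors---so that a single $\Lambda\vdash_k j$ with $j\leq m$ yields a well-defined free element $(E_\Lambda^{(d)})_{d\geq m}$ via zero-padding. Once this bookkeeping is settled, both the basis property and the consistency relation follow quickly from Smith's generation theorem and the polarized elementary symmetric polynomial recursion.
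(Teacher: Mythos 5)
Your proof is correct and takes essentially the same route as the paper's: Smith's theorem for spanning, a cardinality match against the dimension count from Example~\ref{thm:Set-permutations} to upgrade spanning to a basis, and the polarized recursion \eqref{eq: polarized esp recursion} for freeness under zero-padding. Your closing remark about padding $\Lambda$ with zero parts and the harmless factors $\Pol_{0}(e_0)=1$ is a helpful clarification of bookkeeping that the paper leaves implicit.
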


\begin{proof}
    We first show that the set of all polarized elementary symmetric polynomials indexed by vector partitions forms a basis by showing that the number of $k-$vector partitions of integers at most $m$ is exactly equal to the number of distinct products of $\a-$polarized elementary symmetric polynomials with degree at most $m$ and is equal to $\dim\R[V_d]_{m}^{G_d}$ as given in Example~\ref{thm:Set-permutations}.

    First, recall that our proof of Example \ref{thm:Set-permutations} amounted to choosing unique $G_d-$orbit representatives of non-negative integer $d\times k$ matrices whose entries totaled at most $m.$ We previously argued that each orbit contains at least one matrix whose row sums form a partition of some integer $j\leq m.$ Note that there is exactly one matrix in each orbit such that the rows are ordered in the graded lexicographic order. This matrix is in direct correspondence with a $k-$vector partition and thus it follows that the number of $k-$vector partitions of integers at most $m$ is equal to $\dim\R[V_d]_m^{G_d}.$

    Since there is one elementary symmetric polynomial for each degree $j\leq m,$ the number of $\alpha-$polarized elementary symmetric polynomials of degree $j$ is equal to the number of distinct multi-degree vectors $\alpha$ with total degree $|\alpha|=j.$ Note that each multi-degree vector is in direct correspondence with a weak composition of $j$ and so there are exactly ${j+k-1\choose k-1}$ distinct $\alpha-$polarized elementary symmetric polynomials of degree $j.$ To generate a vector space basis of $\R[x_1, \ldots, x_d]_m^{\fS_d},$ we take products of $\alpha-$polarized elementary symmetric polynomials such that the total degree is at most $m.$ We count the number of distinct products as follows: for each partition $\alpha=\{1^{\mu_1}, \ldots, j^{\mu_j}\}$ of $j\leq m,$ choose $\mu_i(\alpha)$ polarized elementary symmetric polynomial of degree $i$ for all $i=1, \ldots, j.$ The product of the chosen polarized elementary symmetric polynomials is an $\fS_d-$invariant polynomial of degree $j.$ For each $i,$ there are ${c_k(i)+\mu_i(\alpha) - 1\choose \mu_i(\alpha)}$ possible choices for the polarized elementary symmetric polynomials of degree $i.$ Taking the sum over all possible degrees, there are $\sum_{j=0}^m\sum_{\alpha\vdash j}\prod_{i=1}^j{c_k(i) + \mu_i(\alpha)-1\choose \mu_i(\alpha)}$ distinct products of polarized elementary symmetric polynomials of degree at most $m.$ This is exactly the dimension of $\R[x_1, \ldots, x_d]_m^{\fS_d}$ as given in Theorem \ref{thm:Set-permutations} and so the set of all products of $\alpha$-polarized elementary symmetric polynomials with total degree at most $m$ forms a basis for $\R[x_1, \ldots, x_d]_m^{\fS_d}$. 

    Since $\gamma_d$ is the zero-padding map, for each $f(x_1, \ldots, x_d)\in\R[V_d]_{m}$ we have that the projection onto $\R[V_{d-1}]_m$ is $\gamma_d^*f(x_1, \ldots, x_d) = f(x_1, \ldots, x_{d-1}, 0).$ So, for each polarized elementary symmetric polynomial $\Pol_\alpha(e_j)(x_1, \ldots, x_d)$ we have that
    \begin{align*}
        \gamma_d^*\Pol_\alpha(e_j)(x_1, \ldots, x_d) &= \Pol_\alpha(e_j)(x_1, \ldots, x_{d-1}, 0) \\
        &=\Pol_\alpha(e_j)(x_1, \ldots, x_{d-1}) + \sum_{i=1}^k0\cdot \Pol_{\alpha-\d_i}(e_{j-1})(x_1, \ldots, x_{d-1}) \\
        &=\Pol_\alpha(e_j)(x_1, \ldots, x_{d-1})
    \end{align*}
    and so the polarized elementary symmetric polynomials are free elements. Taking the relevant products, it follows that polarized elementary symmetric polynomial indexed by vector partitions form a free basis of invariants.
\end{proof}

For any given dimensions $d, k$, and $r = \dim(\R[V_d]_m^{\fS_d})$, let $P_m^{(d)} \colon \R^{d\times k} \rightarrow \R^r$ %
be a mapping producing a vector of all evaluations of $\a-$polarized elementary symmetric polynomials indexed by vector partitions of degree at most $m.$ As before we will drop the index $d$ and simply write $P_m.$  Suppose that one has access to the matrix $C$ representing a kernel $K$ with respect to the basis of polarized elementary symmetric polynomials indexed by vector partitions. Then, for data points $x, y\in\R^d,$ Procedure \ref{alg:set-permutation} describes the evaluation of $K(x, y).$

\begin{algorithm}
    \caption{Set-Permutation Invariant Kernel Evaluation}\label{alg:set-permutation}
    \begin{algorithmic}[1]
    \REQUIRE Data points $x, y\in \R^{d\times k},$ matrix $C$ representing $K.$
    \STATE Set $P_m(x) = P_m(y) = 0 \in \R^r$
    \FOR{$j=0$ to $m$}\label{step:pol-esp}
    \STATE Compute $\Pol(e_j)(x)$
    \STATE Compute $\Pol(e_j)(y)$
    \FORALL{$\a\in C_k(j)$}
    \STATE Store $\Pol_\a(e_j)(x)$
    \STATE Store $\Pol_\a(e_j)(y)$
    \ENDFOR
    \ENDFOR
    \FOR{$j=0$ to $m$}\label{step:pol-esp-parts}
    \FORALL{$\Lambda\vdash_k j$}
    \STATE Set $P_m(x)_\Lambda = \prod_{i=1}^d \Pol_{\l_i}(e_{|\l_i|})(x)$
    \STATE Set $P_m(y)_\Lambda = \prod_{i=1}^d \Pol_{\l_i}(e_{|\l_i|})(y)$
    \ENDFOR
    \ENDFOR
    \ENSURE $P_m(x)^\T C P_m(y).$
    \end{algorithmic}
\end{algorithm}

 We use Procedure \ref{alg:set-permutation} to evaluate set-permutation kernels in the numerical experiments; see Section \ref{app:classification-experiment}.

\begin{proposition}[Complexity of evaluating a set-permutation invariant kernel]\label{prop:set-complexity}
Suppose that $\{K_d:\R^{d\times k}\times\R^{d\times k} \rightarrow \R\}$ is a free set-permutation invariant kernel. Suppose one has access to the matrix $C$ representing $K_d$ with respect to the basis of polarized elementary symmetric polynomials. Then, the number of floating point operations required to compute $K_d(x, y)$ via Procedure \ref{alg:set-permutation} is upper bounded by
    $$(m+1)\cdot\exp(\sqrt{2m/3})\cdot(k-1)^{mk}+\exp(2\sqrt{2m/3})\cdot(k-1)^{2mk} + \frac{d(2k-1)(k-1)^m}{m!}.$$
\end{proposition}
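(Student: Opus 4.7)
The plan is to emulate the proof of Proposition~\ref{prop:permutation-complexity} by tallying the floating point cost of each of the three stages of Procedure~\ref{alg:set-permutation}. The central computational tool throughout is the recursion \eqref{eq: polarized esp recursion} for the polarized elementary symmetric polynomials, which here plays the same role that \eqref{eq:esp-recursion} played in the unpolarized setting.

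For the loop at Step~\ref{step:pol-esp}, I would maintain a three-dimensional table of values $\Pol_\a(e_j)(x_1,\ldots,x_{d'})$ indexed by the degree $j\leq m$, the multi-degree $\a\in C_k(j)$, and the number of input vectors $d'\leq d$, and dually for $y$. Each new entry is produced from already-computed entries via \eqref{eq: polarized esp recursion}, which evaluates a sum of $k$ products of previously stored quantities together with one previously stored base term; this costs at most $2k-1$ flops per entry. Since the number of entries (for one of $x$, $y$) equals $d\cdot\sum_{j=0}^m|C_k(j)| = d\binom{m+k}{k}$, the total cost of this stage is bounded by $2d(2k-1)\binom{m+k}{k}$, which yields the third summand $\tfrac{d(2k-1)(k-1)^m}{m!}$ after applying a standard binomial estimate for $\binom{m+k}{k}$.

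For the loop at Step~\ref{step:pol-esp-parts}, I would observe that each vector partition $\Lambda\vdash_k j$ has at most $m$ nontrivial parts, so forming the product $\prod_{i=1}^d\Pol_{\l_i}(e_{|\l_i|})(x)$ costs at most $m$ multiplications. The total number of vector partitions of integers at most $m$ is bounded by $\exp(\sqrt{2m/3})\cdot(k-1)^{mk}$ via Example~\ref{thm:Set-permutations} together with the Hardy--Ramanujan asymptotic (as discussed right after its statement). Multiplying by $m+1$ for the outer loop over $j$ and by a factor of $2$ for the fact that we perform this computation for both $x$ and $y$ produces the first summand $(m+1)\exp(\sqrt{2m/3})(k-1)^{mk}$. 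Finally, evaluating $P_m(x)^\T C P_m(y)$ by naive matrix--vector multiplication uses $O(r^2)$ flops, where $r\leq\exp(\sqrt{2m/3})(k-1)^{mk}$ is the length of $P_m(x)$ by Example~\ref{thm:Set-permutations}; squaring this bound gives the middle summand $\exp(2\sqrt{2m/3})(k-1)^{2mk}$. Summing the three contributions gives the claimed bound.

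I expect the main obstacle to be reconciling the binomial bound $\binom{m+k}{k}$ arising in Stage~1 with the cleaner closed form $\tfrac{(k-1)^m}{m!}$ stated in the proposition: this requires the manipulation $\binom{m+k}{k}=\prod_{i=1}^m\tfrac{k+i}{i}$ together with a careful argument that the resulting slack can be absorbed into the multiplicative constants on the other two summands (or alternatively into lower-order terms that are themselves dominated by those summands). The remaining two stages are then routine applications of the dimension count in Example~\ref{thm:Set-permutations} and the Hardy--Ramanujan partition asymptotic.
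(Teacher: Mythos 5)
Your proposal takes essentially the same route as the paper: decompose the flop count into the three stages of Procedure~\ref{alg:set-permutation}, fill a $d$-by-(number of multi-degrees) table via the recursion \eqref{eq: polarized esp recursion} at $2k-1$ flops per entry, bound the number of vector partitions by the dimension count from Example~\ref{thm:Set-permutations} together with the Hardy--Ramanujan asymptotic, and estimate the final bilinear form $P_m(x)^\T C P_m(y)$ as $O(r^2)$. The paper's proof is a slightly looser version of yours (it counts only the multi-degrees with $|\alpha|=m$ rather than all $|\alpha|\leq m$, and omits the factor of $2$ for evaluating on both $x$ and $y$), but the overall argument is identical.

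The "obstacle" you flag at the end is real, and you are right to be suspicious: the inequality $\binom{m+k-1}{m}\leq(k-1)^m/m!$ that the paper invokes to reach the stated third summand is false. Writing $\binom{m+k-1}{m}=\frac{1}{m!}\prod_{i=0}^{m-1}(k+i)$ shows that every factor in the product is at least $k$, so $\binom{m+k-1}{m}\geq k^m/m!>(k-1)^m/m!$; the correct upper bound is instead $\binom{m+k-1}{m}\leq(k+m-1)^m/m!$. Your proposed workaround of absorbing the slack into the other two summands cannot succeed, because the stage-1 term is the only one that grows with $d$ while the other two are $d$-independent, so no constant rescaling of them can dominate the linear-in-$d$ discrepancy. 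In short, you reproduce the paper's argument faithfully, and your instinct that the binomial step needs care is correct; the proposition's stated third summand should read $d(2k-1)(k+m-1)^m/m!$ (or an equivalent correct bound), and the paper's own proof contains the same gap you noticed.
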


\begin{proof}
We upper bound the number of flops necessary to compute $K_d(x, y)$ as follows.

\begin{enumerate}
    \item Completing the loop in Step \ref{step:pol-esp} requires evaluating all $\a-$polarized elementary symmetric polynomials.  Given the recursive relationship \eqref{eq: polarized esp recursion}, evaluating all $\a-$polarized elementary symmetric polynomials entails filling the entries of a $d\times {m+k-1\choose k-1}$ matrix since there are ${m+k-1\choose k-1}$ multi-degrees with total degree $m.$ Determining each entry requires $k$ additions and $k-1$ products and so in total evaluating all $\a-$polarized elementary symmetric polynomials takes $d\cdot{m+k-1\choose k-1}\cdot(2k-1)$ flops. Note that ${m+k-1\choose k-1}={m+k-1\choose m}\leq\frac{(k-1)^m}{m!}.$
    \item Completing the loop in Step \ref{step:pol-esp-parts} requires computing all distinct products of $\a-$polarized elementary symmetric polynomials of degree at most $m.$ Recall that the quantity $\dim\R[V_d]_m^{G_d}$ grows at most $\exp(\sqrt{2m/3})\cdot(k-1)^{mk}$ and so there are at most $\exp(\sqrt{2m/3})\cdot(k-1)^{mk}$ polarized elementary symmetric polynomials indexed by partitions to evaluate. Evaluating each polarized elementary symmetric polynomial indexed by vector partition requires taking the product of $m$ $\a-$polarized elementary symmetric polynomials. It follows that the number of flops required to complete the loop in Step \ref{step:pol-esp-parts} is $m\exp(\sqrt{2m/3})\cdot(k-1)^{mk}.$
    \item Evaluating the output $P_m(x)^\T CP_m(y)$ via naive matrix multiplication takes $(r+1)(2r-1)$ operations and so the number of operations required is at most $(\exp(\sqrt{2m/3})\cdot(k-1)^{mk}+1)(2(\exp(\sqrt{2m/3})\cdot(k-1)^{mk})-1)$.
\end{enumerate}
Taking the sum of the number of operations at each step yields the stated upper bound. Note that while there is an unavoidable exponential dependence on both $k$ and $m,$ but a linear dependence on the dimension $d.$
\end{proof}

The proof of Proposition \ref{prop:set-complexity} also completes the proof of Theorem \ref{thm:complexity}. We now introduce a specific example of a set-permutation invariant kernel that can be evaluated efficiently in high dimensions.

\begin{example}[Set-Permutation Invariant Taylor Features]\label{ex:set-tf} We now build on Illustration \ref{ex: invariant tf} and provide efficient methods to compute kernels that are invariant under set permutations. 

While there is an unavoidable exponential dependence on $k$ in the complexity of evaluating the basis of polarized elementary symmetric polynomial indexed by vector partitions, there are set-permutation invariant kernels for which computation is much quicker. For instance, consider the kernel defined by
\begin{align*}
    K(x, y) &= \frac{1}{|\fS_d|}\sum_{\s\in\fS_d}\sum_{j=0}^m\frac{\langle x_1, \s(y_1)\rangle^j + \ldots +\langle x_k, \s(y_k)\rangle^j}{j!}
\end{align*}
where $x_i$ is the $i^{th}$ \textit{column} of the matrix  $X\in\R^{d\times k}.$ Note that $K$ is the sum of $k$ invariant Taylor features kernels as described in Example~\ref{ex: invariant tf}. Since the complexity of evaluating a permutation invariant kernel is linear in dimension $d,$ and $K$ is evaluated by evaluating $k$ permutation invariant kernels, the complexity of evaluating $K$ is linear in both $d$ and $k.$ 

However, it is clear that the above kernel is not full rank; the rank of $K$ is $k\cdot\sum_{j=1}^mp(j)$ which is in general less than the maximum rank of a set-permutation invariant kernel given in Example \ref{thm:Set-permutations}. With this in mind, consider instead the invariant Taylor features kernel for sets given by 
\begin{align*}
    K(x, y) &= \frac{1}{|\fS_d|}\sum_{j=0}^m\sum_{\s\in\fS_d}(\langle x_1, \s(y_1)\rangle+\ldots+\langle x_k, \s(y_k)\rangle^j\\
    &=\frac{1}{|\fS_d|}\sum_{j=0}^m\sum_{\l\in C_k(j)}\sum_{\s\in\fS_d}\prod_{i=1}^k\langle x_i, \s(y_i)\rangle^{\l_i}.
\end{align*}
We would like to be able to write the above kernel in terms of the invariant inner products $\sum_{\s\in\fS_d}\langle x_i, \s(y_i)\rangle^{\l_i},$ since we have seen in Illustration \ref{ex: invariant tf} that these inner products are easy to compute. Although the sum over $\s\in\fS_d$ does not commute with the product, we can approximate the invariant Taylor features kernel for sets with the kernel defined by
\begin{align}\label{eq:set-kernel}
    K(x, y) &= \frac{1}{|\fS_d|}\sum_{j=0}^m\sum_{\l\in C_k(j)}\prod_{i=1}^k\sum_{\s\in\fS_d}\langle x_i, \s(y_i)\rangle^{\l_i}.
\end{align} 
In Section \ref{app:classification-experiment}, we use the kernel in \eqref{eq:set-kernel} to classify a mixture of Gaussians and compare its performance to the non-invariant Taylor features kernel. 
\end{example}

\section{Numerical Experiments}\label{sec:numerics}

In this section, we perform three numerical experiments that support the theory in this paper. Our first experiment showcases the use of the invariant kernels defined in \eqref{eq:set-kernel} to classify sets of points. For our second experiment, we use the invariant regression estimator \eqref{eq:GOAT} to learn sequences of symmetric polynomials that generalize well across dimensions. Our final experiment uses the formula in Theorem \ref{thm:calculate_dim_intro} to estimate the dimensions of different spaces of invariant polynomials where the true value is known. We wrote these experiments in Python with some key functions imported from SageMath. The code and data to reproduce these results can be found at
\begin{center}
    \url{https://github.com/j4ck-k/invariant-kernels}.
\end{center}
\subsection{Classifying Sets of Points}\label{app:classification-experiment}
The kernel $K$ defined in \eqref{eq:set-kernel} is an example of an invariant kernel under set permutations.  We use this kernel to classify sets of points; the data points in each set are drawn from a fixed Gaussian distribution. We sample from a collection of seven mean-zero Gaussian distributions. Each covariance matrix is diagonal, with diagonal entries in $\{1, 2, 0.5\}.$ %

We run multiple experiments, for each of them approximately half of the sets have been sampled from the standard normal distribution, and the task is to correctly classify which data points are in this class. Since the distribution does not depend on the order in which the points were sampled, the classifier should be invariant under reshuffling of the data; therefore, an invariant kernel $K$ is appropriate for this task. 

Let $\{(x_i, y_i)\}_{i=1}^n$ be the training data set---$x_i$ is the stacked set of Gaussian samples and $y_i\in\{-1, 1\}$ encodes whether or not the set was drawn from a standard Gaussian---and $M_K = [K(x_i, x_j)]_{i,j=1}^n$ the Gram matrix corresponding to this data. We construct a classifier by first computing $\a^*$ by solving
\begin{align*}
    \a^{\star} \in \mathop{\argmin}_{\a\in\R^n}\sum_{i=1}^n \frac{1}{2}\|y_i - (M_K\a)_i\|^2 +\frac{\l}{2}\a^\T M_K \a, \quad \text{or, equivalently,} \quad \alpha^{\star} =(M_K + \l I_n)^{-1}y
\end{align*}
where the regularizing parameter $\l$ is chosen via $4-$fold cross validation.\footnote{Note that we  are using the $\ell_2$ loss function for ease of computation.} The classification function is then given by
\begin{align*}
    f(x) &= \sgn\left(\sum_{i=1}^n \a_i K(x, x_i)\right).
\end{align*}

We run experiment using training sets of size $n\in \{200, 400, 600, 800, 1000\}$ %
Each classifier is then tested using an unseen set of $200$ data points, and its success is measured based on the accuracy of the classifications on the test set. The accuracy of the classifier on the test set $\{(x_i, y_i)\}_{i=1}^n$ is calculated using the formula
\begin{align*}
    {\rm Accuracy}(f) &= \frac{\Card\{i: f(x_i) = y_i\}}{n}.
\end{align*}

To construct the training and test sets, we sample the sets of points from a mixture of Gaussians with probability 0.5 of being sampled from the standard normal distribution and equal probability of being sampled from the remaining distributions.

 We compare the performance of the invariant kernel $K$ and non-invariant Taylor features approximation to the Gaussian kernel. For this comparison, we run two batches of experiments summarized in Figures~\ref{fig:classification-var-degree} and~\ref{fig:classification-var-k}.

\textbf{Setting 1: Classifying points in $\R^2$ with Kernels of Varying Degree.} We first fix the dimension $k=2$ and test invariant and non-invariant kernels of degree $m \in \{1, 2, 3, 4, 5\}.$ As shown in Figure \ref{fig:classification-var-degree}, invariant kernels of degrees higher than one far outperformed their non-invariant counterparts. Test accuracies of nonlinear invariant kernels were approximately 98\%, whereas all non-invariant kernels had accuracy below 50\%, which is worse than if the function simply randomly assigned classifications to each point cloud. Moreover, the linear invariant kernel also had much greater success than all of the non-invariant kernels. This demonstrates that for this task, even the simplest invariant kernel is more powerful than any non-invariant kernel for this task. We highlight that increasing the degree of both the invariant and non-invariant kernels had little effect on the accuracy of the predictions, and thus, for this task, it suffices to consider only the relatively simple quadratic kernels.

\textbf{Setting 2: Classifying point in $\R^k$ with Quadratic Kernels.} We now fix the degree $m=2$ of both the invariant and non-invariant kernel and test their performance on sets of points in dimension $k \in \{2, 3, 4, 5\}.$ As shown in Figure \ref{fig:classification-var-k}, the invariant kernel once again had almost perfect classification accuracy, whereas the non-invariant kernel had an accuracy of approximately $50\%,$ which is equivalent to the accuracy of a random guess.

\begin{figure}[h]
    \centering
    \begin{subfigure}[T]{0.49\textwidth}
         \centering
         \includegraphics[width=\textwidth]{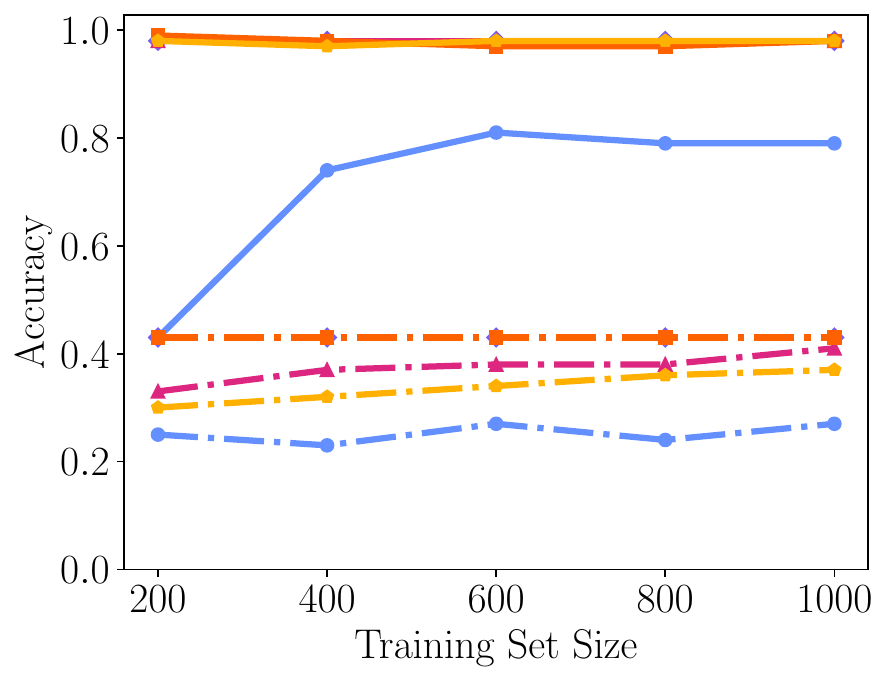}
         \includegraphics[width=\textwidth]{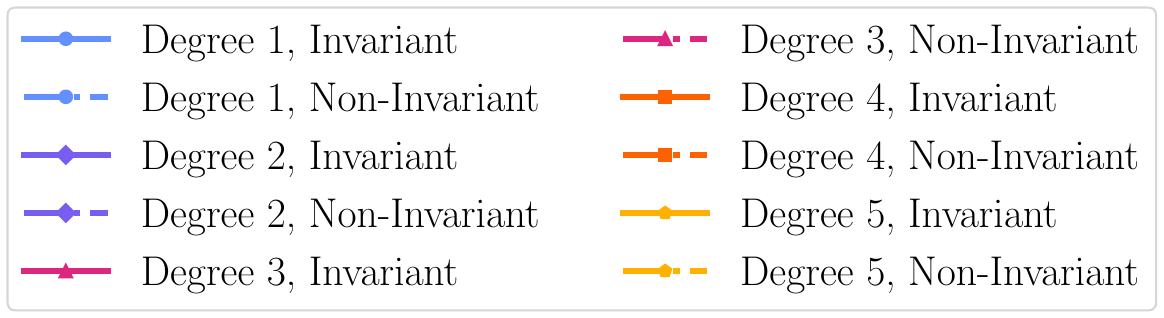}
         \caption{Accuracy with Varying Degree $m$}
         \label{fig:classification-var-degree}
     \end{subfigure}
     \hfill
     \begin{subfigure}[T]{0.50\textwidth}
         \centering
         \includegraphics[width=\textwidth]{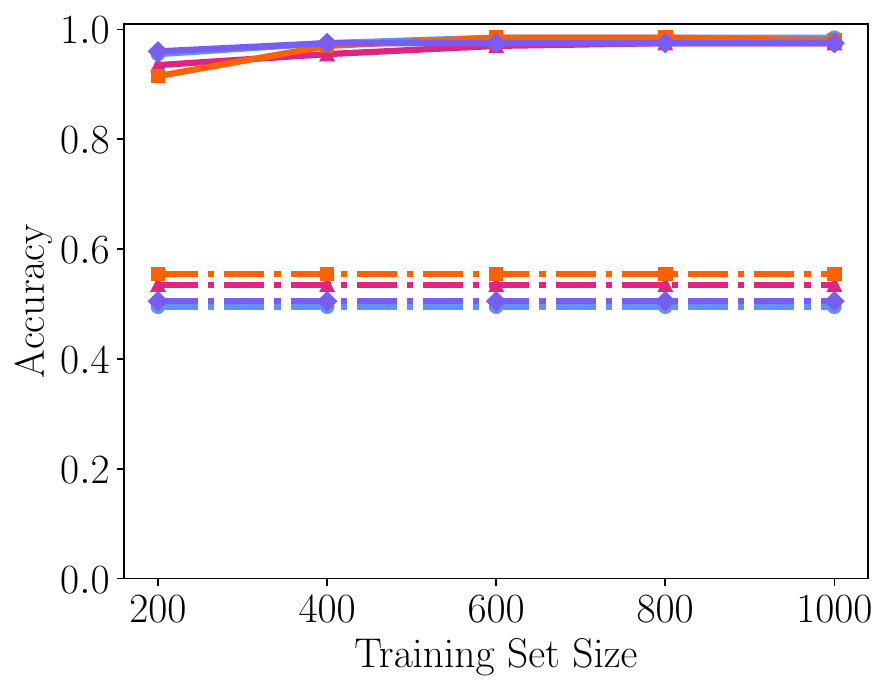}
         \includegraphics[width=\textwidth]{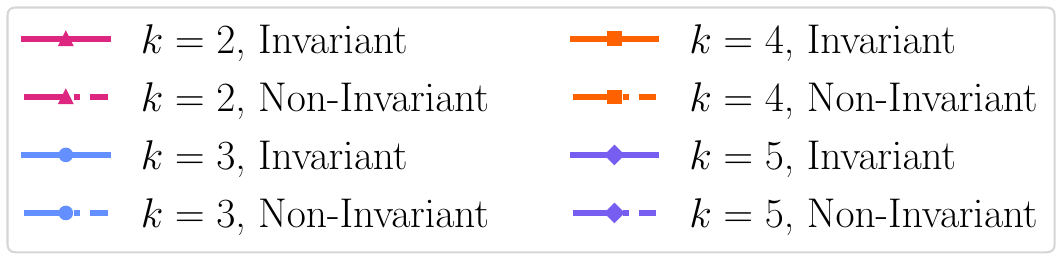}
         \caption{Accuracy with Varying Dimension $k$}
         \label{fig:classification-var-k}
     \end{subfigure}
     \caption{
     Comparison of performance between $(i)$ the set-permutation invariant kernel \eqref{eq:set-kernel} and $(ii)$ the non-invariant Taylor features kernel for classifying points drawn from a mixture of Gaussians (Illustration \ref{ex: invariant tf}). We present two experimental scenarios: Figure \ref{fig:classification-var-degree} (left) shows results with fixed dimension $k=2$ across varying degrees $m \in \{1, \dots, 5\}$, while Figure \ref{fig:classification-var-k} (right) shows results with fixed degree $m=2$ across varying dimensions $k \in \{1, \dots, m\}$. In each setting, the invariant kernel outperforms the non-invariant kernel.}
\end{figure}

\subsection{Generalization Across Dimensions} \label{app:regression-experiment} We now leverage the theory in Section \ref{sec:generalization} to learn sequences of symmetric polynomials on data across different dimensions. We consider symmetric polynomials of degrees $m \in \{2, 3, 4, 5\}$. For each degree $m$, we learn ten randomly generated symmetric polynomials, and so in total we learn $40$ different symmetric polynomials. Each polynomial is a random linear combination of elementary symmetric polynomials indexed by partitions with coefficients sampled from the standard Gaussian distribution. For each polynomial $f,$ our training set consists of $10,000$ points $\{x_i\}\subset\R^{100}$ along with the noisy outputs $\{f(x_i) + \e_i\}$ where $x_i$ is sampled from the standard Gaussian distribution and $\e_i$ is randomly sampled from a Gaussian distribution with mean zero and variance $\sigma^2 = 10^{-2}.$ We construct a predictor as in \eqref{eq:GOAT} and then test its performance as the dimension of the test set increased from $d=100$ to $d=1000.$ 

Figure \ref{fig:generalization} shows the mean squared error (MSE) and mean percentage error (MPE) of each learned predictor function as the dimension of the test sets increased. The MSE and MPE on the set $\{(x_i, y_i)\}_{i=1}^n$ for each predictor were calculated according to the formulas
\begin{equation*}
    {\rm MSE}(f) = \frac{1}{n}\sum_{i=1}^n(y_i-f(x_i))^2 \quad \text{and} \quad
    {\rm MPE}(f) = \frac{1}{n}\sum_{i=1}^n\frac{|y_i-f(x_i)|}{y_i}.
\end{equation*}

\begin{figure}[h]
    \centering    \includegraphics[width=0.49\linewidth]{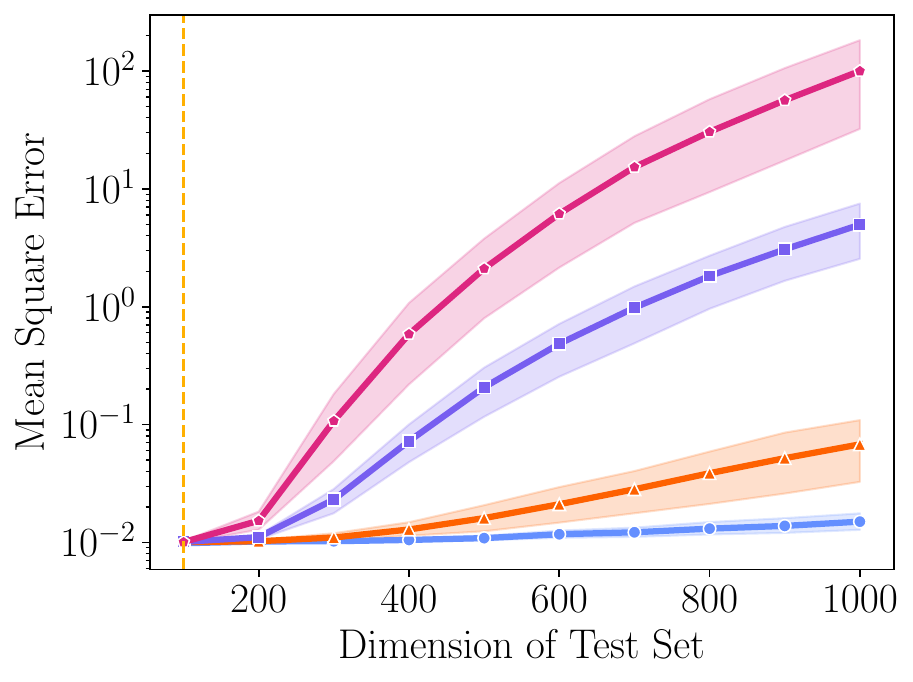}\hfill
\includegraphics[width=0.49\linewidth]{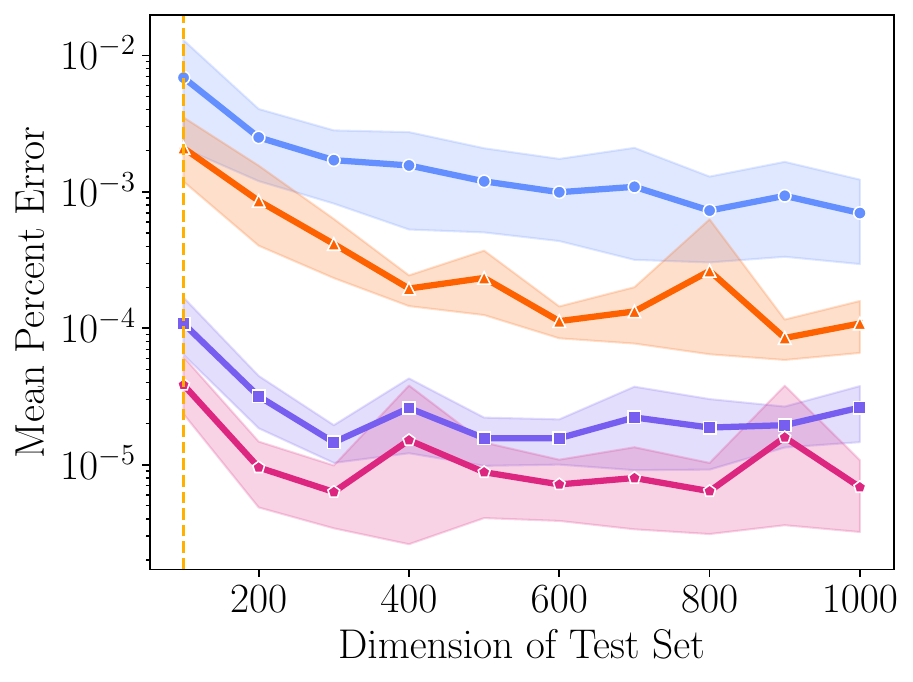}
\includegraphics[width=0.6\textwidth]{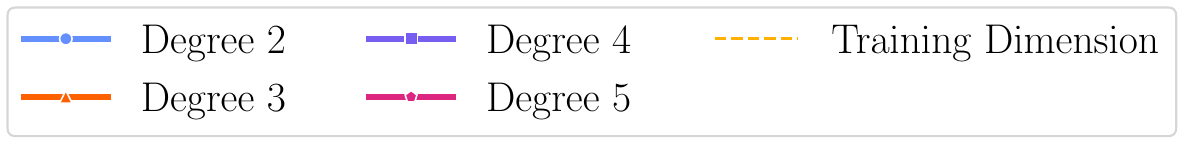}
    \caption{Mean Square Error (left) and Mean Percentage Error (right) versus test dimension using the estimator defined \eqref{eq:informal-estimator}. For each degree, data was collected from ten randomly generated symmetric polynomials. Each estimator was trained on 10,000 noisy data points in dimension $d=100$ and tested on 200 noisy data points in dimensions $d=100, 200, \ldots, 1000.$ The highlighted regions indicate a 95\% confidence interval for the values.}
    \label{fig:generalization}
\end{figure}

The mean MSE stays close to zero for each predictor of degree $m\in \{2, 3, 4\},$ but rapidly increased for degree $m=5$ and dimensions greater than $d=500.$ However, the MPE of the predictors consistently decreased as both degree $m$ and test set dimension $d$ increased, with all predictors having less than a $1\%$ error in dimension $d=1000.$

\subsection{Estimating $\dim(\R[V]_m^G)$ Using Theorem \ref{thm:calculate_dim_intro}}\label{app:monte-carlo-experiment} Figure \ref{fig:dim-estimates} displays Monte Carlo estimates of the dimension of $\R[V]_m^G$ for three different choices of vector space $V$ and group $G.$ The estimates were formed by calculating an empirical average of the formula in Theorem \ref{thm:calculate_dim_intro} with an increasing number of samples. Each experiment was repeated 10 times and Figure~\ref{fig:dim-estimates} plots the average estimate and the 95\% confidence interval.

\begin{figure}[h]
    \centering
    \includegraphics[width=0.325\linewidth]{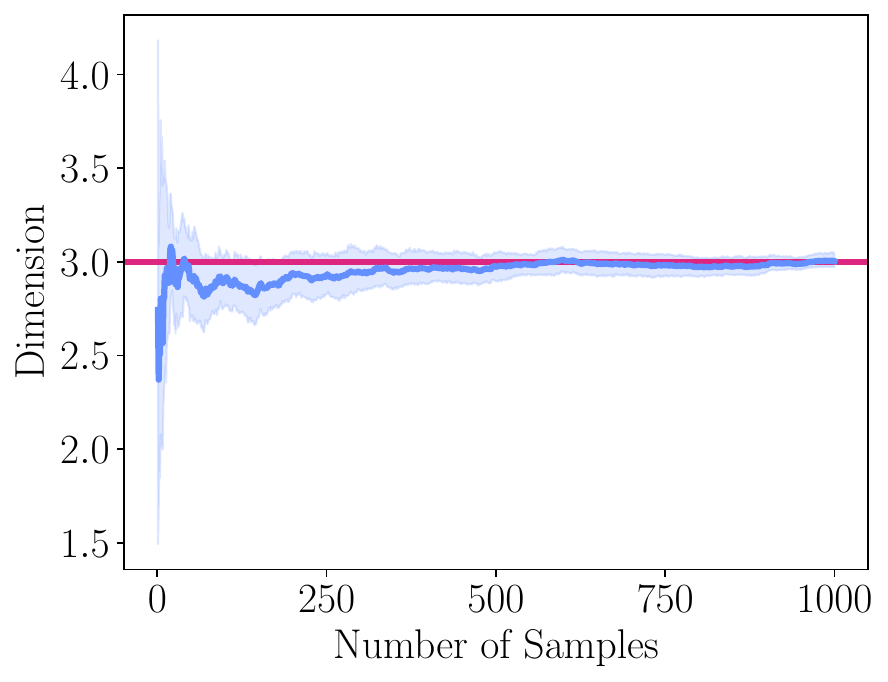}
    \includegraphics[width=0.315\linewidth]{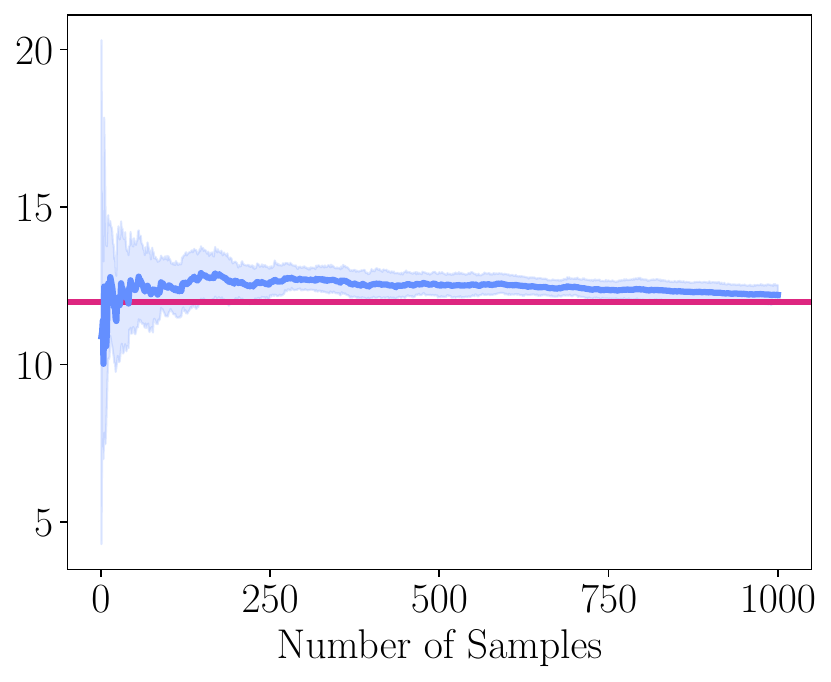}
    \includegraphics[width=0.33\linewidth]{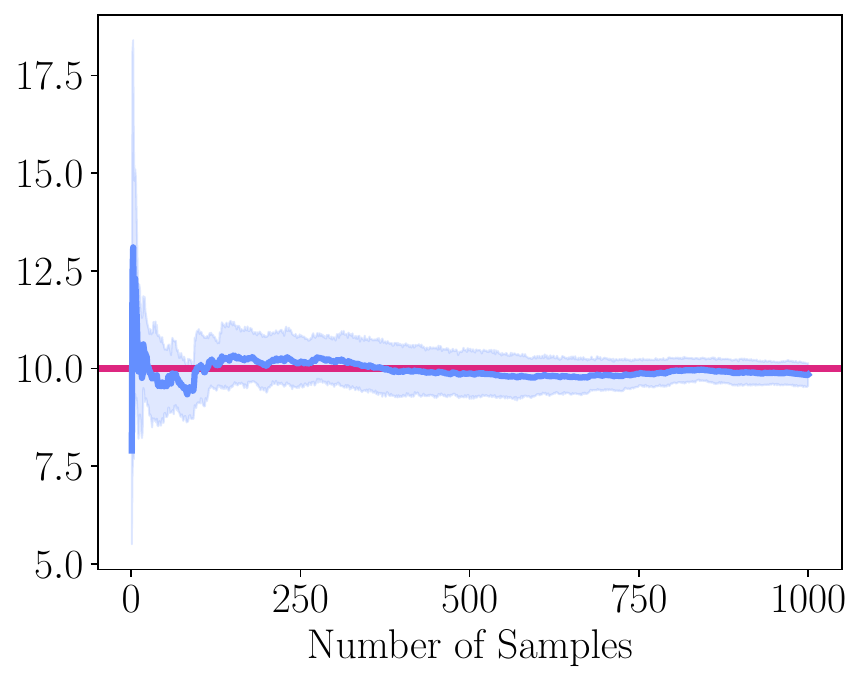}
    \includegraphics[width=0.315\textwidth]{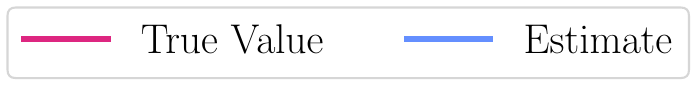}
    \caption{Monte Carlo estimates of $\dim(\R[V]_4^G)$ for orthogonal transformations, coordinate permutations and graph permutations. The estimates are calculated using the empirical average approximation to the formula in Theorem \ref{thm:calculate_dim_intro} with an increasing number of samples. In each case, we see that the estimate stabilises around the correct dimension relatively quickly. The highlighted regions are the 95\% confidence intervals for the estimate. %
    }
    \label{fig:dim-estimates}
\end{figure}

The first plot shows the estimate for the dimension of the space $\R[V]_4^G$ where $V=\R^d$ and $G = O(d),$ the group of orthogonal transformations. To construct the estimates of the dimension, we randomly sampled orthogonal matrices using the algorithm described in \cite{mezzadri-random-mats}. As any polynomial that is invariant under orthogonal transformations must be a polynomial in the square norm, the dimension $d$ is arbitrary. Moreover, we can directly calculate that the true dimension is $\dim(\R[V]_4^G) = 3.$ As shown in Figure \ref{fig:dim-estimates}, the estimated dimension quickly stabilizes to be approximately 3 as the number of samples increases.

The second plot shows the estimate for the dimension of $\R[V]_4^G$ where $V=\R^d$ for $d\geq 4$ and $G=\fS_d$ acts via coordinate permutation. Random elements of the group $\fS_d$ were sampled using the \texttt{random\_element} function in Sage. Our estimate quickly approaches the true value computed in Example \ref{thm:Permutations}, i.e., $\dim(\R[V]_4^G)=\sum_{j=0}^4p(j) = 12.$ 

The final plot shows the estimate for the dimension of $\R[V]_2^G$ where $V\subset \R^{d\times d}$ is the space of all adjacency matrices of graphs on $d$ vertices and $G=\fS_d$ acts via simultaneous permutation of rows and columns. As shown in Example \ref{theorem: graphs rank}, the rank of kernels on graphs does not stabilize until the number of nodes $d$ is at least twice the degree $m$ of the kernel. Thus, for ease of computation, we use a smaller degree for this example. As in the previous example, random elements of the group $\fS_d$ were sampled using Sage's \texttt{random\_element} function. The exact value of this dimension $\dim(\R[V]_2^G)=10$ was calculated by summing the first three values in the sequence \cite{oeisA007717}. 

\vspace{-0.2in} 

\section{Conclusion}
In this work, we studied $G-$invariant polynomial kernels of fixed degree and proved that the rank of these kernels is independent of the input data dimension for important examples such as unordered sets, graphs, and point clouds. We provided elementary combinatorial arguments for these facts and also gave a general formula for computing invariant kernel ranks by leveraging representation theory. Moreover, we highlighted a direct connection between rank stabilization of invariant kernels and the phenomenon of representation stability. The theory of representation stability in turn allows us to learn sequences of polynomials that generalize well to data of arbitrary dimension. In particular, we provided a minimax optimal regression procedure for cross-dimensional generalization. Finally, we provided efficiently computable bases for the spaces of permutation invariant and set-permutation invariant polynomials, thereby enabling computation in high dimensions.
Experimental results validated the developed theory.

\newpage
\printbibliography

\appendix

\section{Proofs from Section~\ref{section: preliminaries}}\label{app:prelim-proofs}
\subsection{Proof of Lemma~\ref{lem:inv_kern}}\label{sec:proof_lem_inv}
The fact that $K^*$ is invariant follows directly from right-translation invariance of the Haar measure, $\mu(Sg)=\mu(S)$, for all $g\in G$ and all Borel sets $S$. Suppose now that $K$ is PSD and fix 
arbitrary points $x_1, \ldots, x_n\in V$.  Using linearity of the integral, we compute 
\begin{align*}
    \sum_{i,j=1}^n c_ic_j K^*(x_i, x_j)
    = \int_G\int_{G} \underbrace{\sum_{i,j=1}^n c_ic_j K(gx_i, hx_j)}_{\geq 0}\, d\mu(g)d\mu(h),
\end{align*}
and therefore $K^*$ is PSD.

\subsection{Proof of Lemma~\ref{lemma: C is unique}}\label{sec:proof_lembilin}
Writing the polynomial kernel $K$ in terms of the basis $f_1, \ldots, f_N$  yields the equation 
$$ K(x,y)=\sum_{i=1}^N\sum_{j=1}^N c_{ij} f_j(x)f_i(y),$$
for some constants $c_{ij}$. Let us now argue uniqueness. To this end, suppose that there exists another matrix $\tilde C$ satisfying \eqref{eqn:K_as_lin_form}. Then we may write $K(x,y)$ in two ways:
$$\sum_{i=1}^N\left(\sum_{j=1}^N c_{ij} f_j(x)\right) f_i(y)=K(x,y)=\sum_{i=1}^N\left(\sum_{j=1}^N \tilde c_{ij} f_j(x)\right)f_i(y).$$
Linear independence of $f_i$ implies the equality $\sum_{j=1}^N c_{ij} f_j(x)=\sum_{j=1}^N \tilde c_{ij} f_j(x)$ for any $x$ and any index $i=1,\dots,N$. Again using linear independence, we deduce $c_{ij}=\tilde c_{ij}$ for all $i,j$, as claimed. Finally, symmetry of $K(x,y)$ in $x$ and $y$ implies that \eqref{eqn:K_as_lin_form} remains true with $C$ replaced by $(C+C^\top)/2$. Uniqueness of the matrix $C$ satisfying \eqref{eqn:K_as_lin_form}, which we already established, therefore implies that $C$ is symmetric. Taking the expectation $\E_{g,h}$ of both sides in \eqref{eqn:K_as_lin_form} directly yields \eqref{eqn:K_as_lin_form_sym}.

\subsection{Proof of Theorem~\ref{theorem: rank is number of orbits}} 
 Define $\mathcal{F}(x)=[f_1(x),\ldots,f_N(x)]$, $\mathcal{F}_*(x)=[f_1^*(x),\ldots,f^*_N(x)]$, and
$\mathcal{V}(x)=[g_1(x),\ldots, g_r(x)]$ 
where $\{g_j\}_{j=1}^r$ is a basis for ${\rm span}\{f_1^*, \ldots, f_N^*\}$. Then expanding $f_i^*$ in the $\{g_j\}$ basis we deduce that there exists a matrix $B\in \R^{r\times N}$ such that  $\cF_*(x)=\mathcal{V}(x)B$ for all $x\in\R^d$. Consequently using \eqref{eqn:K_as_lin_form}, we deduce 
\begin{align*}
K(x,y)&=\mathcal{V}(x)\cdot (BCB^\top) \cdot\mathcal{V}(x)^\top.
\end{align*}
Forming a rank $r$ factorization $BCB^\top=LR^\top$ for some $L,R\in\R^{r\times r}$ therefore yields the equality $K(x,y)=\langle \mathcal{V}(x)L,\mathcal{V}(x)R\rangle$, thereby verifying the inequality $\rank K\leq r$ in \eqref{eqn:rank_est_basis}. 

Next, we show the equality in \eqref{eqn:rank_est_basis}. Clearly, since $f_i^*$ lie in $\R[V]_m^G$, the inequality holds:
\begin{equation}\label{eqn:invar_space_lower}
r\leq\dim(\R[V]_m^G).
\end{equation}

It remains to argue that $\dim(\R[V]_m^G)\leq r.$ Choose $h(x) \in\R[V]_m^G.$ Since $\R[V]_m^G$ is a subspace of $\R[V]_m,$ it follows that there exist coefficients $c_1, \ldots, c_N$ such that $h(x) = \sum_{i=1}^Nc_i f_i(x).$ Then, as $h$ is invariant under the action of $G$ and $G$ acts linearly on $\R[V]_m$, the equalities 
\begin{align*}
    h(x) &= h^*(x) = \sum_{i=1}^Nc_if_i^*(x)
\end{align*}
hold. It follows that $f_1^*(x), \ldots, f_N^*(x)$ form a spanning set for $\R[V]_m^G$ and thus the inequality $\dim(\R[V]_m^G)\leq r$ holds as claimed.

\section{Proofs from Section~\ref{sec:generalization}} \label{app:generalization}
\subsection{Proof of Theorem~\ref{thm:bound_estimator}}
The proof follows along similar lines as the standard argument for linear regression in a fixed dimension \cite[Proposition 3.5]{bach2024learning}.
The fact that the estimator is unbiased follows immediately from its definition and the fact that $y = \Phi(X)\alpha^{\star} + \varepsilon$ with $\EE\varepsilon = 0.$  
Take any $\alpha \in \R^d$, expanding and using the fact that the noise is i.i.d, we derive 
\begin{align}\begin{split}
R_{T}^{(\alpha^{\star})}(\alpha) &= \frac{1}{|T|}  \sum_{(x, y) \in T} \EE\left( \dotp{\varphi(x),  \alpha} - \dotp{\varphi(x), \alpha^{\star}} + \varepsilon \right)^2 \\
&= \sigma^2+ \frac{1}{|T|}  \sum_{(x, y) \in T}\EE \left( \dotp{\varphi(x),  \alpha} - \dotp{\varphi(x), \alpha^{\star}}  \right)^2 \notag \\
&= R_{T}^{(\alpha^{\star})}(\alpha^{\star}) + \frac{1}{|T|}  \sum_{(x, y) \in T} \EE\left( \dotp{\varphi(x),  \alpha} - \dotp{\varphi(x), \alpha^{\star}}  \right)^2.
\end{split}
\end{align}
In particular, the risk $R_T$ is minimized at $\alpha^{\star}$. Then, the excess risk of $\widehat \alpha$ is equal to
\begin{align*}
R_{T}^{(\alpha^{\star})}(\widehat \alpha) - R_{T}^{(\alpha^{\star})}(\alpha^{\star}) &= \EE\,\|\widehat \alpha - \alpha^{\star}\|^2_{\Sigma_{T}} \\
&= \EE\,\left\|\left(\Phi(X)^\top \Phi(X)\right)^{-1}\Phi(X)^\top y -\alpha^{\star}\right\|^2_{\Sigma_{T}}\\
&= \EE\,\left\|\left(\Phi(X)^\top \Phi(X)\right)^{-1}\Phi(X)^\top ( y - \Phi(X) \alpha^{\star})\right\|^2_{\Sigma_{T}} \\
&= \EE\,\left\|\left(\Phi(X)^\top \Phi(X)\right)^{-1}\Phi(X)^\top\varepsilon\right\|^2_{\Sigma_{T}} \\
&= \EE\,\Tr\left(\varepsilon^\top\Phi(X)\left(\Phi(X)^\top \Phi(X)\right)^{-1}\Sigma_{T}\left(\Phi(X)^\top \Phi(X)\right)^{-1}\Phi(X)^\top\varepsilon\right)\\
&= \sigma^2 \Tr\left(\left(\Phi(X)^\top \Phi(X)\right)^{-1} \Sigma_{{T}}\right) \\
&= \frac{\sigma^2}{n}\Tr\left(\Sigma_{S}^{-1}\Sigma_{{T}}\right),
\end{align*}
where the second to last inequality uses the cyclic invariance of the trace.

\subsection{Proof of \cref{ex:instantiation-minimax-risk}}\label{app:proof-instantiation-minimax-risk} Let $\bar \Sigma _S = \EE \Sigma_S$ and $ \bar \Sigma_T  = \EE \Sigma_T$. We can upper bound 
\begin{align}\begin{split}\label{eq:break-apart-trace}
    \Tr\left(\Sigma_S^{-1} \Sigma_T\right) & \leq    \underbrace{\Tr\left(\bar \Sigma_S^{-1} \bar \Sigma_T\right)}_{Q_1} +\underbrace{\Tr\left( \left|\Sigma_S^{-1} - \bar \Sigma_S^{-1}\right| \left| \Sigma_T - \bar \Sigma_T\right| \right)}_{Q_2} \\ & \hspace{2cm} + \underbrace{\Tr\left(\left|\Sigma_S^{-1} - \bar \Sigma_S^{-1}\right| \bar \Sigma_T\right)}_{Q_3} + \underbrace{\Tr\left(\bar \Sigma_S^{-1}\left|\Sigma_T - \bar \Sigma_T\right|\right)}_{Q_4}
    \end{split}
\end{align}
where we use $|A|$ to denote the matrix with component-wise absolute values of $A.$ To bound each one of these terms we will use the following lemma. 

\begin{lemma}\label{lem:component-wise-bounds}
    Let $n = |S|$ and fix $\delta \in (0, 1/6)$, then with probability at least $1 - 4\exp(-\delta n)$ we have  
    $$
    \left|\Sigma_S^{-1} - \bar \Sigma_S^{-1}\right|_{11} \leq 2 \delta, \quad \left|\Sigma_S^{-1} - \bar \Sigma_S^{-1}\right|_{12} \leq 2  \sqrt{\frac{\delta}{d}}, \quad \text{and}\quad \left|\Sigma_S^{-1} - \bar \Sigma_S^{-1}\right|_{22} \leq 10 \frac{\delta}{d}.
    $$
    Similarly, let $m = |T|$ and fix $\delta \in (0, \infty),$ with probability at least $ 1- 4\exp(-\delta m)$ we have that 
    $$
      \left|\Sigma_T - \bar \Sigma_T\right|_{11} = 0, \quad \left|\Sigma_T - \bar \Sigma_T\right|_{12} \leq  \sqrt{{\delta}{\bar d}}, \quad \text{and}\quad \left|\Sigma_T - \bar \Sigma_T\right|_{22} \leq 4 {\delta}{\bar d}.
    $$
\end{lemma}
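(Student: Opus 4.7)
The plan is to exploit the fact that in the setting of Example~\ref{ex:instantiation-minimax-risk} both $\Sigma_S$ and $\Sigma_T$ are $2\times 2$ matrices whose entries are elementary sample averages of Gaussian and chi-squared variables. Concentration of these entries, combined with the explicit $2\times 2$ inverse formula for $\Sigma_S^{-1}$, will yield all six component-wise bounds after a union bound. I would proceed in three steps: (i) closed-form setup, (ii) concentration of four scalar quantities, (iii) substitution into entrywise expressions.

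\textbf{Setup.} Introduce $s_i := \mathbf{1}^\T x_i$ for the $i$-th training feature vector $x_i\in \R^d$, so that $\varphi(x_i) = (1, s_i)^\T$ and $s_i \sim N(0,d)$ i.i.d.; analogously $t_i \sim N(0, \bar d)$ i.i.d.\ on the test side, independent of the $s_i$. Writing $\bar s := \tfrac{1}{n}\sum_i s_i$, $\overline{s^2} := \tfrac{1}{n}\sum_i s_i^2$ (and similarly for $t$), the Gram matrices and the inverse take the explicit form
\begin{equation*}
\Sigma_S = \begin{pmatrix} 1 & \bar s \\ \bar s & \overline{s^2} \end{pmatrix}, \qquad
\Sigma_T = \begin{pmatrix} 1 & \bar t \\ \bar t & \overline{t^2} \end{pmatrix}, \qquad
\Sigma_S^{-1} = \frac{1}{\overline{s^2} - \bar s^{\,2}} \begin{pmatrix} \overline{s^2} & -\bar s \\ -\bar s & 1 \end{pmatrix}.
\end{equation*}

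\textbf{Concentration.} The main tools are the Gaussian tail bound applied to $\bar s \sim N(0, d/n)$ and $\bar t \sim N(0, \bar d/m)$, and the Laurent--Massart chi-squared deviation inequalities applied to $n\overline{s^2}/d \sim \chi^2_n$ and $m\overline{t^2}/\bar d \sim \chi^2_m$. A union bound over two-sided tails then produces events of probability at least $1 - 4\exp(-\delta n)$ and $1 - 4\exp(-\delta m)$ respectively, on which, up to absolute constants, $\bar s^{\,2} \lesssim \delta d$, $|\overline{s^2}/d - 1| \lesssim \sqrt{\delta}$, and analogously for $\bar t, \overline{t^2}$. The test-side bounds are then immediate: $(\Sigma_T - \bar\Sigma_T)_{11}$ is identically zero by construction, while $(\Sigma_T - \bar\Sigma_T)_{12} = \bar t$ and $(\Sigma_T - \bar\Sigma_T)_{22} = \overline{t^2} - \bar d$ are directly controlled by the above tail estimates.

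\textbf{Training side and main obstacle.} For the inverse I would start from the algebraic identities
\begin{equation*}
(\Sigma_S^{-1})_{11} - 1 = \frac{\bar s^{\,2}}{\overline{s^2} - \bar s^{\,2}}, \qquad
(\Sigma_S^{-1})_{12} = \frac{-\bar s}{\overline{s^2} - \bar s^{\,2}}, \qquad
(\Sigma_S^{-1})_{22} - \tfrac{1}{d} = \frac{d - \overline{s^2} + \bar s^{\,2}}{d\,(\overline{s^2} - \bar s^{\,2})},
\end{equation*}
and substitute the concentration estimates from the previous step into numerators and denominators. The delicate part, and the main obstacle I anticipate, is pinning down a uniform lower bound $\overline{s^2} - \bar s^{\,2} \gtrsim d$ on the good event: this requires combining the chi-squared lower tail on $\overline{s^2}$ (to force $\overline{s^2} \gtrsim d$) with the Gaussian upper tail on $\bar s$ (to force $\bar s^{\,2} \ll d$), and the hypothesis $\delta \in (0, 1/6)$ is precisely what keeps these two bounds compatible so that the denominator remains bounded away from zero. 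Once this is achieved the three training inequalities follow by direct substitution, and the remaining work is the routine bookkeeping of constants.
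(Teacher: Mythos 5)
Your proposal follows the paper's proof essentially verbatim: same reduction to the scalar summary statistics $\bar s$ and $\overline{s^2}$ (the paper's $\bar z_1, \bar z_2$), same explicit $2\times 2$ inverse formula, same pair of concentration inequalities (Gaussian tail for $\bar s$, Laurent--Massart for $\overline{s^2}$), same union bound over the four tails, and the same observation that $\delta<1/6$ is what keeps the denominator $\overline{s^2}-\bar s^2$ bounded away from zero. One small remark in your favor: you correctly state $\bar s\sim N(0,d/n)$, whereas the paper writes $\bar z_1\sim N(0,d/n^2)$, which appears to be a typo (the variance of an average of $n$ i.i.d.\ $N(0,d)$ variables is $d/n$).
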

Before proving this lemma, let us show how it proves the conclusion from~\Cref{ex:instantiation-minimax-risk}. Assume that the bounds in the Lemma hold with a fixed $\delta \in (0, 1/6)$. It is immediate that $Q_1 = 1 + \frac{\bar d}{d}.$ Furthermore,
\begin{align*}
    Q_2 &\leq 4 \delta \cdot \frac{\bar d}{d} + 40 \delta^2 \cdot \frac{\bar d} {d} \leq 44 \delta \cdot \left(1 + \frac{\bar d} {d}\right) \\ 
    Q_3 &\leq 2\delta + 10 \delta \frac{\bar d}{d} \leq 10 \delta \cdot \left(1+\frac{\bar d}{d}\right) \\
    Q_4 &\leq 4\delta \cdot \frac{\bar d}{d} \leq 4\delta \cdot \left((1 + \frac{\bar d}{d}\right)
\end{align*}
Combining all of these bounds yields 
$$
 \Tr\left(\Sigma_S^{-1} \Sigma_T\right) \leq (1 + 58\delta)\cdot \left(1 + \frac{\bar d}{d}\right),
$$
setting $\Delta = \delta/58$ proves the promised bound. We now proof Lemma~\ref{lem:component-wise-bounds}.
\begin{proof}[Proof of Lemma~\ref{lem:component-wise-bounds}]
 Let us start with the statement for $S$. Enumerate the samples in $S$ as $(x_1, y_1), \dots, (x_n, y_n).$ For each $x_i$, let $z_i = \sum_{j = 1}^d (x_i)_j$ be the sum of its coordinates. Let $\bar z_1 = \frac{1}{n} \sum_{i=1}^n z_i$ and $\bar z_2 = \frac{1}{n} \sum_{i=1}^n z_i^2.$ Then, we have
 \begin{equation}\label{eq:cov-S}
     \Sigma_S = \begin{pmatrix}
         1 & \bar z_1 \\
          \bar z_1 &  \bar z_2 
     \end{pmatrix}  \quad \text{and} \quad \Sigma_S^{-1} = \frac{1}{\bar z_2 - \bar z_1^2} \begin{pmatrix}
          \bar z_2  & - \bar z_1 \\
          - \bar z_1 &  1
     \end{pmatrix}.
 \end{equation}
 Thus, 
 \begin{equation*}
     \Sigma_S^{-1} - \bar \Sigma_S^{-1} = \begin{pmatrix}
          \frac{\bar z_2}{\bar z_2 - \bar z_1^2} - 1  & - \frac{\bar z_1}{\bar z_2 - \bar z_1^2} \\
          - \frac{\bar z_1}{\bar z_2 - \bar z_1^2}&  \frac{1}{\bar z_2 - \bar z_1^2} - \frac{1}{d}
     \end{pmatrix}.
 \end{equation*}
The random variables $z_i \sim N(0, d)$ and so $\bar z_1 \sim N(0, \frac{d}{n^2})$ and $\bar z_2 =  \frac{d}{n} w $ with $w \sim \chi^2_n$, a chi-squared distribution with $n$ degrees of freedom.
\begin{fact}[\cite{vershynin2018high}]
    Let $z \sim N(0, \sigma^2),$ then for any $t \geq 0$ we have $\PP(|z| \geq t) \leq 2 \exp\left(-\frac{t^2}{2\sigma^2}\right).$
\end{fact}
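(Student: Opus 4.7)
The plan is to derive this standard Gaussian tail bound via the Chernoff/Cramér-Chernoff method, which reduces the estimate to evaluating the moment generating function of a Gaussian and optimizing a single scalar parameter. First I will reduce to the standard normal case by the scaling observation: if $z \sim N(0,\sigma^2)$, then $z/\sigma \sim N(0,1)$, so $\PP(|z|\geq t) = \PP(|z/\sigma|\geq t/\sigma)$, and it suffices to prove $\PP(|Z|\geq s)\leq 2e^{-s^2/2}$ for $Z\sim N(0,1)$ and any $s\geq 0$, then substitute $s = t/\sigma$ at the end.

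Next I will exploit symmetry: since the standard Gaussian density is even, $\PP(|Z|\geq s) = 2\PP(Z\geq s)$, so the task reduces to a one-sided tail bound. To bound $\PP(Z\geq s)$, I would apply Markov's inequality to the exponential moment: for any $\lambda>0$,
\begin{equation*}
\PP(Z\geq s) = \PP\bigl(e^{\lambda Z}\geq e^{\lambda s}\bigr) \leq e^{-\lambda s}\,\E\bigl[e^{\lambda Z}\bigr] = e^{-\lambda s + \lambda^2/2},
\end{equation*}
where the last equality uses the well-known moment generating function of a standard normal, $\E[e^{\lambda Z}] = e^{\lambda^2/2}$ (which follows from completing the square inside the Gaussian integral).

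Finally, I will optimize over $\lambda > 0$. The exponent $-\lambda s + \lambda^2/2$ is minimized at $\lambda = s\geq 0$, yielding the minimum value $-s^2/2$. Substituting back gives $\PP(Z\geq s)\leq e^{-s^2/2}$, and combining with the symmetry step produces $\PP(|Z|\geq s)\leq 2 e^{-s^2/2}$. Unwinding the scaling with $s = t/\sigma$ yields the stated bound $\PP(|z|\geq t)\leq 2\exp(-t^2/(2\sigma^2))$. There is no real obstacle here: the proof is a textbook application of the Chernoff method, and the only computation requiring care is verifying the Gaussian MGF, which follows by completing the square in the defining integral.
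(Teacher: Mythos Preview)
Your proof is correct and is the standard Chernoff-bound derivation of the Gaussian tail inequality. The paper itself does not supply a proof of this fact; it simply cites it from \cite{vershynin2018high}, so there is nothing to compare against beyond noting that your argument is essentially the one found in that reference.
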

\begin{fact}[\cite{laurent2000adaptive}]
    Let $w \sim \chi_n^2$, then for any $t \geq 0$ we have that $$\max\left\{\PP( 2\sqrt{n t} + 2t \leq w - n), \PP( w - n \leq - 2\sqrt{n t})\right\} \leq \exp(-t).$$
\end{fact}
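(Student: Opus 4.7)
The plan is to establish both tail bounds via the classical Chernoff method applied to the chi-squared moment generating function. Writing $w=\sum_{i=1}^n X_i^2$ with $X_i\sim N(0,1)$ independent, a direct computation gives $\EE[e^{\lambda w}]=(1-2\lambda)^{-n/2}$ for $\lambda<1/2$ and $\EE[e^{-\lambda w}]=(1+2\lambda)^{-n/2}$ for $\lambda>0$.

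For the upper tail, Markov's inequality yields, for any $a>0$ and $\lambda\in(0,1/2)$,
\[
\PP(w-n\geq a)\leq e^{-\lambda(n+a)}(1-2\lambda)^{-n/2}.
\]
I would then optimize the right-hand side over $\lambda$; the first-order condition gives the minimizer $\lambda^{*}=a/(2(n+a))$, and after substitution the bound becomes $\exp\bigl(-\tfrac{a}{2}+\tfrac{n}{2}\log(1+\tfrac{a}{n})\bigr)$. Plugging in $a=2\sqrt{nt}+2t$ and writing $u=\sqrt{t/n}$ so that $a/n=2u+2u^2$ and $t=nu^2$, the desired inequality $\PP(w-n\geq a)\leq e^{-t}$ reduces after simplification to the elementary estimate $e^{2u}\geq 1+2u+2u^2$, which is immediate from the Taylor expansion $e^{2u}=1+2u+2u^2+\tfrac{4u^3}{3}+\cdots$.

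For the lower tail, an analogous Chernoff argument produces, for $0<b<n$,
\[
\PP(n-w\geq b)\leq\inf_{\lambda>0}e^{\lambda(n-b)}(1+2\lambda)^{-n/2}=\exp\!\bigl(\tfrac{b}{2}+\tfrac{n}{2}\log(1-\tfrac{b}{n})\bigr),
\]
with minimizer $\lambda^{*}=b/(2(n-b))$. Substituting $b=2\sqrt{nt}$ and $u=\sqrt{t/n}\in[0,\tfrac12)$, the claim reduces to $(1-2u)\,e^{2u+2u^2}\leq 1$. Setting $g(u):=(1-2u)e^{2u+2u^2}$, I would verify this by computing $g(0)=1$ together with
\[
g'(u)=e^{2u+2u^2}\bigl[-2+(1-2u)(2+4u)\bigr]=-8u^2\,e^{2u+2u^2}\leq 0,
\]
so $g$ is non-increasing on $[0,\tfrac12)$ and the inequality follows. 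When $t>n/4$ the event $\{w-n\leq -2\sqrt{nt}\}$ is empty because $w\geq 0$, so the bound is vacuous in that range.

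The main obstacle is not conceptual but one of bookkeeping: the Chernoff exponent must be optimized cleanly, and the resulting log-inequalities verified with the exact constants $2\sqrt{nt}+2t$ and $2\sqrt{nt}$ so that the asymmetry between the two tails --- namely, the absence of the $2t$ correction on the lower tail, which reflects the lighter left tail of $\chi^2_n$ --- drops out automatically. The algebraic fact that makes this work is that $g'(u)$ factors as a clean multiple of $u^2$, which is precisely what is needed to match the sharp constant $2\sqrt{nt}$ (rather than $2\sqrt{nt}+2t$) on the left.
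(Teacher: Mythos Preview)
Your argument is correct and is essentially the standard Chernoff-method derivation (this is the original Laurent--Massart proof). The paper itself does not prove this statement: it is quoted as a Fact with a citation to \cite{laurent2000adaptive} and used as a black box inside the proof of Lemma~\ref{lem:component-wise-bounds}, so there is nothing in the paper to compare against beyond the citation.
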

Equipped with these two facts, it is easy to derive that for any fixed $\delta > 0,$  we have $\PP(|\bar z_1| \geq  \sqrt{\delta d}) \leq 2 \exp\left(- {\delta n^2}\right)$ and $\max\left\{\PP(2(\delta+\delta^2)d \leq \bar z_2 -d), \PP (\bar z_2 - d \leq -2\delta d) \right\}\leq \exp(-\delta n).$ Consider the event $$
\mathcal{E} = \left\{|\bar z_1 | \leq \sqrt{\delta d} \quad \text{ and } \quad (1 - 2\delta) d \leq \bar z_2 \leq ( 1+4 \delta) d \right\}.
$$
A union bound argument yields $\PP(\cE) \geq 1 - 4\exp\left(-\delta n \right),$ which matches the stated probability. For the rest of the proof assume that $\cE$ holds with $\delta \in (0, 1/6)$.

 We are finally ready to prove the three stated bounds for $S$. We will repeatedly use that $\delta/(1-3\delta) \leq 2 \delta$ for $\delta < 1/6.$ Let us start with the first statement:
    \begin{align*}
       \left| \frac{\bar z_2}{\bar z_2 - \bar z_1^2} -1\right|= \left| \frac{\bar z_1^2}{\bar z_2 - \bar z_1^2}\right| \leq \frac{|\bar z_1^2|}{|\bar z_2| - |\bar z_1^2|} \leq \frac{\delta }{(1-3\delta)} \leq 2 \delta.
    \end{align*}
 Similarly,
    \begin{align*}
       \left| \frac{\bar z_1}{\bar z_2 - \bar z_1^2}\right|= \left| \frac{\bar z_1}{\bar z_2 - \bar z_1^2}\right| \leq \frac{|\bar z_1|}{|\bar z_2| - |\bar z_1^2|} \leq \frac{\sqrt{\delta d}}{(1-3\delta) d} \leq 2\sqrt{\frac{\delta}{d}}
    \end{align*}
    where the last inequality follows for any $\delta < 1/6$ and $d \geq 1.$ Finally, 
\begin{align*}
    \left|\frac{1}{\bar z_2 - \bar z_1^2} - \frac{1}{d} \right| = \left| \frac{d - \bar z_2 + \bar z_1^2}{(\bar z_2 - \bar z_1^2) d}\right| \leq \frac{|d - \bar z_2| + \bar z_1^2}{(|\bar z_2| - |\bar z_1^2|) d } \leq \frac{5 \delta }{(1-3\delta)d} \leq 10 \frac{\delta}{d}.
\end{align*}
This completes the statement involving $S$. 

To show the statement for $T$ we use a very similar strategy. Notice that $\Sigma_T$ can also be written as in \eqref{eq:cov-S} for a $\bar z_1$ and $\bar z_2$ with an analogous distribution where we substitute $d$ with $\bar d.$ Then, assuming again that we are in $\cE$---defined with the the new $\bar z_1$ and $\bar z_2$ and with $\bar d$ in place of $d$---yields the stated probability bound. Hence, $\left(\Sigma_T - \bar \Sigma_T \right)_{11} = 0 $, while 
\begin{align*}
\left|\left(\Sigma_T - \bar \Sigma_T \right)_{12}\right| = |z_1| \leq \sqrt{\delta d}, \quad \text{and} \quad \left|\left(\Sigma_T - \bar \Sigma_T \right)_{22}\right| = |\bar z_2 - d| \leq 4 \delta d,
\end{align*}
which completes the proof.
\end{proof}

\subsection{Proof of Theorem~\ref{thm:loweround}}
    The proof follows along similar lines as the argument in \cite{mourtada2022exact}, also summarized in \cite[Section 3.7]{bach2024learning}.
    Define $n = |S|$. We have already proved an upper bound that matches \eqref{eq:minimax}, so we focus on proving a lower bound. %
    We can lower bound the supremum using a prior on $\alpha \sim N\left(0, \tfrac{\sigma^2}{\lambda n} \cdot I\right)$ where $\lambda >0$ is any fixed positive scalar. Let $Y$ denote the random vector of new observations $Y_i = \dotp{\varphi(x_i), \alpha} + \epsilon_i$ with $x_i\in S$. In particular,
\begin{align}
\begin{split}
    \label{eq:lower-bayes}
        \inf_{\widehat \alpha} \mathop{\sup}_{\alpha}\left\{{R}_T^{(\alpha)}(\widehat \alpha(Y)) - {R}_T^{(\alpha)}( \alpha) \right\} & = \inf_{\widehat \alpha} \mathop{\sup}_{\alpha} \EE_Y \left\| \widehat \alpha - \alpha \right\|_{\Sigma_{T}}^2\\
    & \geq \inf_{\widehat \alpha} \EE_{\alpha} \EE_Y \left[ \left.  \left\| \widehat \alpha(Y) - \alpha \right\|_{\Sigma_{T}}^2 \, \right| \, \alpha \right]\\
   & = \inf_{\widehat \alpha} \EE_{Y} \EE_\alpha \left[\left.  \left\| \widehat \alpha(Y) - \alpha \right\|_{\Sigma_{T}}^2 \right| Y\right] \\
   & \geq \EE_{Y} \inf_{\widehat \alpha} \E_{\alpha} \left[\left.  \left\| \widehat \alpha(Y) - \alpha \right\|_{\Sigma_{T}}^2\right| Y\right],
\end{split}
\end{align}
where the second line uses the prior, and the fourth line uses Jensen's inequality. We can recognize a minimizer for the inner infimum via first-order optimality conditions as the conditional mean
$
\widehat \alpha_{\star} = \EE \left[\alpha \mid Y \right].
$
In order to explicitly compute $\widehat \alpha_{\star}$, observe that
$(\alpha,Y)=(\alpha, \Phi(X)\alpha + \epsilon)$ is a jointly Gaussian vector:
$$ (\alpha, Y)\sim N\left(0, \frac{\sigma^2}{\lambda n}\begin{bmatrix}  I &  \Phi(X)^\top \\
\Phi(X) &  \Phi(X)\Phi(X)^\top +  \lambda n \cdot I\end{bmatrix}\right).$$
Next, we will use the following two standard lemmas, whose proofs are elementary.
\begin{lemma}\label{lem:cond-mean}
    Let $(X,Y) \sim N\left(0, \begin{bmatrix} \Sigma_{xx} & \Sigma{xy} \\ \Sigma_{yx} & \Sigma_{yy}\end{bmatrix}\right),$ then, $\EE\left(X \mid Y\right) = \Sigma_{xy}\Sigma_{yy}^{-1}Y$.
\end{lemma}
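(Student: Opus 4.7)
The plan is to reduce the computation of $\EE[X \mid Y]$ to the well-known characterization that for jointly Gaussian vectors, independence is equivalent to zero covariance. Concretely, I will look for a linear map $A$ and a residual $Z$ such that $X = AY + Z$ with $Z$ jointly Gaussian with $Y$ and independent of $Y$. Once that decomposition is in hand, $\EE[X \mid Y] = AY + \EE[Z \mid Y] = AY + \EE[Z] = AY$ since $Z$ is independent of $Y$ with mean zero, so the only task is to identify $A$.

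To determine $A$, I would set $Z := X - AY$ and compute
\[
\operatorname{Cov}(Z,Y) = \operatorname{Cov}(X,Y) - A\operatorname{Cov}(Y,Y) = \Sigma_{xy} - A\Sigma_{yy}.
\]
Choosing $A = \Sigma_{xy}\Sigma_{yy}^{-1}$ makes $\operatorname{Cov}(Z,Y) = 0$. Since $(Z,Y)$ is a linear transformation of the jointly Gaussian $(X,Y)$, it is itself jointly Gaussian, and for jointly Gaussian vectors zero cross-covariance yields independence. Here I tacitly assume $\Sigma_{yy}$ is invertible; if it is not, I would replace the inverse by the Moore--Penrose pseudoinverse and note that the argument still goes through on the range of $\Sigma_{yy}$, which almost surely contains $Y$.

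Combining these two steps gives $\EE[X \mid Y] = \Sigma_{xy}\Sigma_{yy}^{-1} Y$, as claimed. The main substantive ingredient is the Gaussian fact that uncorrelated jointly Gaussian vectors are independent; everything else is algebra. I would present the proof in two short paragraphs: first the ``guess and verify'' construction of $Z$, then the conditional-expectation computation exploiting independence and zero mean.
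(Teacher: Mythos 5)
Your proof is correct and is the standard decorrelation argument; the paper states this lemma without proof, calling it elementary, so there is no written proof in the paper to compare against, but your construction of $Z = X - \Sigma_{xy}\Sigma_{yy}^{-1}Y$ and the use of the Gaussian zero-covariance-implies-independence fact is exactly the canonical route one would take. Your remark about substituting the Moore--Penrose pseudoinverse when $\Sigma_{yy}$ is singular is a harmless extra, though note that in the paper's application $\Sigma_{yy} = \tfrac{\sigma^2}{\lambda n}\left(\Phi(X)\Phi(X)^\top + \lambda n I\right)$ is strictly positive definite, so invertibility holds automatically.
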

\begin{lemma}\label{lem:funny-commutativity}
    Let $Q \in \R^{d \times r}$ be a rank $r$ matrix, then, $Q^\top \left(QQ^\top - I_d\right)^{-1} = \left(Q^\top Q - I_r\right)^{-1} Q^\top.$
\end{lemma}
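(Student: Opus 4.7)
The plan is to reduce the identity to a trivial polynomial equality in $Q$ by clearing denominators. Both sides of the claimed equation involve inverses, so my first step is to verify that both $QQ^\top - I_d$ and $Q^\top Q - I_r$ are genuinely invertible under the same condition, so that multiplying by them on either side is legitimate. Since $Q$ has rank $r$, its singular values $\sigma_1,\dots,\sigma_r$ are all nonzero; the nonzero eigenvalues of $Q^\top Q$ and $QQ^\top$ coincide (both equal to $\sigma_i^2$), while $QQ^\top$ has an additional eigenvalue $0$ of multiplicity $d-r$. Subtracting the identity shifts these eigenvalues by $-1$, so both $Q^\top Q - I_r$ and $QQ^\top - I_d$ are invertible precisely when no singular value of $Q$ equals $1$. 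This is the implicit regularity hypothesis under which the statement is to be read.

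Given invertibility, I would then multiply the desired identity on the left by $(Q^\top Q - I_r)$ and on the right by $(QQ^\top - I_d)$. This reduces the claim to the single algebraic equation
\begin{equation*}
(Q^\top Q - I_r)\, Q^\top \;=\; Q^\top\, (QQ^\top - I_d).
\end{equation*}
Expanding each side using $I_r Q^\top = Q^\top = Q^\top I_d$ gives $Q^\top Q Q^\top - Q^\top$ on both the left and the right, so the identity holds trivially by associativity of matrix multiplication.

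The main (minor) obstacle is purely bookkeeping: being careful that the two inverse-multiplications are valid, which is exactly the observation in the first paragraph. Since both matrices are invertible under the same spectral condition on $Q$, no additional hypothesis is needed beyond what is stated, and the proof is complete. In fact this is a standard ``push-through'' identity for rectangular matrices, of the same flavor as $Q(I_r + Q^\top Q)^{-1} = (I_d + QQ^\top)^{-1} Q$, and the argument above is the textbook derivation.
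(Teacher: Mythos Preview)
Your argument is correct and is exactly the standard ``push-through'' derivation; the paper itself does not spell out a proof, simply declaring the lemma elementary, so your approach is in line with what the authors intend. Your observation that invertibility of both $QQ^\top - I_d$ and $Q^\top Q - I_r$ requires no singular value of $Q$ to equal $1$ is a useful clarification that the paper leaves implicit.
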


Thus using Lemma~\ref{lem:cond-mean}, we compute
\begin{align*}
\widehat \alpha_{\star} &=  \frac{\sigma^2}{\lambda n} \Phi(X)^\top \left(\frac{\sigma^2}{\lambda n} \Phi(X) \Phi(X)^\top +  \sigma^2 I\right)^{-1}  Y \\
&= \frac{\sigma^2}{\lambda n} \left( \frac{\sigma^2}{\lambda n}\Phi(X)^\top \Phi(X) +  \sigma^2 I\right)^{-1} \Phi(X)^\top Y \\
&= \left( \Phi(X)^\top \Phi(X) +  \lambda n I\right)^{-1} \Phi(X)^\top Y 
\end{align*}
where the second line follows from Lemma~\ref{lem:funny-commutativity}. Substituting this expression into \eqref{eq:lower-bayes} yields
\begin{align*}
     &\inf_{\widehat \alpha} \mathop{\sup}_{\alpha}\left\{{R}_T^{(\alpha)}(\widehat \alpha) - {R}_T^{(\alpha)}( \alpha) \right\} \\ 
     &\hspace{1cm}\geq \EE_{Y} \EE_{\alpha\mid Y} \left\|  \left( \Phi(X)^\top\Phi(X) +  \lambda n I\right)^{-1} \Phi(X)^\top Y   - \alpha \right\|^2_{\Sigma_{T}} \\
    &\hspace{1cm}= \EE_{\alpha, \varepsilon} \left\|  \left( \Phi(X)^\top\Phi(X) +  \lambda n I\right)^{-1} \Phi(X)^\top\left(  \Phi(X) \alpha + \varepsilon \right) - \alpha \right\|^2_{\Sigma_{T}} \\
        &\hspace{1cm}= \EE_{\alpha, \varepsilon} \left\|  \left( \Phi(X)^\top\Phi(X) +  \lambda n I\right)^{-1} \Phi(X)^\top\left(  \Phi(X) \alpha - \left(\Phi(X) +  \lambda n I\right)\alpha -\varepsilon \right) - \alpha \right\|^2_{\Sigma_{T}} \\
        &\hspace{1cm}= \EE_{\alpha, \varepsilon} \left\|  \left( \Phi(X)^\top\Phi(X) +  \lambda n I\right)^{-1} \Phi(X)^\top\left(\varepsilon - n\lambda \alpha\right)  \right\|^2_{\Sigma_{T}} \\
     & \hspace{1cm}= \EE_{\alpha} \left\|  \lambda\cdot \left(\Sigma_{S} + \lambda\cdot I\right)^{-1} \alpha \right\|^2_{\Sigma_{T}} + \EE_{\varepsilon} \left\|  \left(\Sigma_{S} + \lambda I\right)^{-1} \Phi\left(X\right)^\top \varepsilon  \right\|^2_{\Sigma_{T}} \\
     & \hspace{1cm}= \frac{\lambda \sigma^2}{n} \Tr  \left(\left(\Sigma_{S} + \lambda I\right)^{-2} \Sigma_{T}\right) + \frac{\sigma^2}{n} \Tr\left(\left(\Sigma_S + \lambda I\right)^{-2} \Sigma_S \Sigma_{T}\right) \\
       & \hspace{1cm}= \frac{\sigma^2}{n} \Tr  \left(\left(\Sigma_{S} + \lambda I\right)^{-2} \left(\Sigma_{S}+\lambda I\right)\Sigma_{T}\right) \\
         & \hspace{1cm}=  \frac{\sigma^2}{n} \Tr  \left(\left(\Sigma_S + \lambda I\right)^{-1} \Sigma_{T}\right).
\end{align*}
Taking $\lambda \rightarrow 0$ yields a lower bound of $\frac{\sigma^2}{n} \Tr  \left(\left(\Sigma_S\right)^{-1} \Sigma_{T}\right)$, establishing the result. %

\end{document}